\theoremstyle{plain}
\newtheorem{thm}{Theorem}[section]
\newtheorem{thma}{Theorem}
\theoremstyle{plain}
\newtheorem{lem}[thm]{Lemma}
\newtheorem{prop}[thm]{Proposition}
\theoremstyle{definition}
\newtheorem{defi}{Definition}[section]
\newtheorem{rem}{Remark}
\newcommand{\be} {\begin{equation}}
\newcommand{\ee} {\end{equation}}
\newcommand{\R} {\mathbb{R}}
\newcommand{\brho} {\bar{\rho}}
\newcommand{\bu} {\bar{u}}
\newcommand{\rt}{\mathbb{R}^{2}}
\newcommand{\authorfootnotes}{\renewcommand\thefootnote{\@fnsymbol\c@footnote}}%
\numberwithin{equation}{section} \allowdisplaybreaks
\begin{document}
\title{Self-similar solutions of decaying Keller-Segel systems for several populations}
\author{Debabrata Karmakar and Gershon Wolansky}
\thanks{Technion, Israel Institute of Technology, 32000 Haifa, Israel; \\
e-mail: dkarmaker@technion.ac.il, gershonw@math.technion.ac.il }

\begin{abstract}
	It is known that solutions of the parabolic elliptic Keller-Segel equations in the  two dimensional plane
	decay, as time goes to infinity, provided the initial data admits sub-critical  mass and finite second moments, 
	while such solution concentrate, as $t\rightarrow\infty$, in the critical mass.  In the sub-critical case this 
	decay can be resolved by a steady, self-similar solution,
	while no such self similar solution is known to exist for the concentration in the critical case.

This paper is motivated by the Keller-Segel system of several interacting populations, under the existence of an additional 
drift for each component which decays in time at the rate  $O(1/\sqrt{t})$.  We show that self-similar solutions always
exists   in the sub-critical case, while the existence of such self-similar solution in the critical case depends on the 
gap between the decaying drifts for each of the components. For this, we study the conditions for existence/non existence 
of solutions for the corresponding Liouville's systems, which, in turn, is related to the existence/non existence  
of minimizers to a corresponding Free Energy functional. 
\end{abstract}

\maketitle
\noindent
\textbf{Mathematics Subject Classification (2010):} Primary 35J60, 35J20; Secondary 35Q92.

\noindent
\textbf{Keywords:} Liouville systems with potential; Moser-Trudinger for systems; Blowup analysis; Existence of minimizers.

\section{Introduction}  
The Keller-Segel system represents the   evolution of living cells under self-attraction and diffusive forces \cite{KS}, 
\cite{Pa}. Its general form is given by
\be\label{KS1}\frac{\partial\rho}{\partial t} = \Delta \rho-\nabla\cdot\rho\left(a \nabla_xu- \vec{V}\right) \ ; \ \ 
(x,t)\in \rt\times \mathbb{R}_+\ee
where $a>0$, $\vec{V}=\vec{V}(x,t)$ is a  given vector-field, $\rho=\rho(x,t)$ stands for the distribution of living cells 
and $u=u(x,t)$ the concentration of the chemical substance attracting the cells. In the parabolic/elliptic limit this 
concentration is given by the Newtonian potential
\be\label{KS2}u(x,t):=- \frac{1}{2\pi}\int_{\R^2}\rho(y,t)\ln|x-y|d^2y \ , \ \ i.e\ \  -\Delta u=\rho \ .  \ee 
Since \eqref{KS1} is a parabolic equation of divergence type it follows that the {\em total population number} 
$\int\rho d^2x:=\beta>0$ is conserved in time under suitable boundary conditions at infinity. If $\vec{V}\equiv 0$ then the steady 
states of (\ref{KS1},\ref{KS2}) takes the form of {\em Liouville's Equation}
\be\label{lio}\Delta u(x) + \frac{\beta e^{au(x)}}{\int_{\rt} e^{au(z)}d^2z}=0 \ .  \ee
The spacial dimension 2 which we discuss here was studied by many authors in the case 
$V\equiv 0$ \cite{BD, BT,BG6,BMC}. The two dimensional case is special in the sense that there is a 
critical mass $\beta_c=8\pi/a$. If $\beta<\beta_c$ then, under some natural assumptions on the initial 
data $\rho(x,0):=\rho_0$, the solutions exists globally in time and, moreover, $\lim_{t\rightarrow\infty}\rho(x,t)=0$ 
locally uniformly on $\rt$ \cite{BD}. In particular, there is no solution of \eqref{lio}. If $\beta>\beta_c$ then there is no global 
in time solution of (\ref{KS1}, \ref{KS2}) \cite{HV}  and, again, no solution of \eqref{lio} exists. In the case $\beta=\beta_c$ there 
is a family of solutions of \eqref{lio} and the (free-energy) solutions of (\ref{KS1}, \ref{KS2}) exist globally in time. Moreover,
if the initial data has finite second moment then any such 
solution converges asymptotically to the Dirac measure $\beta_c\delta_0$ \cite{BMC}, otherwise, any radial solution
to (\ref{KS1}, \ref{KS2}) converges asymptotically to
one of the solutions of \eqref{lio} \cite{BG6}.
\par
In the sub-critical case $\beta\leq \beta_c$ it is natural to ask whether there exists self similar solutions of
(\ref{KS1},\ref{KS2}) of the form 
\be\label{rodef} \rho(x,t):= (2t)^{-1}\brho\left(\frac{x}{\sqrt{2t}}, \frac{1}{2}\ln 2t\right)\ , \ \ 
u(x,t)=\bu\left(\frac{x}{\sqrt{2t}},
\frac{1}{2}\ln 2t\right) \ .  \ee
where $t>0$. 
 
It follows that
$$ \bu(y,t)=- \frac{1}{2\pi}\int_{\R^2}\brho(x,t)\ln|x-y|d^2x -\frac{\beta}{2\pi}t$$
in particular $\nabla_x u(x,t)=(2t)^{-1/2}\nabla_y \bu(x/\sqrt{2t}, \frac{1}{2}\ln 2t)$. Substituting in the KS equation we get under
the change of variables $x\rightarrow \frac{x}{\sqrt{2t}}$, $t\rightarrow \frac{1}{2}\ln 2t$,
\be\label{mKS}\partial_t\brho = \Delta\brho- \nabla\cdot \brho\left(a\nabla\bu-x \right)  , \ee
which corresponds to \eqref{KS1} under $\vec{V}(x,t)=x$. 
The corresponding steady state of \eqref{mKS} is 
\be\label{liom}
\Delta_x \bu + \frac{\beta e^{a\bu-|x|^2/2}}{\int_{\rt} e^{a\bu(z)-|z|^2/2}d^2z}=0\ee
The existence and uniqueness (up to a constant) of the solutions to \eqref{liom} in the  sub-critical case $\beta<\beta_c$ was 
given in \cite{CLM,CK94}. 
In \cite{BDP} the authors considered the existence of such self-similar solution of \eqref{rodef} for sub-critical  data. 
Non existence  of solutions of  \eqref{liom} in the critical case was also proved in \cite{CK94}.  
 \par
In this paper we consider \eqref{KS1} with a non-zero, but decaying in time vector field. In particular we assume 
\be\label{Vdef} \vec{V}(x,t)=-v(2t)^{-1/2}\ee
 where $v\in\rt$ is a constant vector. Then  we get under the scaling \eqref{rodef} the following 
modification of \eqref{mKS}:
\be\label{mKSv}\partial_t \brho = \Delta\brho-\nabla\cdot \brho\left(\beta\nabla\bu-(x-v) \right)  \ , \ee
and the modified Liouville's equation
%\be\label{mliom}
%\Delta_y \bu + \frac{\beta e^{a\bu-|y-v_i|^2/2}}{\int %e^{a\bu(z)-|z|^2/2}d^2z}=0\ee
\be\label{liomv}
\Delta_x \bu + \frac{\beta e^{a\bu-|x-v|^2/2}}{\int_{\rt} e^{a\bu(z)-|z-v|^2/2}d^2z}=0 \ . \ee
Evidently, any solution of \eqref{liom} is transformed into a solution of \eqref{liomv} by a shift $x\rightarrow x+v$ and 
v.v. In particular, the self similar solutions of (\ref{KS1}, \ref{KS2}) in the case $\vec{V}=0$ is translated to the 
case of $\vec{V}=-v(2t)^{-1/2}$ by this shift. Thus the non-existence of  global, self-similar solutions of the form 
\eqref{rodef} in the case of critical mass $\beta=\beta_c$  \cite{NS04} is obtained under (\ref{Vdef}) as well.
\par
 In this paper we are motivated by  a generalization of (\ref{KS1}, \ref{KS2}) to the case of a system of $n$ populations
\be\label{KS1n}\frac{\partial\rho_i}{\partial t} = \Delta \rho_i-\nabla\cdot\rho_i\left(\sum_{j=1}^na_{ij} 
\nabla_xu_j- \vec{V}_i\right) \ ; \ \ (x,t)\in \rt\times \mathbb{R}_+\ee
where $A:=(a_{ij})_{n\times n}$ is a symmetric and nonnegative (i.e., $a_{ij}\geq 0$ for all $i,j$) matrix and
\be\label{KS2n}u_i(x,t):=- \frac{1}{2\pi}\int_{\R^2}\rho_i(y,t)\ln|x-y|d^2y \ .  \ee 
\par 
In the case $\vec{V}_i=0$ the stationary solution of such systems, subjected to the initial data satisfying 
$\int\rho_i(x,0)d^2x=\beta_i$ solves the {\em Liouville's systems:}
\be\label{lios} \Delta u_i+ \frac{\beta_ie^{\sum_j a_{ij}u_j}}{\int_{\rt}e^{\sum_j a_{ij}u_j(z)}d^2z}=0 \ . \ee
Again, such Liouville's systems have been studied intensively in \cite{CSW, SW, Lin}, and the cases where
$a_{ij}$ are not necessarily nonnegative (in connection with the chemotactic system known as the conflict
case) have also been explored in \cite{H,W2}.

The solvability of such systems was considered in  \cite{CSW,SW} and \cite{W1}. The criticality condition 
is determined, in that case,  by the functions 
\begin{align*}
\Lambda_J(\bm{\beta}) = \sum_{i \in J}\beta_i \left(8\pi - \sum_{j \in J}a_{ij}\beta_j\right).
\end{align*}
where $\phi \neq J\subseteq I:=\{1, \ldots n\}$. The criticality condition $\beta_c=8\pi/a$ in the case of single composition is 
replaced by 
$$ \Lambda_I(\bm{\beta})=0\ . $$
In particular 
 it was proved in \cite{CSW}  that an entire solution of \eqref{lios} exists {\em only} in the critical case  iff, in addition, 
 $\Lambda_J(\bm{\beta}) >0$ for all $\phi \neq J \subsetneq I$ hold.
 
 In this paper we consider the implementation of 
\be\label{vi}\vec{V}_i=-\frac{1}{\sqrt{2}}t^{-1/2}v_i\ee
in (\ref{KS1n}, \ref{KS2n}), where $v_i\in\rt$ are (perhaps different) constant vectors.  Under the scaling 
\eqref{rodef} we recover  the   modified KS system
\be\label{MKS1n}\frac{\partial\bar\rho_i}{\partial t} = \Delta \bar\rho_i-\nabla\cdot\bar\rho_i\left(\sum_{j=1}^na_{ij} 
\nabla_x\bar u_j- (x-v_i)\right) \ ; \ \ (x,t)\in \rt\times \mathbb{R}_+\ee
where 
\be\label{MKS2n}\bar{u}_i(x,t):=- \frac{1}{2\pi}\int_{\R^2}\bar \rho_i(y,t)\ln|x-y|d^2y -\frac{\beta_i}{2\pi}t \ .  \ee  
The steady states of (\ref{MKS1n}, \ref{MKS2n}) are given by the 
modified  Liouville's system  
\be\label{liosm} \Delta_x \bar{u}_i+ 
\frac{\beta_ie^{\sum_j a_{ij}\bar{u}_j-|x-v_i|^2/2}}{\int_{\rt}e^{\sum_j a_{ij}\bar{u}_j(z)-|z-v_i|^2/2}d^2z}=0. \  \ee
The existence of entire solutions to \eqref{liosm} is, thus, directly related to the existence of self-similar solutions
of the form \eqref{rodef} for (\ref{KS1n}, \ref{KS2n}) under \eqref{vi}. 

The modified KS system (\ref{MKS1n}, \ref{MKS2n}) and the modified  Liouville's system  \eqref{liosm} are closely related
to the {\em Free energy functional}

\begin{multline}\label{Fv}
\mathcal{F}_{\bm{v}}(\bm{\brho}) :=
\sum_{i=1}^n  \int_{\rt} \brho_i(x) \ln \brho_i(x)d^2x + \frac{1}{4\pi}
\sum_{i=1}^n \sum_{j=1}^n a_{ij} \int_{\rt} \int_{\rt} \brho_i(x)\ln|x-y|\brho_j(y)d^2xd^2y \\
+\sum_{i=1}^n \frac{1}{2} \int_{\rt} |x-v_i|^2 \brho_i(x)d^2x,
\end{multline}
defined over the set
\begin{align*}
\Gamma^{\bm{\beta}} := \left\{\bm{\brho} = (\brho_1, \cdots, \brho_n) | \ \brho_i \geq 0,
\int_{\mathbb{R}^2} \brho_i \ln \brho_i <\infty,
\int_{\mathbb{R}^2} |x|^2 \brho_i < \infty, \int_{\mathbb{R}^2} \brho_i = \beta_i, \forall i\right\}.
\end{align*}
Indeed, we observe formally that (\ref{MKS1n}, \ref{MKS2n}) can be written as  a gradient descend system in the Wasserstein 
sense \cite{AGS}
\be\label{Was} \frac{\partial\brho_i}{\partial t} 
= \nabla\cdot\left(\brho_i\nabla\left(\frac{\delta\mathcal{F}_{\bm{v}}}{\delta\brho_i}\right)\right) \ , i=1,\ldots, n, \ee
and, in particular
\be \frac{d}{dt} \mathcal{F}_{\bm{v}}(\bm{\brho})
=-\sum_i\int_{\rt} \rho_i\left| \nabla \frac{\delta\mathcal{F}_{\bm{v}}}{\delta\brho_i}\right|^2.\ee

Every critical point of  $\mathcal{F}_{\bm{v}}$ on $ \Gamma^{\bm{\beta}}$ 
induces a solution  of 
\eqref{liosm}  \cite{CSW}, \cite{Su}.  In particular, any minimizer is such a solution. Moreover, we expect such 
minimizers to be a stable stationary  solutions of (\ref{MKS1n}, \ref{MKS2n}) and thus to represent stable self 
similar limit of (\ref{KS1n}, \ref{KS2n}) under \eqref{vi}. 

Let
\be\label{var}Var(v_1, \ldots, v_n):=\min_{x\in\rt} \sum_{i\in I} |v_i-x|^2. \  \ee
Unless otherwise stated, in this article we assume the matrix $A=(a_{ij})_{n \times n}$ satisfies
\begin{align*}
 (H) \ \ \ \ \ A \ is \ symmetric \ and \ nonnegative,
\end{align*}
and $\bm{\beta}$ satisfies
 \begin{align} \label{beta condition}
 \begin{cases}
 \Lambda_J(\bm{\beta}) \geq 0, \ \mbox{for all} \ \emptyset \neq J \subseteq I, \\
 \mbox{if, \  for some $J\not=\emptyset$}  \ , \ \Lambda_J(\bm{\beta}) = 0, \ \mbox{then} \ a_{ii} +
 \Lambda_{J\backslash \{i\}} > 0,
 \forall i \in J.
 \end{cases}
 \end{align}

The main result of this article is: 
\begin{thm}\label{main}
Suppose $A$ satisfies $(H)$ and $\bm{\beta}$ satisfies
\eqref{beta condition}.
Then 
%\begin{thm}\label{main1} . 
\begin{itemize}
\item[(a)] $\mathcal{F}_{\bm{v}}$ is bounded from below on $\Gamma^{\bm{\beta}}.$
\item[(b)] If $\Lambda_J(\bm{\beta}) > 0$ for all $\emptyset \neq J \subseteq I,$ then there exists a minimizer
of $\mathcal{F}_{\bm{v}}$ on $\Gamma^{\bm{\beta}},$ for all $(v_1,\cdots,v_n) \in (\rt)^n.$
\item[(c)] If $\Lambda_I(\bm{\beta}) = 0$ and $Var(v_1, \ldots, v_n)=0$ then there is no minimizer of $\mathcal{F}_{\bm{v}}$ in $\Gamma^{\bm{\beta}}$.
\item[(d)] If $n=2$ and $\Lambda_{\{1,2\}}(\bm{\beta})=0$,  $\Lambda_{\{1\}}(\bm{\beta}),
\Lambda_{\{2\}}(\bm{\beta})>0$
and $|v_1-v_2|$ is large enough then there exists a minimizer of
$\mathcal{F}_{\bm{v}}$ on $\Gamma^{\bm{\beta}}$.
\end{itemize}
\end{thm}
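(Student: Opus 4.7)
My plan is to treat parts (a)--(c) with standard tools (a logarithmic Hardy--Littlewood--Sobolev inequality for systems, the direct method, and a scaling argument) and to devote the main effort to part (d), the genuinely new statement. For (a)--(b) I would first rewrite entropy plus confinement as a Gaussian relative entropy: with $G_i(x):=(2\pi)^{-1}e^{-|x-v_i|^2/2}$,
\[
\int_{\rt}\brho_i\ln\brho_i + \tfrac12\int_{\rt}|x-v_i|^2\brho_i \,=\, \int_{\rt}\brho_i\ln(\brho_i/G_i) - \beta_i\ln(2\pi),
\]
and bound it below by a Csisz\'ar--Kullback / Jensen argument. The remaining logarithmic interaction term would be controlled by the Moser--Trudinger (equivalently, logarithmic HLS) inequality for systems valid under \eqref{beta condition}, as developed in \cite{SW,CSW,W1}; this gives (a). For (b), a minimizing sequence $\bm{\brho}^k$ then has uniformly bounded entropies and second moments, yielding tightness and Dunford--Pettis compactness; the only obstruction to passage to the limit is Dirac concentration of some $J\subseteq I$ at a common point. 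The scaling of (c) below shows that such concentration contributes $+\tfrac{\Lambda_J(\bm\beta)}{4\pi}\ln\lambda$ to the energy, and the strict inequality $\Lambda_J(\bm\beta)>0$ drives $\mathcal{F}_{\bm v}(\bm{\brho}^k)\to+\infty$, contradicting minimization; lower semicontinuity then produces the minimizer.

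For (c), after translating to $v_1=\cdots=v_n=0$, the mass-preserving dilation $\brho_i^\lambda(x):=\lambda^2\brho_i(\lambda x)$ satisfies
\[
\mathcal{F}_{\bm 0}(\bm{\brho}^\lambda)\,=\,\mathcal{F}_{\bm 0}(\bm\brho)+\frac{\Lambda_I(\bm\beta)}{4\pi}\ln\lambda+\Big(\tfrac{1}{\lambda^2}-1\Big)\sum_i\tfrac12\int_{\rt}|x|^2\brho_i.
\]
Under $\Lambda_I(\bm\beta)=0$ the log term vanishes, and $\lambda\to\infty$ gives $\mathcal{F}_{\bm 0}(\bm{\brho}^\lambda)\to\mathcal{F}_{\bm 0}(\bm\brho)-\tfrac12\sum_i\int|x|^2\brho_i$. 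Applying this with $\bm\brho=\bm\brho^*$ a putative minimizer forces $\sum_i\int|x|^2\brho_i^*\le 0$, impossible for $\brho_i^*\in L\log L$ of positive mass.

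For part (d) I would compare the cost of joint concentration against that of a spread-out test. Since $\Lambda_{\{1\}},\Lambda_{\{2\}}>0$, the argument of (b) already excludes solo concentration, so the only obstruction is simultaneous Dirac concentration of both populations at a common point $x_0$. Repeating the scaling of (c) centered at $x_0$ and using $\Lambda_{\{1,2\}}(\bm\beta)=0$, the confinement contribution tends to $\tfrac12\beta_1|x_0-v_1|^2+\tfrac12\beta_2|x_0-v_2|^2$, whose minimum over $x_0$ equals $\tfrac{\beta_1\beta_2}{2(\beta_1+\beta_2)}|v_1-v_2|^2$. Along every jointly concentrating sequence one therefore has the lower bound
\[
\liminf \mathcal{F}_{\bm v}(\bm{\brho}^\lambda)\,\ge\, c_0 + \frac{\beta_1\beta_2}{2(\beta_1+\beta_2)}|v_1-v_2|^2.
\]
A matching upper bound would come from the explicit test $\brho_i^*:=\chi_i(\,\cdot\, - v_i)$ with $\chi_i$ a fixed smooth compactly supported density of mass $\beta_i$ (for instance a translated minimizer of the single-population subcritical problem \cite{BDP,CLM}); its cross interaction contributes $\tfrac{a_{12}\beta_1\beta_2}{2\pi}\ln|v_1-v_2|+O(1)$, while all other terms are uniformly bounded in $|v_1-v_2|$, so $\mathcal{F}_{\bm v}(\bm\brho^*)\le A+B\ln|v_1-v_2|$. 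Since $|v_1-v_2|^2$ dominates $\ln|v_1-v_2|$ for large separations, the infimum is strictly below the concentration threshold; any minimizing sequence is then compact and the direct method delivers the minimizer.

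The main technical obstacle lies in executing the concentration--compactness dichotomy rigorously in (d): one must also rule out partial-mass Diracs, simultaneous Dirac concentration at two distinct points, and escape of mass to infinity. A Lions-type concentration--compactness analysis tailored to log-HLS systems, in the spirit of \cite{SW,W1}, together with the coercivity furnished by the confinement, is expected to suffice; implementing it carefully is the technical heart of the argument.
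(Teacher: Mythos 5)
Your overall architecture matches the paper's: (a) reduces to the known boundedness results for the system log-HLS/Moser--Trudinger functional from \cite{SW,CSW}; your (c) is exactly the paper's dilation identity (Lemma \ref{concentration}), applied directly to a putative minimizer rather than to a minimizing sequence, and is correct; and your final comparison in (d) between the test-function upper bound $O(1)+\frac{a_{12}}{2\pi}\beta_1\beta_2\ln|v_1-v_2|$ and the concentration threshold $O(1)+\frac{\beta_1\beta_2}{2(\beta_1+\beta_2)}|v_1-v_2|^2$ is precisely the paper's closing step. There are, however, two genuine gaps.

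First, in (b) you pass to the limit in the interaction terms $\iint\rho_i^m(x)\ln|x-y|\rho_j^m(y)$ as if this followed from Dunford--Pettis weak $L^1$ compactness; it does not, since this bilinear functional is not weakly continuous. The paper controls the far field via the logarithmic asymptotics of the Newtonian potential (Lemma \ref{asymptotic behavior}) and the near field via strong $H^1_{loc}$ compactness of the potentials (Theorem \ref{BT}, from \cite{ST12}) combined with Orlicz duality. You also need the a priori entropy and second-moment bounds along the minimizing sequence, which the paper extracts from the $\epsilon$-improved subcritical lower bound \eqref{subcritical epsilon}; these are obtainable but not free, and your sketch does not supply them.

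Second, and more seriously, the heart of (d) is the lower bound you state as $\liminf\mathcal{F}_{\bm v}\ge c_0+\frac{\beta_1\beta_2}{2(\beta_1+\beta_2)}|v_1-v_2|^2$ along concentrating sequences --- equivalently, the dichotomy of Proposition \ref{alternatives}: either a minimizer exists or equality holds in \eqref{funct ineq}. You defer this to a ``Lions-type concentration--compactness analysis \dots expected to suffice,'' but this is exactly where the paper's work lies (all of Section 6): it approximates $\bm\beta$ by subcritical $\bm{\beta}_m$, proves a uniform second-moment bound for the approximate minimizers (Lemma \ref{moment bound}, itself a delicate scaling argument), runs a Brezis--Merle-type blow-up alternative on the rescaled potentials, and --- crucially --- invokes the rigidity of entire Liouville systems ($\Lambda_{I'}(\tilde{\bm\beta})=0$ forces $I'=I$ and $\tilde{\bm\beta}=\bm\beta$) to rule out concentration of partial mass or of a proper subset of the populations. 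Without that input, your concentration threshold could in principle be lower (partial-mass Diracs, or a blow-up profile whose energy undercuts $\inf\mathcal{F}_0$), and the comparison with the spread-out test function would not close. So the proposal identifies the correct strategy but leaves its decisive technical step unproven.
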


For a given such matrix $A,$ we define
\begin{defi}\label{def1} . 
	\begin{itemize}
	\item $\bm{\beta}$ is sub-critical if $\Lambda_J(\bm{\beta})>0$ for any $\emptyset\not= J \subseteq I$.
	\item $\bm{\beta}$ is critical if $\Lambda_I(\bm{\beta})=0$ and
	$\Lambda_J(\bm{\beta})>0$ for any $\emptyset\not= J \subset I$.
\end{itemize}   
\end{defi}
 \begin{thm} \label{cor}
 \noindent
 \begin{itemize}
 \item[(a)] There exists a solution of \eqref{liosm} for any sub-critical $\bm{\beta}$ and any $v_1, \ldots, v_n\in\rt$. 
  \item[(b)] If $\bm{\beta}$ is critical, $Var(v_1, \ldots, v_n)=0,$ and $A$ is invertible and irreducible, then there is no solution 
  to \eqref{liosm}.  
  \item[(c)] There exists a solution of \eqref{liosm} for $n=2$ in the critical  case provided $|v_1-v_2|$ is large enough.  
 \end{itemize}
  \end{thm}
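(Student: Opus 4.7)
Parts (a) and (c) will fall out immediately from Theorem \ref{main}(b) and (d), respectively, combined with the standard equivalence that critical points of $\mathcal{F}_{\bm{v}}$ on $\Gamma^{\bm{\beta}}$ induce solutions of \eqref{liosm}. Sub-criticality triggers Theorem \ref{main}(b), which furnishes a minimizer $\bm{\brho}\in\Gamma^{\bm{\beta}}$ of $\mathcal{F}_{\bm{v}}$, and similarly under the hypotheses of (c) one applies Theorem \ref{main}(d). As noted in the paragraph following \eqref{Was} (cf.\ \cite{CSW,Su}), any minimizer is a critical point, and any critical point $\bm{\brho}$ corresponds to a solution $\bm{\bu}$ of \eqref{liosm} via the Newtonian potential representation $\bu_i = -(2\pi)^{-1}\int_{\rt}\log|x-y|\brho_i(y)\,d^2y$ up to an additive constant.

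Part (b) is the substantive assertion and will be handled by a scaling argument. Suppose for contradiction that $(\bu_1,\ldots,\bu_n)$ solves \eqref{liosm}. Since $Var(v_1,\ldots,v_n)=0$, all the $v_i$ coincide, and after translating the $x$-variable we may take $v_i=0$ for every $i$. Set
\begin{align*}
\brho_i(x) := \beta_i\, \frac{e^{\sum_j a_{ij}\bu_j(x) - |x|^2/2}}{\int_{\rt}e^{\sum_j a_{ij}\bu_j(y) - |y|^2/2}d^2y},
\end{align*}
so that $-\Delta\bu_i=\brho_i$, $\int_{\rt}\brho_i=\beta_i$, and the Gaussian weight forces $\bm{\brho}\in\Gamma^{\bm{\beta}}$. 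The key intermediate step is to verify that $\bm{\brho}$ is in fact a critical point of $\mathcal{F}_{\bm 0}$ on $\Gamma^{\bm{\beta}}$. Unwinding the Euler-Lagrange condition shows this amounts to requiring that $\bu_i$ differ from the Newtonian potential of $\brho_i$ by a harmonic function $h_i$ with $A\bm{h}$ constant in $x$; the integrability of $\brho_i$ eliminates any harmonic polynomial of degree $\geq 2$, while the invertibility of $A$ kills the residual linear freedom and forces each $h_i$ to be constant. The hypothesis $\Lambda_J(\bm{\beta})>0$ for $\emptyset\neq J\subsetneq I$ together with irreducibility of $A$ then ensures $\beta_i>0$ for every $i$, so that $\bm{\brho}$ is nontrivial.

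The contradiction is then delivered by the mass-preserving dilation $\brho_{i,\lambda}(x):=\lambda^2\brho_i(\lambda x)$. A direct change of variables in each of the three summands in \eqref{Fv} gives
\begin{align*}
\mathcal{F}_{\bm 0}(\bm{\brho}_\lambda) = \mathcal{F}_{\bm 0}(\bm{\brho}) + \frac{\Lambda_I(\bm{\beta})}{4\pi}\log\lambda + \frac{1}{2}\Big(\frac{1}{\lambda^{2}}-1\Big)\sum_{i=1}^n\int_{\rt}|x|^2\brho_i(x)d^2x.
\end{align*}
Criticality $\Lambda_I(\bm{\beta})=0$ knocks out the $\log\lambda$ term, and differentiating at $\lambda=1$ yields $\frac{d}{d\lambda}\big|_{\lambda=1}\mathcal{F}_{\bm 0}(\bm{\brho}_\lambda)=-\sum_i\int_{\rt}|x|^2\brho_i<0$, which is incompatible with $\bm{\brho}$ being a critical point. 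The main obstacle is precisely the reduction from the PDE \eqref{liosm} to a genuine critical point of $\mathcal{F}_{\bm 0}$ on $\Gamma^{\bm{\beta}}$; this is where invertibility (pinning $\bu_i$ down modulo constants) and irreducibility (forcing $\bm{\beta}>0$) enter the statement, while the dilation computation itself is a routine check.
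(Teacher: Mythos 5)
Your treatment of parts (a) and (c) coincides with the paper's: both follow from Theorem \ref{main}(b),(d) together with the fact that minimizers of $\mathcal{F}_{\bm{v}}$ on $\Gamma^{\bm{\beta}}$ yield solutions of \eqref{liosm}. For part (b) your route is genuinely different from the paper's: you pass from the PDE to a critical point of $\mathcal{F}_{0}$ and run a dilation (virial/Pohozaev) identity on the functional, whereas the paper first invokes the radial symmetry result of \cite{CSW} (Proposition 4.1, valid for invertible, irreducible $A$) and then integrates the resulting radial ODE for $w_i(s)=m_i(e^s)$ to obtain $\Lambda_I(\bm{\beta})=4\pi\sum_i\int_{-\infty}^{\infty}e^{s}w_i'(s)\,ds>0$, a contradiction. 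Your scaling computation of $\mathcal{F}_0(\bm{\rho}_\lambda)$ is correct, and the idea is attractive because it avoids moving planes.

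However, there is a genuine gap at the step you yourself identify as "the main obstacle": the claim that integrability of $\rho_i$ eliminates harmonic polynomials of degree $\geq 2$ and that invertibility of $A$ "kills the residual linear freedom." The linear freedom is \emph{not} killed by integrability or by invertibility. If $u_i=N_i+b_i\cdot x+c_i$ with $N_i$ the Newtonian potential of $\rho_i$, then $\sum_j a_{ij}u_j-|x|^2/2=\sum_j a_{ij}N_j-\tfrac12|x-\tilde v_i|^2+\mathrm{const}$ with $\tilde v_i=\sum_j a_{ij}b_j$, so the exponential remains perfectly integrable; the linear parts merely recenter the Gaussian weights, and since $A$ is invertible \emph{every} collection of centers $\tilde v_1,\dots,\tilde v_n$ arises this way. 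In other words, a "solution with $v_i\equiv 0$ and linear harmonic parts" is exactly a solution of \eqref{liosm} with shifted, possibly far-apart centers — which part (c) shows can exist in the critical case. So unless the harmonic ambiguity is excluded (either by adopting the Newtonian-potential normalization of \eqref{MKS2n} as part of the definition of a solution, or by proving an a priori symmetry/decay statement), your reduction to a critical point of $\mathcal{F}_0$ on $\Gamma^{\bm{\beta}}$ does not go through, and indeed the conclusion would be false under the liberal reading. This is precisely the role of the radial symmetry theorem in the paper's proof, and it is the place where invertibility \emph{and} irreducibility of $A$ are actually used; your proposal never uses irreducibility in an essential way (positivity of the $\beta_i$ already follows from $\Lambda_{\{i\}}(\bm{\beta})>0$), which is a symptom of the missing step. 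The remaining point — that criticality forces $\frac{d}{d\lambda}\big|_{\lambda=1}\mathcal{F}_0(\bm{\rho}_\lambda)=0$ — is standard but should be justified by checking that the Gateaux derivative along $\dot\rho_i=\nabla\cdot(x\rho_i)$ exists and equals $\sum_i c_i\int\nabla\cdot(x\rho_i)=0$, using the Gaussian decay of $\rho_i$ and the logarithmic growth of $u_i$.
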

\begin{rem} . 
	\begin{itemize}
\item 	Theorem \ref{cor}-a,c follows immediately from Theorem \ref{main}-a,b,d. 
\item Theorem \ref{main}-c implies the non-existence of minimizers in the critical case. The non-existence of solutions in the critical case (Theorem \ref{cor}-c) follows from a different argument. 
\item  The results of Theorem \ref{main}-d and Theorem \ref{cor}-c can be easily extended to the case $n>2$, provided $Var(v_1,\ldots, v_n)$ is large enough.  
It is not known if $Var(v_1, \ldots, v_n)\neq 0$ is sufficient for existence  of  solutions of (\ref{liosm}) in the critical case for any $n\geq 2$. 
\end{itemize}
 \end{rem}
 
 %\begin{thm}\label{main}  Assume $\bm{\beta}$ satisfies (\ref{beta condition}).
%	\begin{enumerate}
%		\item	If $\bm{\beta}$ is sub-critical (i.e $\Lambda_J(\bm{\beta})>0$ for any $\emptyset\not=J\subseteq I)$
%then for any $v_1, \ldots v_n\in\rt$ there  exists an entire solution of (\ref{liosm}).
%\item If $\bm{\beta}$ us critical (i.e $\Lambda_I(\bm{\beta})=0$ and 
%$\Lambda_J(\bm{\beta})>0$ for any $J\subset I$, $J\not=I, \emptyset$)
 %and $v_i=v\in\rt$ for all $i\in I$ then there exists no entire solution of (\ref{liosm}).
%\item There exists $\kappa>0$ such that 
 % there exists an entire solution of (\ref{liosm}) if $\bm{\beta}$ is critical  and $Var(v_1, \ldots v_n)>\kappa$.
%	\end{enumerate}

%\end{thm}

Our organization of the article is as follows: in Section $2$ we discussed the boundedness from below of the functional 
$\mathcal{F}_{\bm{v}}$ over $\Gamma^{\bm{\beta}}.$ Section $3$ is devoted to the basic lemmas required for the proof of our
main theorem. In Section $4$ we proved the existence of minimizers for sub critical $\bm{\beta}.$ The critical case has been
analyzed in Sections $5$ and $6$
and we established an if and only if criterion (Proposition \ref{alternatives}) for the existence of 
minimizers. More precisely, we proved that either a minimizer exists or equality holds in \eqref{funct ineq}. 
At the end of this article we exhibited certain examples (when $Var(v_1, v_2)$ large) for which the minimum is 
actually attained and proved the nonexistence result (Theorem 2(b)) when $Var(v_1,\ldots,v_n)=0.$

\section{Boundedness from below}
 Since we can shift $(v_1, \ldots, v_n)$ by any constant vector we can set   $v_1=v_2=\ldots =v_n=0$ if $Var(v_1, \ldots, v_n)= 0$.  The functional 
  $\mathcal{F}_{\bm{v}}$ will be denoted by $\mathcal{F}_{0}$ in that case. Also, we omit the bars from $\brho_i$ from now on.

  We will actually prove the boundedness from below of a little more general functional. 
  For $\bm{\alpha}:=(\alpha_1,\cdots,\alpha_n) \in (\mathbb{R}_+)^n,$ (where $\mathbb{R}_+$ is the set of all positive real numbers) define
  \begin{align}\label{Falphav}
   \mathcal{F}_{\bm{v},\bm{\alpha}}(\bm{\rho}) :=
\sum_{i=1}^n  \int_{\rt} \rho_i(x) \ln \rho_i(x)d^2x + \frac{1}{4\pi}
\sum_{i=1}^n \sum_{j=1}^n a_{ij}& \int_{\rt} \int_{\rt} \rho_i(x)\ln|x-y|\rho_j(y)d^2xd^2y \notag\\
&+\sum_{i=1}^n \alpha_i \int_{\rt} |x-v_i|^2 \rho_i(x)d^2x.
\end{align}
When $v_i = 0$ for all $i,$ it will be denoted by $\mathcal{F}_{0,\bm{\alpha}}.$

  \begin{thm} \label{bounded from below}
 Condition \eqref{beta condition} is necessary and sufficient condition for the boundedness from below
 of $\mathcal{F}_{\bm{v},\bm{\alpha}}$ on $\Gamma^{\bm{\beta}}.$
\end{thm}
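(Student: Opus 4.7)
The plan is to split the analysis into sufficiency and necessity, with the functions $\Lambda_J(\bm{\beta})$ playing the role of sharp constants governing the possible concentration modes. The principal tension in $\mathcal{F}_{\bm{v},\bm{\alpha}}$ is between the entropy $\sum_i\int\rho_i\ln\rho_i$ and the logarithmic interaction $\frac{1}{4\pi}\sum_{i,j}a_{ij}\int\!\!\int\rho_i(x)\rho_j(y)\ln|x-y|\,dx\,dy$ when a subset $J\subseteq I$ of components concentrates at a common point; the positive quadratic moment term $\sum_i\alpha_i\int|x-v_i|^2\rho_i$ plays a secondary role of ruling out mass escape to infinity.

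For sufficiency I would assume \eqref{beta condition} and decompose the log kernel via
\begin{equation*}
\ln|x-y|=\tfrac{1}{2}\ln(1+|x|^2)+\tfrac{1}{2}\ln(1+|y|^2)+\ln\frac{|x-y|}{\sqrt{(1+|x|^2)(1+|y|^2)}}.
\end{equation*}
The first two summands contribute a term controlled by the moments through $\ln(1+|x|^2)\leq|x|^2\leq 2|x-v_i|^2+2|v_i|^2$, which is absorbed by the quadratic penalty $\sum_i\alpha_i\int|x-v_i|^2\rho_i$. The third kernel is bounded above by $\ln 2$, so up to a harmless additive constant what remains to bound below is
\begin{equation*}
\sum_i\int\rho_i\ln\rho_i+\frac{1}{4\pi}\sum_{i,j}a_{ij}\int\!\!\int\rho_i(x)\rho_j(y)\ln\frac{|x-y|}{\sqrt{(1+|x|^2)(1+|y|^2)}}\,dx\,dy,
\end{equation*}
which is a system-level logarithmic Hardy--Littlewood--Sobolev/Moser--Trudinger quantity whose sharp boundedness criterion is precisely \eqref{beta condition}, as established in \cite{CSW,SW}. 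Dispersion is handled by the Gaussian entropy comparison $-\int\rho_i\ln\rho_i\leq\beta_i\ln(\pi e\,\beta_i^{-2}\int|x-v_i|^2\rho_i)$, since the linear-in-moment penalty beats its logarithm.

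For necessity I would construct explicit bubbles that drive $\mathcal{F}_{\bm{v},\bm{\alpha}}\to-\infty$ when \eqref{beta condition} fails. If $\Lambda_J(\bm{\beta})<0$ for some $\emptyset\neq J\subseteq I$, set $\rho_i^\epsilon(x)=\beta_i(\pi\epsilon^2)^{-1}e^{-|x|^2/\epsilon^2}$ for $i\in J$ and a fixed smooth density of mass $\beta_i$ for $i\notin J$; a direct computation of entropy, self- and cross-log interactions and moments yields $\mathcal{F}_{\bm{v},\bm{\alpha}}(\bm{\rho}^\epsilon)=-\tfrac{\Lambda_J(\bm{\beta})}{4\pi}\ln\epsilon+O(1)\to-\infty$ as $\epsilon\to 0^+$. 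If instead $\Lambda_J(\bm{\beta})=0$ and $a_{i_0i_0}+\Lambda_{J\backslash\{i_0\}}(\bm{\beta})\leq 0$ for some $i_0\in J$, then the sign constraints force $a_{i_0i_0}=\Lambda_{J\backslash\{i_0\}}(\bm{\beta})=0$; together with $\Lambda_J(\bm{\beta})=0$ this forces the identity $\sum_{j\in J\backslash\{i_0\}}a_{i_0j}\beta_j=4\pi$. The single-scale bubble now has vanishing leading coefficient, so I would adopt a two-scale configuration in which $\rho_{i_0}$ has width $\epsilon/R$ different from the common width $\epsilon$ of the $\rho_j$, $j\in J\backslash\{i_0\}$, and exploit the above identity (or equivalently a spatial offset of the center of $\rho_{i_0}$) to extract a strictly negative subleading log divergence. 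The hardest part of the proof is this degenerate case: cancellation of the leading log forces one to design the multi-scale test family so that the next-order contribution, controlled by the residual $a_{i_0j}$ cross-terms, carries the correct sign.
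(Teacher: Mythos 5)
Your overall architecture (sufficiency by a kernel decomposition, necessity by explicit bubbles) is reasonable, and your necessity computation when $\Lambda_J(\bm{\beta})<0$ is correct, but there are two genuine gaps. The first is in sufficiency: the quantity you reduce to, namely
\begin{equation*}
\sum_i\int_{\rt}\rho_i\ln\rho_i+\frac{1}{4\pi}\sum_{i,j}a_{ij}\int_{\rt}\int_{\rt}\rho_i(x)\ln\frac{|x-y|}{\sqrt{(1+|x|^2)(1+|y|^2)}}\,\rho_j(y),
\end{equation*}
is \emph{not} bounded from below under \eqref{beta condition}, and no such statement appears in \cite{CSW,SW}. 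Under the dilation $\rho_{i,\lambda}(x)=\lambda^{-2}\rho_i(x/\lambda)$ the planar entropy drops by $2\beta_i\ln\lambda$ while your modified kernel contributes $-\beta_i\beta_j\ln\lambda+O(1)$ (the gain $+\beta_i\beta_j\ln\lambda$ from $\ln|x-y|$ is overwhelmed by $-2\beta_i\beta_j\ln\lambda$ from the two conformal factors), so the displayed quantity tends to $-\infty$ like $-\bigl(2\sum_i\beta_i+\frac{1}{4\pi}\sum_{i,j}a_{ij}\beta_i\beta_j\bigr)\ln\lambda$ for \emph{every} $\bm{\beta}$. The point is that the transplanted (spherical) log-HLS system inequality involves the sphere entropy, which differs from $\int\rho_i\ln\rho_i$ by $2\int\rho_i\ln(1+|x|^2)$; transplanting back leaves residual terms $-\nu_i\int\rho_i\ln(1+|x|^2)$ with $\nu_i=2-\frac{1}{4\pi}\sum_ja_{ij}\beta_j$, and it is precisely these that must be absorbed into $\alpha_i\int|x-v_i|^2\rho_i$. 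Your Gaussian entropy comparison cannot repair this, because the entropy has already been spent inside the log-HLS quantity and cannot be played a second time against the moment. The paper does this bookkeeping, but in one-dimensional variables: after rearrangement and the substitution $w_i(s)=m_i(e^s)$ the dangerous residual is exactly the term $E_R(w)=\int_0^{\ln R}2\sum_i(\beta_i-w_i(s))(\alpha_ie^{2s}-\nu_i)\,ds$, whose integrand is eventually nonnegative precisely because every $\alpha_i>0$.

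The second gap is the degenerate necessity case $\Lambda_J(\bm{\beta})=0$ with $a_{i_0i_0}=\Lambda_{J\backslash\{i_0\}}(\bm{\beta})=0$: your identity $\sum_{j\in J\backslash\{i_0\}}a_{i_0j}\beta_j=4\pi$ is correct, but the two-scale construction that is supposed to extract a negative subleading logarithm is only described, not carried out, and you yourself flag it as the hardest step. The paper sidesteps both difficulties by quoting \cite{CSW,SW} for densities of bounded support (where the moment term is harmless and the unperturbed criterion is already an iff), which settles necessity outright, and reserves its own work for the genuinely new point: that a positive $\bm{\alpha}$ restores the lower bound when $\Lambda_I(\bm{\beta})>0$.
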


\begin{proof}
	First we recall \cite{CSW,SW} that if $\bm{\rho}$ is supported in a given  bounded set 
	then  $\mathcal{F}_{\bm{v},\bm{0}}$ is bounded from below iff \eqref{beta condition} is satisfied. This implies the necessary part. For the sufficient part we know from the same references that \eqref{beta condition} together with the condition $\Lambda_I(\bm{\beta})=0$ imply that $\mathcal{F}_{\bm{v},\bm{0}}$
	is bounded from below. We only need to show that for any positive $\bm{\alpha}$ we still obtain the bound from below in the case $\Lambda_I(\bm{\beta})>0$. Note also that since $|x-v|^2>|x|^2/2-C$ for any $x\in \R^2$ and $C$ depending on $|v|$ it is enough to prove the sufficient condition for $\bm{v}=0$.

The proof is a straight forward adaptation of the corresponding proof in \cite{SW} without the potential $|x|^2.$
For $\bm{\rho} = (\rho_1,\cdots, \rho_n) \in \Gamma^{\bm{\beta}}$ let $\rho_i^*$ be
the symmetric decreasing rearrangement of $\rho_i.$ Then clearly we have
\begin{align*}
 \int_{\rt} \rho_i \ln \rho_i &= \int_{\rt} \rho_i^* \ln \rho_i^*, \ \int_{\rt} \rho_i |\ln \rho_i|
 = \int_{\rt} \rho_i^* |\ln \rho_i^*|, \
 \int_{\rt} |x|^2 \rho_i^* \leq \int_{\rt} |x|^2\rho_i.
\end{align*}
Thus if we define $\bm{\rho}^* = (\rho_1^*, \cdots, \rho_n^*)$ then $\bm{\rho}^* \in \Gamma^{\bm{\beta}}.$
Furthermore, we have (see \cite{CLoss,SW})
\begin{align*}
 \int_{\rt} \rho_i^*(x) \ln |x-y| \rho_j^*(y) \leq \int_{\rt} \rho_i(x) \ln |x-y| \rho_j(y), \ \forall i,j.
\end{align*}
and hence $\mathcal{F}_{0,\bm{\alpha}} (\bm{\rho}^*) \leq \mathcal{F}_{0,\bm{\alpha}} (\bm{\rho}).$ Therefore
it is enough to prove the theorem for radially symmetric decreasing function of $|x|.$
Let $\bm{\rho} \in \Gamma^{\bm{\beta}}$ be a radially symmetric decreasing function of $r=|x|.$
As in \cite{CSW,SW} we define
\begin{align*}
 m_i(r) &= 2\pi\int_0^r \tau \rho_i(\tau) \ d\tau, \ r \in (0,\infty), \\
 u_i(x) &= -\frac{1}{2\pi}\int_{\rt} \ln|x-y|\rho_i(y) \ d^2y.
\end{align*}
Then we get using $\int_{\rt}|x|^2\rho_i < \infty$ and \cite[equation $(5.6)$]{SW}
\begin{align} \label{asymp}
\begin{cases}
 \lim_{R\rightarrow \infty} \left[u_i(R) + \frac{\beta_i}{2\pi} \ln R\right] = 0, \\
 \lim_{R\rightarrow \infty} (\beta_i - m_i(R))R^2 = 0.
 \end{cases}
 \end{align}
Furthermore, if we define
\begin{align*}
\mathcal{F}_{0,\bm{\alpha},R}(\bm{\rho}) :=
 \sum_{i=1}^n  \int_{B(0,R)} \rho_i \ln \rho_i - \frac{1}{2}
 \sum_i \sum_j a_{ij} \int_{B(0,R)}  \rho_i u_j
  +\sum_{i=1}^n \alpha_i \int_{B(0,R)} |x|^2 \rho_i(x),
 \end{align*}
then by dominated convergence theorem we have
\begin{align*}
 {\mathcal{F}_{0,\bm{\alpha}}}(\bm{\rho}) = \lim_{R \rightarrow \infty}\mathcal{F}_{0,\bm{\alpha},R}(\bm{\rho}).
\end{align*}
Again following \cite{SW}, we define $w_i(s) = m_i(e^s).$ Then  
\begin{align*}
 \sum_{i=1}^n\alpha_i  \int_{B(0,R)} |x|^2 \rho_i(x) \ d^2x &= \sum_{i=1}^n 2\pi\alpha_i \int_0^R r^3 \rho_i(r) \ dr \\
 &= \sum_{i=1}^n \alpha_i \int_0^R r^2 m_i^{\prime}(r) \ dr \\
 &= -2\sum_{i=1}^n \alpha_i \int_0^R rm_i(r) \ dr + \sum_{i=1}^n \alpha_im_i(R)R^2 \\
 &= -2\sum_{i=1}^n \alpha_i \int_{-\infty}^{\ln R} e^{2s}w_i(s) \ ds + \sum_{i=1}^n \alpha_i m_i(R)R^2
\end{align*}
and therefore we can write $\mathcal{F}_{0,\bm{\alpha},R}(\bm{\rho}) = G_R(w) - (\ln 2\pi)\sum_{i=1}^n m_i(R),$ where
\begin{align*}
 G_R(w) &= \int_{-\infty}^{\ln R} \sum_{i=1}^n w_i^{\prime} \ln w_i^{\prime} \ ds+
 \int_{-\infty}^{\ln R} \left[2\sum_{i=1}^n w_i - \frac{1}{4\pi}\sum_{i,j=1}^na_{ij}w_iw_j \right] \ ds\notag \\
 &-2\sum_{i=1}^n \alpha_i\int_{-\infty}^{\ln R} e^{2s}w_i \ ds- \sum_{i=1}^n m_i(R)\left(2\ln R
 + \frac{1}{2}\sum_{j=1}^na_{ij}u_j(R) - \alpha_iR^2\right).
\end{align*}
Now define $\nu_i = 2 - \frac{1}{4\pi}\sum_{j=1}^na_{ij}\beta_j.$ Using the identity $\frac{\Lambda_I(\bm{\beta})}{4\pi}
= \sum_{i=1}^n \nu_i\beta_i$ and \eqref{asymp} we get
\begin{align}\label{Gr}
 -\sum_{i=1}^n m_i(R)\left[2 \ln R + \frac{1}{2} \sum_{j=1}^na_{ij}u_j(R)\right] +
 \sum_{i=1}^n 2\nu_i\beta_i \ln R = \frac{\Lambda_I(\bm{\beta})}{4\pi} \ln R+ o_{R}(1),
\end{align}
where $o_R(1)$ stands for a quantity going to zero as $R\rightarrow \infty.$ Utilizing \eqref{Gr}, we can decompose 
$G_R(w)$ as follows
\begin{align*}
G_R(w) = J_{-\infty}(w) + J_{\infty}(w) + E_R(w) + o_R(1),
\end{align*}
where
\begin{align*}
 J_{-\infty}(w) &= \int_{-\infty}^{0} \sum_{i=1}^n w_i^{\prime} \ln w_i^{\prime} \ ds+
 \int_{-\infty}^{0} \left[2\sum_{i=1}^n w_i - \frac{1}{4\pi}\sum_{i,j=1}^na_{ij}w_iw_j \right] \ ds \\
 &-2\sum_{i=1}^n \alpha_i\int_{-\infty}^0 e^{2s}w_i \ ds, \\
 J_{\infty}(w) & = \int_{0}^{\ln R} \sum_{i=1}^n w_i^{\prime} \ln w_i^{\prime} \ ds \\
 &+ \int_{0}^{\ln R} \left[\sum_{i=1}^n 2(1-\nu_i)w_i - \frac{1}{4\pi}\sum_{i,j=1}^na_{ij}w_iw_j +
 \frac{\Lambda_I(\bm{\beta})}{4\pi}\right] \ ds \\
 E_R(w) &= -2\sum_{i=1}^n \alpha_i \int_{0}^{\ln R} e^{2s}w_i \ ds + \sum_{i=1}^n 2\nu_i\int_0^{\ln R} w_i \ ds
 -2\left(\sum_{i=1}^n\nu_i\beta_i\right)\ln R \\
 &+\sum_{i=1}^n \alpha_i m_i(R)R^2.
 \end{align*}

By \cite{SW} we have $J_{-\infty}$ and $ J_{\infty}$ are bounded from below on $\Gamma^{\bm{\beta}},$ once we observe that
\begin{align*}
 \int_{-\infty}^0e^{2s}w_i \leq \beta_i\int_{-\infty}^0e^{2s} \leq \frac{\beta_i}{2}.
\end{align*}
Therefore, we only need to show that $E_R(w)$ is bounded from below. We can rewrite $E_R(w)$ in the following way
\begin{align*}
 E_R(w) &= \int_{0}^{\ln R} \left[\sum_{i=1}^n 2\left(\nu_i - \alpha_ie^{2s}\right)w_i -2\sum_{i=1}^n \nu_i \beta_i
 + 2\sum_{i=1}^n \alpha_i \beta_i e^{2s} \right] \ ds \\
 &= \int_{0}^{\ln R} \left[2\sum_{i=1}^n (\beta_i - w_i(s))(\alpha_ie^{2s} - \nu_i) \right] \ ds.
\end{align*}

Now $w_i(s) \leq \beta_i$ for all $s$ and $\alpha_i>0, \nu_i$ are being fixed numbers, we can find a $R_0>0,$ independent of
$w_i$ such that $(\beta_i - w_i(s))(\alpha_ie^{2s} - \nu_i) \geq 0$ for all $s \geq \ln R_0.$ Again since
\begin{align*}
 \left|\int_0^{\ln R_0} \left[2\sum_{i=1}^n (\beta_i - w_i(s))(\alpha_ie^{2s} - \nu_i) \right] \ ds\right|
 \leq \sum_{i=1}^n 4\beta_i \left(\frac{\alpha_i}{2}R_0^2 - \nu_i\ln R_0 - \frac{\alpha_i}{2}\right).
\end{align*}
we have $E_R(w) \geq -|E_{R_0}(w)| \geq -C.$ This proves the sufficiency of the condition \eqref{beta condition}.
\end{proof}

\section{Basic Lemmas}
In this section we will recall a few definitions and lemmas and also prove some basic ingredients required for 
the proof of our main results. We define the space $\mathbb{L}\ln \mathbb{L}(\rt)$ as the Orlicz space 
determined by the $N$-function $N(t) = (1 + t)\ln (1+t) - t, t \geq 0$:
\begin{align*}
 \mathbb{L}\ln \mathbb{L}(\rt) := \left\{\rho : \rt \rightarrow \mathbb{R} \ \mbox{measurable} \ : 
 \int_{\rt} [(1 + |\rho|)\ln (1 + |\rho|) - |\rho|] d^2x < \infty \right\}.
\end{align*}
Then $\mathbb{L}\ln \mathbb{L}(\rt)$ is a Banach space with respect to the Luxemberg norm (because 
$N(t)$ satisfies the $\Delta_2$ condition: $N(2t) \leq 2N(t)$ for all $t \geq 0$).

The dual space of $\mathbb{L}\ln \mathbb{L}(\rt)$ is the Orlicz space determined by the 
$N$-function $M(t) = (e^t-t-1), t \geq 0.$ It is important to remark that $\mathbb{L}\ln \mathbb{L}(\rt)$ is not reflexive
(because $M(t)$ does not satisfy the $\Delta_2$ condition). However, there is a notion of weak convergence
which is slightly weaker than the usual weak convergence in Banach spaces. 
A sequence $\rho_m \in \mathbb{L}\ln \mathbb{L}(\rt)$
is said to converge $L_M$-weakly to $\rho$ if
\begin{align*}
 \int_{\rt} \rho_m \phi \rightarrow \int_{\rt} \rho \phi, \ \mbox{for all bounded measurable functions $\phi$ with bounded 
 support}. 
\end{align*}
It is well known   from the general Orlicz space theory \cite{Kr} that $\mathbb{L}\ln \mathbb{L}(\rt)$ is $L_M$-weakly compact. To
simplify our notations we will denote the weak convergence (in the above sense) by $\rho_m\rightharpoonup \rho.$ 

We begin with the following elementary lemma whose proof can be found in \cite{BDP}:
\begin{lem} \label{boundedness}
 For $ 1 \leq i \leq n$ let $\rho_i \in L^1(\rt)$ be such that $\rho_i \geq 0$ and satisfies
 \begin{align*}
  \int_{\rt} \rho_i \ln \rho_i \leq C_0,  \int_{\rt}|x|^2\rho_i \leq C_0.
 \end{align*}
Then
\begin{align*}
 \sum_{i=1}^n\int_{\rt} \rho_i |\ln \rho_i| \leq \sum_{i=1}^n\int_{\rt} \rho_i \ln \rho_i +
 2\ln 2\pi\left(\sum_{i=1}^n \int_{\rt} \rho_i\right)  +  2\sum_{i=1}^n \int_{\rt}|x|^2\rho_i + 2ne^{-1}.
\end{align*}
\end{lem}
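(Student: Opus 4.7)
My plan is to reduce the estimate to a one-index inequality and then handle the negative part of $\rho \ln \rho$ using a Gaussian comparison. The starting point is the pointwise identity
\begin{align*}
\rho_i |\ln \rho_i| = \rho_i \ln \rho_i + 2\rho_i (\ln \rho_i)_-,
\end{align*}
where $(\ln \rho_i)_- := \max(-\ln \rho_i, 0)$, which vanishes on $\{\rho_i \geq 1\}$ and equals $-\ln \rho_i$ on $\{\rho_i < 1\}$. So after summing over $i$, it suffices to prove the one-index bound
\begin{align*}
\int_{\{\rho_i < 1\}} (-\rho_i \ln \rho_i)\, d^2x \;\leq\; \ln(2\pi)\int_{\rt}\rho_i\, d^2x + \int_{\rt}|x|^2 \rho_i\, d^2x + e^{-1},
\end{align*}
from which the lemma follows by summing and multiplying by $2$ (with a little room to spare on the $|x|^2$ coefficient).

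To prove this one-index bound I would introduce the Gaussian probability density $g(x) = \frac{1}{2\pi}e^{-|x|^2/2}$ and factor $\rho_i = \sigma \cdot g$, so that
\begin{align*}
-\rho_i \ln \rho_i \;=\; \rho_i \ln(2\pi) + \tfrac{1}{2}\rho_i |x|^2 \,-\, g\,\sigma \ln \sigma.
\end{align*}
The first two terms integrate exactly to $\ln(2\pi)\int \rho_i$ and $\tfrac{1}{2}\int |x|^2 \rho_i$ after extending the integration from $\{\rho_i <1\}$ to $\rt$ (they are nonnegative). For the remaining Gaussian-weighted term, the elementary calculus fact $-t\ln t \leq e^{-1}$ for $t \in [0,1]$ together with $-t\ln t \leq 0$ for $t \geq 1$ gives $(-\sigma \ln \sigma)_+ \leq e^{-1}$ pointwise, so
\begin{align*}
-\int_{\{\rho_i<1\}} g\,\sigma \ln \sigma \,d^2x \;\leq\; e^{-1}\int_{\rt} g\, d^2x \;=\; e^{-1}.
\end{align*}

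Adding these three contributions produces the one-index inequality, and summing over $i=1,\ldots,n$ and multiplying by $2$ yields the claim. I do not anticipate a real obstacle here: the only delicate point is choosing the reference density $g$ so that its logarithm exactly absorbs the $|x|^2/2$ term and the normalization $\int g = 1$ lets the $e^{-1}$ bound survive integration; everything else is a direct computation using $\int \rho_i = \beta_i$ and $\int |x|^2 \rho_i < \infty$.
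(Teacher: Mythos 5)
Your proof is correct, and it is essentially the standard argument for this lemma (which the paper does not prove itself but cites from \cite{BDP}): split off the negative part of $\rho_i\ln\rho_i$ on $\{\rho_i<1\}$ and control it by comparison with a Gaussian reference density, using $-t\ln t\le e^{-1}$. Your version in fact yields the slightly sharper coefficient $1$ (rather than $2$) in front of $\sum_i\int_{\rt}|x|^2\rho_i$, which of course still implies the stated inequality.
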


%\begin{proof}
%  \begin{align*}
%   \bar \rho_i = \rho_i \chi_{\{\rho_i \leq 1\}}, \ \mbox{for} \ 1 \leq i \leq n, \ \mbox{and} \
%   \mu(x) = \frac{1}{2\pi}e^{-\frac{|x|^2}{2}}.
%  \end{align*}
% Then $\int_{\rt} \mu(x) \ d^2x = 1$ and $m_i:= \int_{\rt} \bar \rho_i \leq \beta_i:= \int_{\rt}\rho_i.$
% Applying Jensen's inequality to the function $U_i(x) = \frac{\bar \rho_i(x)}{\mu(x)}$ with respect to the probability
% measure $d\mu$ we get
% \begin{align}\label{Jensen}
%  \int_{\rt}U_i \ln U_i \ d\mu \geq \left(\int_{\rt} U_i \ d\mu\right)\ln \left(\int_{\rt} U_i \ d\mu\right) = m_i\ln m_i.
% \end{align}
% Substituting $U_i(x) = \frac{\bar \rho_i(x)}{\mu(x)}$ in \eqref{Jensen} we get
% \begin{align*}
%  \int_{\rt} \bar \rho_i \ln \rho_i \geq m_i \ln m_i - m_i \ln 2\pi - \frac{1}{2}\int_{\rt}|x|^2 \bar \rho_i.
% \end{align*}
%
% \end{proof}

\begin{lem} \label{Fatous lemma}
  Let $\{\rho_m\}$ be a sequence in $\mathbb{L}\ln\mathbb{L}(\rt)$ such that
 \begin{align*}
  \int_{\rt} \rho_m \ln \rho_m \leq C_0, \ \int_{\rt} \rho_m = \beta, \ \int_{\rt} |x|^2\rho_m \leq C_0.
 \end{align*}
 Then there exists $\rho \in \mathbb{L}\ln\mathbb{L}(\rt)$ such that up to a subsequence  $\rho_m \rightharpoonup \rho$ in
 the topology of $\mathbb{L}\ln\mathbb{L}(\rt)$ and satisfies
\begin{align} \label{Fl}
 \int_{\rt} \rho \ln \rho \leq \liminf_{n\rightarrow\infty} \int_{\rt} \rho_m \ln \rho_m.
\end{align}
\end{lem}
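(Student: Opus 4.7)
The plan is to obtain compactness in weak $L^{1}(\rt)$, establish lower semicontinuity on every ball via convexity of the entropy, and control the tail on $\{|x|>R\}$ uniformly in $m$. For compactness, Lemma~\ref{boundedness} converts the three hypothesised bounds into $\sup_{m}\int_{\rt}\rho_{m}|\ln\rho_{m}|<\infty$; the de la Vall\'ee--Poussin criterion then yields uniform integrability of $\{\rho_{m}\}$ on $\rt$, while Chebyshev's inequality $\int_{\{|x|>R\}}\rho_{m}\le C_{0}/R^{2}$ supplies tightness. The Dunford--Pettis theorem therefore produces a subsequence with a weak $L^{1}(\rt)$ limit $\rho$, which is \emph{a fortiori} an $L_{M}$-weak limit in the sense of this section; testing against the truncations $\min(|x|^{2},R)\in L^{\infty}$ and using monotone convergence also give $\int_{\rt}\rho|x|^{2}\le C_{0}$.

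\textbf{Semicontinuity on balls.} Fix $R>0$ and write $B_{R}=B(0,R)$. Since $t\mapsto t\ln t+e^{-1}\ge 0$ is convex, the functional $\rho\mapsto\int_{B_{R}}\rho\ln\rho$ is convex, bounded below by $-|B_{R}|/e$, and strongly $L^{1}(B_{R})$-lower semicontinuous (apply Fatou along an a.e.\ convergent subsequence of any strongly convergent one). A classical corollary of Mazur's lemma upgrades this to weak lower semicontinuity, giving
\[ \int_{B_{R}}\rho\ln\rho\;\le\;\liminf_{m\to\infty}\int_{B_{R}}\rho_{m}\ln\rho_{m}. \]

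\textbf{Uniform tail estimate and conclusion.} For any $\alpha\in(0,1/2)$ the elementary bound $-t\ln t\le\alpha^{-1}t^{1-\alpha}$ on $(0,1]$, combined with H\"older's inequality applied to the identity $\rho_{m}^{1-\alpha}=(\rho_{m}|x|^{2})^{1-\alpha}|x|^{-2(1-\alpha)}$ with conjugate exponents $1/(1-\alpha)$ and $1/\alpha$, yields
\[ \int_{\{|x|>R\}}\rho_{m}(\ln\rho_{m})_{-}\;\le\;\frac{1}{\alpha}\Bigl(\int_{\rt}\rho_{m}|x|^{2}\Bigr)^{1-\alpha}\Bigl(\int_{\{|x|>R\}}|x|^{-2(1-\alpha)/\alpha}\,d^{2}x\Bigr)^{\alpha}\;\le\;\varepsilon(R), \]
with $\varepsilon(R)\to 0$ uniformly in $m$, since $2(1-\alpha)/\alpha>2$. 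The identical estimate applies verbatim to $\rho$, hence $\int_{\rt}\rho(\ln\rho)_{-}<\infty$. Combining with the previous display via $\int_{B_{R}}\rho_{m}\ln\rho_{m}\le\int_{\rt}\rho_{m}\ln\rho_{m}+\varepsilon(R)$ and letting $R\to\infty$, the left-hand side $\int_{B_{R}}\rho\ln\rho$ tends to $\int_{\rt}\rho\ln\rho\in(-\infty,+\infty]$ by monotone convergence on the positive part and the tail bound on the negative part, yielding \eqref{Fl}. The main obstacle is precisely this uniform tail estimate: a Jensen comparison with a Gaussian reference measure only gives a uniform lower bound on $\int_{\{|x|>R\}}\rho_{m}\ln\rho_{m}$, not a vanishing one, because $\int_{\{|x|>R\}}\rho_{m}|x|^{2}$ is uniformly bounded but need not decay to zero in $m$. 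Trading a sliver of concavity in $\rho^{1-\alpha}$ against the second moment via H\"older is what produces the required polynomial decay in $R$.
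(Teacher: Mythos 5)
Your proposal is correct and follows essentially the same route as the paper: weak $L^{1}$ compactness from the entropy and second--moment bounds, Mazur's lemma applied to the convex entropy for lower semicontinuity on balls, and exactly the same H\"older estimate trading $-t\ln t\le\alpha^{-1}t^{1-\alpha}$ against $\int|x|^{2}\rho_{m}$ to kill the tail (the paper's Lemma \ref{weakly closed} packages the last two steps as weak closedness of a sublevel set $\mathcal{S}$). The only cosmetic differences are that you invoke Dunford--Pettis where the paper passes through weak-$*$ compactness of measures, and that by working with $(\ln\rho_{m})_{-}$, which is automatically supported on $\{\rho_{m}<1\}$, you avoid the symmetric decreasing rearrangement the paper uses to secure the pointwise bound $\rho^{*}\le\beta/(\pi|x|^{2})$.
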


\begin{rem}
 The conclusion of the lemma is false without the assumption on the uniform boundedness of $\int_{\rt} |x|^2\rho_m.$ 
 As a counter example, let
 $\phi \in C_c^{\infty}(\rt)$ be a smooth cutoff function such that $0 \leq \phi \leq 1 - \delta,$ for some $\delta \in (0,1).$  
 Let $x_m$ be a sequence in $\rt$ such that $|x_m| \nearrow \infty$ and define the sequence
 \begin{align*}
  \rho_m(x) = \phi(x + x_m).
 \end{align*}
Then it is easy to check that $\int_{\rt} |x|^2\rho_m \rightarrow \infty,$ and
\begin{align*}
 \int_{\rt} \rho_m \ln \rho_m = \int_{\rt} \phi \ln \phi < 0, \ \mbox{for all} \ m.
\end{align*}
But $\rho_m \rightharpoonup \rho \equiv 0$ in $\mathbb{L}\ln\mathbb{L} (\rt)$ and hence $\int_{\rt} \rho \ln \rho = 0.$ Therefore the assumption
$\int_{\rt} |x|^2\rho_m$ bounded is a necessary condition for the Fatou's type Lemma \eqref{Fl} to hold true.
\end{rem}

We need some supplementary lemmas to prove Lemma \ref{Fatous lemma}.
\begin{lem} \label{L1 weak convergence}
 Let $\{\rho_m\}$ be a sequence in $\mathbb{L}\ln\mathbb{L}(\rt)$ such that
 \begin{align} \label{upper bound}
  \int_{\rt} \rho_m \ln \rho_m \leq C_0, \ \int_{\rt} \rho_m = \beta, \ \int_{\rt} |x|^2\rho_m \leq C_0.
 \end{align}
Then there exists a $\rho \in L^1(\rt, (1+|x|^2)d^2x)$ such that (up to a subsequence) $\rho_m\rightharpoonup \rho$ weakly in $L^1(\rt),$
i.e.,
\begin{align*}
 \int_{\rt}\rho_m g \rightarrow \int_{\rt}\rho g, \ \mbox{for all} \ g \in L^{\infty}(\rt).
\end{align*}
\end{lem}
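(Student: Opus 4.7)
The plan is to apply the Dunford--Pettis characterization of weak sequential compactness in $L^1(\rt)$: a bounded sequence is weakly precompact if and only if it is uniformly integrable and tight (no mass escaping to infinity). The three hypotheses on $\{\rho_m\}$ will deliver exactly these three ingredients: $L^1$ boundedness is immediate from $\int \rho_m = \beta$, the entropy bound (upgraded via Lemma \ref{boundedness}) yields uniform integrability, and the second--moment bound yields tightness.

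First I would invoke Lemma \ref{boundedness} to replace the one--sided bound $\int \rho_m \ln \rho_m \leq C_0$ by the stronger two--sided bound $\sup_m \int_{\rt}\rho_m |\ln \rho_m| \leq C_1$. Uniform integrability then follows from the standard splitting: for any measurable set $E\subset\rt$ and any $A>1$,
\begin{align*}
\int_E \rho_m \;\leq\; A\,|E| \;+\; \frac{1}{\ln A}\int_{\{\rho_m > A\}} \rho_m |\ln \rho_m| \;\leq\; A\,|E| + \frac{C_1}{\ln A},
\end{align*}
so given $\eta>0$ one first chooses $A$ so that $C_1/\ln A < \eta/2$, then $\delta = \eta/(2A)$. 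Tightness is even easier: Chebyshev's inequality gives
\begin{align*}
\int_{|x|>R} \rho_m \;\leq\; \frac{1}{R^2}\int_{\rt} |x|^2 \rho_m \;\leq\; \frac{C_0}{R^2},
\end{align*}
uniformly in $m$, which tends to $0$ as $R\to\infty$.

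With these three pieces in hand, Dunford--Pettis produces, along a subsequence, a limit $\rho \in L^1(\rt)$ with $\int_{\rt}\rho_m g \to \int_{\rt}\rho g$ for every $g\in L^{\infty}(\rt)$. To confirm that $\rho$ in fact belongs to $L^1(\rt,(1+|x|^2)d^2x)$, I would test against the bounded, compactly supported functions $\chi_{B(0,R)}(x)(1+|x|^2)$ to obtain $\int_{B(0,R)} (1+|x|^2)\rho = \lim_m \int_{B(0,R)}(1+|x|^2)\rho_m \leq \beta + C_0$, and then let $R\to\infty$ using monotone convergence on the left-hand side.

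The main (and essentially only) technical point is the passage from the one--sided entropy bound $\int \rho_m \ln \rho_m \leq C_0$ to a two--sided bound on $\int \rho_m |\ln \rho_m|$; this is precisely what Lemma \ref{boundedness} supplies, using the second-moment control to dominate the negative part. Once that is available, the remainder is a textbook application of the classical weak-compactness criteria in $L^1$.
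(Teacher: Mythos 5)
Your proof is correct, and it reaches the conclusion by a somewhat different route than the paper. You invoke the Dunford--Pettis criterion directly: $L^1$-boundedness from the mass constraint, uniform integrability from the two-sided entropy bound (obtained, as in the paper, from Lemma \ref{boundedness}) via the explicit splitting $\int_E \rho_m \leq A|E| + C_1/\ln A$, and tightness from Chebyshev applied to the second moment; weak sequential compactness then follows at once, and the moment bound on the limit is recovered by testing against $\chi_{B(0,R)}(1+|x|^2)\in L^\infty$ and monotone convergence. The paper instead proceeds by hand: it first extracts a weak* limit measure $\mu$ using only $\int\rho_m=\beta$, argues that the uniform $\mathbb{L}\ln\mathbb{L}$ bound forces $\mu$ to have an $L^1_{loc}$ density $\rho$, uses the second-moment bound to show $\int\rho=\beta$ (no mass escapes), upgrades to convergence against bounded continuous functions by Portmanteau, and finally extends to all of $L^\infty$ via Lusin and Tietze. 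The two arguments rest on exactly the same analytic inputs (entropy prevents concentration, second moment prevents vanishing at infinity); yours packages them through the named compactness theorem and is shorter and arguably cleaner, while the paper's version makes the identification of the limit and the non-loss of mass explicit without appealing to Dunford--Pettis or Eberlein--\v{S}mulian. Both are complete proofs of the stated lemma.
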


\begin{proof}
By Lemma \ref{boundedness}, the assumption  \eqref{upper bound} implies that
\begin{align*}
 \int_{\rt} |\rho_m| |\ln \rho_m| \leq C.
\end{align*}
for some constant $C,$ and hence $\int_{\rt}[(1 + \rho_m) \ln(1 + \rho_m) -\rho_m ]$ is uniformly bounded.
%For simplicity
%of notation we denote the N-function $(1+t)\ln (1+t) - t$ by $N(t).$ Let $M(t):= (e^t - t -1)$ be the complementary
%function of $N(t).$
Since $\int_{\rt} \rho_m = \beta$ by weak* compactness in $L^1$ there exists a finite measure $\mu$ on
$\rt$ such that
\begin{align*}
 \int_{\rt} \rho_m \phi \rightarrow \int_{\rt} \phi \ d\mu, \ \mbox{for all} \ \phi \in C_0(\rt).
\end{align*}
Furthermore, the uniform boundedness of $\int_{\rt}[(1 + \rho_m) \ln(1 + \rho_m) -\rho_m ]$ implies  $\mu$ has a
density $\rho \in L^1_{loc}(\rt).$
Now we claim that $\int_{\rt}|x|^2\rho < +\infty.$ To prove it we let $\phi\in C_0(\mathbb{R}^2)$ such that
$\phi(x) = |x|^2$ in $B(0,R), 0 \leq \phi \leq |x|^2$ in $\rt$. Then by \eqref{upper bound}
and $L^1$ weak* convergence we get
\begin{align*}
 \int_{\{|x| < R\}} |x|^2\rho \leq \int_{\mathbb{R}^2} \rho\phi = \lim_{m\rightarrow\infty} \int_{\mathbb{R}^2} \rho_m\phi \leq C_0.
\end{align*}
Letting $R \rightarrow \infty$ we reach at the desired claim. Moreover, the assumption $\int_{\rt} |x|^2\rho_m \leq C_0$
gives $\int_{\rt} \rho = \beta.$ Therefore, by Portmanteau's theorem
\begin{align} \label{bcf}
 \int_{\rt} \rho_m \phi \rightarrow \int_{\rt} \rho\phi,
\end{align}
for all bounded continuous functions
$\phi$ on $\rt.$ Using Lusin's theorem and Tietz's extension theorem we can extend this result to 
$\phi\in \mathbb{L}^\infty(\mathbb{R}^2)$. 
\end{proof}

\begin{lem} \label{weakly closed}
 The set
 \begin{align*}
  \mathcal{S} := \left\{\rho \in L^1(\rt) : \rho \geq 0, \int_{\rt}\rho \ln \rho \leq \alpha, \int_{\rt} \rho = \beta,
  \int_{\rt}|x|^2\rho \leq C_0\right\}
 \end{align*}
is a weakly closed subset in $L^1(\rt).$
\end{lem}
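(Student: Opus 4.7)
The plan is to take a sequence $\rho_m \in \mathcal{S}$ with $\rho_m \rightharpoonup \rho$ weakly in $L^1(\rt)$ and verify that the four defining properties of $\mathcal{S}$ are inherited by the limit $\rho$. Three of these are essentially routine; the genuine content is the lower semicontinuity of the entropy, for which I would invoke the preceding Lemma \ref{Fatous lemma}.

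First I would dispose of the easy points. Since every constant and every characteristic function of a measurable set belongs to $L^\infty(\rt)$, weak $L^1$ convergence gives $\int_E \rho = \lim_m \int_E \rho_m \geq 0$ for all measurable $E$, so $\rho \geq 0$ a.e., and testing with $g\equiv 1$ gives $\int_{\rt}\rho = \beta$. For the second moment bound I would truncate: for each $R > 0$ the function $g_R(x) = \min(|x|^2, R^2)$ lies in $L^\infty(\rt)$, so
\begin{align*}
\int_{\rt} g_R(x)\, \rho(x)\, d^2x = \lim_{m\to\infty}\int_{\rt} g_R(x)\, \rho_m(x)\, d^2x \leq \limsup_{m\to\infty}\int_{\rt}|x|^2 \rho_m \leq C_0.
\end{align*}
Since $g_R \nearrow |x|^2$ pointwise, monotone convergence yields $\int_{\rt}|x|^2 \rho \leq C_0$.

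The remaining (and only genuinely nontrivial) step is the entropy inequality $\int_{\rt}\rho\ln\rho \leq \alpha$. I would argue as follows. The sequence $\rho_m$ satisfies all the hypotheses of Lemma \ref{Fatous lemma} (uniform bounds on $\int\rho_m \ln\rho_m$, on $\int\rho_m = \beta$, and on $\int |x|^2 \rho_m$). Hence there is a subsequence and some $\tilde\rho \in \mathbb{L}\ln\mathbb{L}(\rt)$ with $\rho_{m_k}\rightharpoonup \tilde\rho$ in the $L_M$-sense and $\int_{\rt}\tilde\rho\ln\tilde\rho \leq \liminf_{k}\int_{\rt}\rho_{m_k}\ln\rho_{m_k} \leq \alpha$. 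Since $L_M$-weak convergence is tested against bounded functions of bounded support, and our hypothesis gives $L^1$-weak convergence (tested against all of $L^\infty$), the two limits must agree almost everywhere, so $\tilde\rho = \rho$. Therefore $\int_{\rt}\rho\ln\rho \leq \alpha$, and $\rho \in \mathcal{S}$.

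The main obstacle is precisely this last identification of limits: one must confirm that the $L_M$-weak subsequential limit produced by Lemma \ref{Fatous lemma} coincides with the prescribed weak $L^1$ limit $\rho$. This follows because both limits are characterized by their action on bounded continuous functions of compact support, which separates integrable functions, so the two weak limits must agree. Once this identification is in place, everything else is a direct application of previously established tools, and $\mathcal{S}$ is shown to be weakly closed in $L^1(\rt)$.
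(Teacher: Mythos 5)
Your treatment of the three routine properties (nonnegativity, conservation of mass, and the second moment bound via the truncations $g_R(x)=\min(|x|^2,R^2)$ and monotone convergence) is fine. The problem is the entropy step: your argument is circular within the logical structure of the paper. Lemma \ref{Fatous lemma} is only \emph{stated} before Lemma \ref{weakly closed}; its proof comes afterwards and consists precisely of combining Lemma \ref{L1 weak convergence} with the weak closedness of $\mathcal{S}$ established in Lemma \ref{weakly closed} (the text announces this explicitly: ``We need some supplementary lemmas to prove Lemma \ref{Fatous lemma}''). So you cannot invoke the Fatou-type inequality \eqref{Fl} here --- the lower semicontinuity of $\int \rho\ln\rho$ under weak $L^1$ convergence is exactly the content you are being asked to supply, and assuming it begs the question.

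The paper closes this gap by a genuinely different route that avoids any semicontinuity input: it observes that $\mathcal{S}$ is convex (by convexity of $t\mapsto t\ln t$) and then proves that $\mathcal{S}$ is \emph{strongly} closed in $L^1(\rt)$, so that Mazur's lemma upgrades strong closedness plus convexity to weak closedness. The strong closedness itself is where the work lies: given $\rho_m\to\rho$ in $L^1$, one passes to symmetric decreasing rearrangements and a pointwise a.e.\ convergent subsequence, applies Fatou's lemma on balls $B(0,R)$, and controls the tails $\int_{\{|x|>R\}}\rho_m^*|\ln\rho_m^*|$ uniformly in $m$ using the pointwise bound $\rho_m^*(|x|)\leq \beta/(\pi|x|^2)$ together with the uniform second moment bound and a H\"older estimate, obtaining a tail of size $O\bigl(R^{-2(1/\epsilon-2)}\bigr)$. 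If you want to repair your proposal, you must either reproduce an argument of this kind or give an independent proof of entropy lower semicontinuity that does not route through Lemma \ref{Fatous lemma}; your identification of the $L_M$-weak limit with the weak $L^1$ limit, while correct, does not address this.
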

\begin{proof}
 We will show that the set $\mathcal{S}$ is a convex and strongly closed subset of $L^1(\rt).$ Then by Mazur's lemma it will
imply the weakly closeness of $\mathcal{S}.$ Again by the convexity of $t\ln t$ we only need to show that $\mathcal{S}$ is strongly closed in
$L^1(\rt).$ Let $\{\rho_m\}_m$ be a sequence in $L^1(\rt)$ such that $\rho_m \rightarrow \rho$ in $L^1(\rt).$ Let $\rho_m^{*},
\rho^*$ be the symmetric decreasing rearrangement of $\rho_m$ and $\rho$ respectively. Then $\rho_m^* \rightarrow \rho^*$
in $L^1(\rt)$ and up to a subsequence $\rho_m$ (respectively $\rho_m^*$) converges point wise a.e. in $\rt.$ By strong convergence
 and Fatou's lemma we have
\begin{align*}
 \int_{\rt} \rho = \beta, \ \int_{\rt} |x|^2\rho \leq C_0.
\end{align*}
Furthermore, by Lemma \ref{boundedness} and the point wise convergence we obtain
\begin{align*}
 \int_{\rt} \rho |\ln \rho| < + \infty.
\end{align*}
To conclude the proof of the lemma we will show $\int_{\rt} \rho^*\ln \rho^* \leq \alpha.$ Using Fatou's lemma we get
\begin{align*}
 \int_{B(0,R)} \rho^*\ln \rho^*  \leq \liminf \int_{B(0,R)} \rho_m^*\ln \rho_m^*,
\end{align*}
for any $R>0.$ Now to estimate for $|x|>R$ we will use the bound $0 \leq \rho^*(|x|) \leq \frac{\beta}{\pi|x|^2}.$ The bound
follows from
\begin{align*}
 \beta = \int_{\rt} \rho  = \int_{\rt}\rho^* = 2\pi \int_0^{\infty} s \rho^*(s) ds \geq 2\pi \int_0^r s \rho^*(s) ds \geq
\pi r^2\rho^*(r).
 \end{align*}
Choosing $\epsilon \in (0,\frac{1}{2})$ and using
$\ln (1/t) \leq 1/t$ 
for $t <1$ we get, after multiplying by  $\epsilon$ and using $\rho^*(x) < 1$
 for sufficiently large $R$
\begin{align*}
 \int_{\{|x|>R\}} \rho_m^* |\ln \rho_m^*| &\leq \frac{1}{\epsilon} \int_{\{|x|>R\}} \rho_m^* \frac{1}{(\rho_m^*)^{\epsilon}} \\
 &=\frac{1}{\epsilon} \int_{\{|x|>R\}} \left(\rho_m^*\right)^{1 - \epsilon}, \\
 &= \frac{1}{\epsilon} \int_{\{|x|>R\}}\frac{\left(|x|^2\rho_m^*\right)^{1 - \epsilon}}{|x|^{2(1-\epsilon)}}, \\
 &\leq \frac{1}{\epsilon}\left(\int_{\{|x|>R\}}|x|^2 \rho_m^*\right)^{1-\epsilon}\left(\int_{\{|x|>R\}} |x|^{2(1- \frac{1}{\epsilon})}\right)^{\epsilon},\\
 &= O\left(\frac{1}{R^{2(\frac{1}{\epsilon}-2)}}\right)
\end{align*}
Thus we obtain
\begin{align*}
 \int_{B(0,R)} \rho_m^* \ln \rho_m^* \leq \int_{\rt} \rho_m^* \ln \rho_m^* + O\left(\frac{1}{R^{2(\frac{1}{\epsilon}-2)}}\right),
\end{align*}
and hence
\begin{align*}
 \int_{B(0,R)} \rho^* \ln \rho^* \leq \liminf \int_{\rt} \rho_m^* \ln
 \rho_m^* + O\left(\frac{1}{R^{2(\frac{1}{\epsilon}-2)}}\right).
\end{align*}
Letting $R \rightarrow \infty$ we get the desired result.
\end{proof}

\textbf{Proof of Lemma \ref{Fatous lemma}:}
\begin{proof}
 Define $\alpha = \liminf \int_{\rt} \rho_m \ln \rho_m + \epsilon,$ where $\epsilon>0$ is a small fixed number. Let
 $\rho_{m_k}$ be a subsequence such that $\lim \int_{\rt} \rho_{m_k} \ln \rho_{m_k}=\liminf \int_{\rt} \rho_m \ln \rho_m.$
 By Lemma \ref{L1 weak convergence}, up to a subsequence $\rho_{m_k}$ converges to some $\rho$ weakly in $L^1(\rt).$
 Since for sufficiently large $k, \rho_{m_k} \in \mathcal{S},$ which is weak $L^1$-closed by Lemma \ref{weakly closed},
 we conclude that $\rho\in \mathcal{S}$ and hence
 \begin{align*}
  \int_{\rt} \rho \ln \rho\leq \liminf \int_{\rt} \rho_m \ln \rho_m + \epsilon.
 \end{align*}
Since $\epsilon>0$ is arbitrary the proof of the lemma is completed.
\end{proof}

\begin{lem} \label{asymptotic behavior}
 Let $\rho \in L^1(\rt)$ satisfies
 \begin{align*}
  \int_{\rt} \rho \ln \rho \leq C_0, \int_{\rt} \rho = \beta, \ \int_{\rt} |x|^2\rho \leq C_0.
 \end{align*}
Define
\begin{align}\label{Ulog}
 u(x) = -\frac{1}{2\pi}\int_{\rt} \ln|x-y|\rho(y) \ d^2y, \ \mbox{for} \ x \in \rt.
\end{align}
Then there exists a constants $C,R$ depending only on $C_0$ and $\beta$ such that
\begin{align*}
 \left|u(x) + \frac{\beta}{2\pi}\ln |x|\right| \leq C, \ \mbox{for all} \ |x|>R.
\end{align*}
\end{lem}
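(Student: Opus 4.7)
The plan is to rewrite the quantity we wish to control as
\[
u(x) + \frac{\beta}{2\pi}\ln|x| \;=\; -\frac{1}{2\pi}\int_{\rt}\ln\frac{|x-y|}{|x|}\,\rho(y)\,d^2y,
\]
fix $|x|>R$ with $R$ large (to be chosen), and split the integration domain into three pieces that exploit different hypotheses on $\rho$:
\[
A=\{|y|\le |x|/2\},\qquad B=\{|y|>|x|/2\}\cap\{|x-y|\ge 1\},\qquad C=\{|x-y|<1\}.
\]

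On $A$ the triangle inequality gives $|x|/2\le|x-y|\le 3|x|/2$, so $\bigl|\ln(|x-y|/|x|)\bigr|\le\ln 2$ and the contribution is controlled by $\beta\ln 2$, using only $\int\rho=\beta$.

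On $B$ the moment hypothesis $\int|y|^2\rho\le C_0$ carries the argument. Since $|x|\le 2|y|$ we have $|x-y|\le 3|y|$; together with $|x-y|\ge 1$ this gives $|\ln(|x-y|/|x|)|\le \ln|x-y|+\ln|x|\le \ln(3|y|)+\ln|x|$. By Chebyshev, $\int_B\rho\le 4C_0/|x|^2$, and by Cauchy--Schwarz,
\[
\Bigl(\int_B\rho\,\ln(3|y|)\Bigr)^{\!2}\le \Bigl(\int\rho(\ln(3|y|))^2\Bigr)\Bigl(\int_B\rho\Bigr)\le C\Bigl(\int\rho|y|^2\Bigr)\cdot\frac{4C_0}{|x|^2},
\]
which goes to $0$ as $|x|\to\infty$. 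The term $\ln|x|\int_B\rho\le 4C_0\ln|x|/|x|^2$ also vanishes.

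The main obstacle is region $C$, where $\ln|x-y|$ is singular. Here we split
\[
\int_C\rho(y)\ln\tfrac{|x-y|}{|x|}\,dy=\int_C\rho(y)\ln|x-y|\,dy-\ln|x|\!\int_C\rho.
\]
The second term is harmless because $C\subset\{|y|\ge|x|-1\}$, so $\int_C\rho\le C_0/(|x|-1)^2$ and the product with $\ln|x|$ is $o(1)$. For the singular piece $\int_C\rho(y)\,|\ln|x-y||\,dy$ the idea is to trade the pointwise blow-up of $-\ln|x-y|$ against the $L\ln L$ information on $\rho$ via the Young--Fenchel duality between $t\mapsto t\ln t-t$ and $s\mapsto e^s$: for $a\ge 0$ and $b\in\mathbb R$,
\[
ab\;\le\;a\ln a-a+e^{b}.
\]
Applied with $a=\rho(y)$ and $b=-\ln|x-y|$ this gives
\[
\int_C\rho\,|\ln|x-y||\,dy\;\le\;\int_C\bigl(\rho\ln\rho-\rho\bigr)\,dy+\int_{\{|x-y|<1\}}\frac{dy}{|x-y|},
\]
and the second integral equals $2\pi$ while the first is bounded by $\int\rho|\ln\rho|+\beta$, which in turn is controlled by a constant depending only on $C_0$ and $\beta$ thanks to Lemma \ref{boundedness}.

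Combining the three regional estimates yields a bound on $|u(x)+\frac{\beta}{2\pi}\ln|x||$ uniform in $|x|>R$, with $C$ and $R$ depending only on $C_0$ and $\beta$, as required. The key analytic idea is the third step: the logarithmic singularity of the Newton kernel in two dimensions is precisely the dual of the $L\ln L$ norm, so the hypothesis $\int\rho\ln\rho\le C_0$ is exactly what is needed to tame it.
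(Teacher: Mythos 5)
Your proposal is correct and follows essentially the same route as the paper: the same normalization of $u(x)+\frac{\beta}{2\pi}\ln|x|$ as an integral of $\ln(|x-y|/|x|)$ against $\rho$, the same three-region split (near the singularity $|x-y|<1$, the region $|y|\le|x|/2$, and the far region), and the same use of the mass for the middle region and the second moment for the far region. The only difference is cosmetic: on the singular region the paper splits according to whether $\ln\rho$ exceeds $\epsilon\ln\frac{1}{|x-y|}$, whereas you invoke the Young--Fenchel inequality $ab\le a\ln a-a+e^{b}$ directly, which is a cleaner packaging of the same entropy-versus-logarithmic-singularity tradeoff (just make sure the Cauchy--Schwarz factor $\int\rho(\ln 3|y|)^2$ on region $B$ is restricted to $B$, where $|y|>|x|/2$, so that the bound by $C\int|y|^2\rho$ is legitimate).
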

\begin{proof}
 The proof goes in the same line as in Chen and Li \cite{CL93} with slight modifications. As in \cite{CL93} we
 write
 \begin{align*}
  \frac{u(x)}{\ln |x|} + \frac{\beta}{2\pi} = -\frac{1}{2\pi} \int_{\rt} \frac{\ln|x-y| - \ln |x|}{\ln|x|}\rho(y) \ d^2y
 = I_1 + I_2 + I_3,
 \end{align*}
where the integral $I_1$ is over the domain $\{|x-y|<1\},I_2$ is over the domain $\{|x-y|>1,|y| \leq \frac{|x|}{2}\}$
and $I_3$ is over the domain $\{|x-y|>1,|y| > \frac{|x|}{2}\}.$ We want to show that each $I_j$ is bounded by
$C(\beta,C_0)/\ln |x|.$ Now
\begin{align} \label{1st int}
 |I_1| \leq \int_{\{|x-y|<1\}} \rho(y) \ d^2y + \frac{1}{\ln|x|}\int_{\{|x-y|<1\}} |\ln|x-y||\rho(y) \ d^2y
\end{align}
Since $\{|x-y|<1\} \subset \{|y|>|x|-1\},$ and $\int_{\rt} |x|^2\rho \leq C_0$ the first integral in \eqref{1st int}
is bounded by $C(\beta,C_0)/(|x|-1)^2.$ To estimate the second integral in \eqref{1st int} we divide it into two parts
$\{|x-y|>1, \rho \leq 1\}$ and $\{|x-y|>1, \rho > 1\}.$ Clearly,
\begin{align*}
 \frac{1}{\ln|x|}\int_{\{|x-y|<1, \rho \leq 1\}}
 |\ln|x-y||\rho(y) \ d^2y \leq \frac{C(\beta,C_0)}{\ln |x|}.
\end{align*}
Choose $\epsilon \in (0,1)$ then
\begin{align*}
\int_{\{|x-y|<1, \rho > 1\}} |\ln|x-y||\rho(y) \ d^2y
 &\leq \int_{\{|x-y|<1, \ln \rho < \epsilon \ln \frac{1}{|x-y|}\}} |\ln|x-y||\rho(y) \ d^2y \\
& +\int_{\{|x-y|<1, \ln \rho > \epsilon \ln \frac{1}{|x-y|}\}} |\ln|x-y||\rho(y) \ d^2y \\
&\leq \int_{\{|x-y|<1\}} \ln \frac{1}{|x-y|} e^{\epsilon \ln \frac{1}{|x-y|}} \\
&+\frac{1}{\epsilon} \int_{\{|x-y|<1\}} \rho \ln \rho \\
&\leq \int_{\{|x-y|<1\}} \ln \frac{1}{|x-y|} \frac{1}{|x-y|^{\epsilon}} \\
&+\frac{1}{\epsilon} \int_{\{|x-y|<1\}} \rho \ln \rho \\
&\leq C(\beta, C_0, \epsilon).
 \end{align*}
Combining all we get the estimate
\begin{align*}
 |I_1| \leq C(\beta, C_0)\left[\frac{1}{\ln |x|} + \frac{1}{(|x|-1)^2}\right].
\end{align*}
To estimate $I_2$ we see that on the domain $\{|x-y|>1,|y| \leq \frac{|x|}{2}\}, |\ln |x-y| - \ln|x|| \leq 1.$  Thus
\begin{align*}
 |I_2| \leq \frac{1}{\ln |x|} \int_{\{|y| \leq \frac{|x|}{2}\}} \rho(y) \ d^2y \leq \frac{C(\beta)}{\ln |x|}.
\end{align*}
Now on $I_3, \ln |x-y| \geq 0, |x-y| \leq 3|y|$ and hence $|\ln|x-y| - \ln |x||\leq \ln3|y|+ \ln |x|.$ Therefore
\begin{align*}
 |I_3| &\leq \frac{1}{\ln|x|} \int_{\{|y| > \frac{|x|}{2}\}} \ln 3 |y|\rho(y) \ d^2y +
 \int_{\{|y| > \frac{|x|}{2}\}}\rho(y) \ d^2y \\
 &\leq C(\beta, C_0) \left[\frac{1}{|x|\ln|x|} + \frac{1}{|x|^2}\right].
\end{align*}
 \end{proof}

 We end this section with the following compactness lemma whose proof can be found in \cite{ST12}.
 \begin{thma} \label{BT}
  Suppose we have a sequence $\{u_m\} \subset H^1(B(0,2R))$ of weak solutions to
  \begin{align} \label{equ1}
   -\Delta u_m = f_m, \ \mbox{in} \ B(0,2R),
  \end{align}
and $\{f_m\} \subset \mathbb{L}\ln\mathbb{L}(B(0,2R)).$ Suppose there exists a constant $C<+\infty$ such that
\begin{align} \label{con1}
 ||u_m||_{L^1(B(0,2R))} + ||f_m||_{\mathbb{L}\ln\mathbb{L} (B(0,2R))} \leq C.
\end{align}
Then there exists $u \in H^1_{\mbox{loc}}(B(0,2R))$ such that
\begin{align*}
 ||u_m - u||_{H^1(B(0,R))} \rightarrow 0, \ \mbox{as} \ m \rightarrow \infty.
\end{align*}
\end{thma}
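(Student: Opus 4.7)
The plan is to first establish uniform $H^1_{\text{loc}}(B(0,2R))$ bounds on $u_m$ from the $\mathbb{L}\ln\mathbb{L}$ control on $f_m$, extract a weak limit, and then upgrade weak $H^1$-convergence to strong $H^1(B(0,R))$-convergence by a duality/testing argument. The basic a priori estimate I would record first is the following: if $-\Delta v = g$ on $B(0,2R)$ with $v=0$ on the boundary, then $\|\nabla v\|_{L^2} \leq C\|g\|_{\mathbb{L}\ln\mathbb{L}}$. This follows from $\int |\nabla v|^2 = \int gv$, the duality bound $|\int gv|\leq \|g\|_{\mathbb{L}\ln\mathbb{L}}\|v\|_{L^M}$ (using $(\mathbb{L}\ln\mathbb{L})^{*}=L^M$ with $M(t)=e^t-t-1$), and the Moser--Trudinger inequality in the form $\|v\|_{L^M}\leq C\|\nabla v\|_{L^2}$ for $H^1_0$ functions on a bounded planar domain.

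I would then decompose $u_m = v_m + h_m$ on a slightly smaller concentric ball, where $v_m$ is the Dirichlet solution of $-\Delta v_m = f_m$ and $h_m$ is harmonic. The preceding estimate gives $\|\nabla v_m\|_{L^2}\leq C\|f_m\|_{\mathbb{L}\ln\mathbb{L}}$, and the mean value property turns the bound $\|h_m\|_{L^1}\leq \|u_m\|_{L^1}+\|v_m\|_{L^1}$ into an $L^\infty_{\text{loc}}$ (hence $H^1_{\text{loc}}$) bound. Combining these, $u_m$ is uniformly bounded in $H^1(K)$ for every compact $K\subset B(0,2R)$. By reflexivity of $H^1_{\text{loc}}$ together with the $L_M$-weak compactness of $\mathbb{L}\ln\mathbb{L}$, I pass to a subsequence with $u_m\rightharpoonup u$ weakly in $H^1_{\text{loc}}$, $u_m\to u$ strongly in $L^2_{\text{loc}}$ (Rellich) and a.e., and $f_m\rightharpoonup f$ in the $L_M$-weak sense, so that $-\Delta u = f$ distributionally.

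For strong convergence on $B(0,R)$, set $w_m = u_m - u$ and choose $\phi\in C_c^\infty(B(0,2R))$ with $\phi\equiv 1$ on $B(0,R)$ and $0\leq\phi\leq 1$. Testing $-\Delta w_m = f_m - f$ against $\phi^2 w_m$ yields
\begin{align*}
 \int \phi^2 |\nabla w_m|^2\, dx = \int \phi^2 w_m(f_m - f)\, dx - 2\int \phi w_m\,\nabla\phi\cdot\nabla w_m\, dx.
\end{align*}
The second term vanishes in the limit since $w_m\to 0$ in $L^2_{\text{loc}}$ while $\nabla w_m$ stays $L^2_{\text{loc}}$-bounded. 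For the first, I would bound $|\int \phi^2 w_m(f_m - f)| \leq \|f_m-f\|_{\mathbb{L}\ln\mathbb{L}}\|\phi^2 w_m\|_{L^M}$ and prove $\|\phi^2 w_m\|_{L^M}\to 0$: by Moser--Trudinger and the uniform $H^1_{\text{loc}}$ bound on $w_m$, the family $e^{\alpha|w_m|}-1$ is uniformly integrable on $\mbox{supp}\,\phi$ for some small $\alpha>0$; combined with $w_m\to 0$ a.e., Vitali's convergence theorem then gives $\|\phi^2 w_m\|_{L^M}\to 0$, closing the argument.

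The main obstacle is this last Vitali step: one must genuinely promote the pointwise a.e.\ convergence to convergence in the exponential Orlicz norm, and this requires the uniform exponential integrability above. The $L^1$ control on $u_m$ alone would not suffice, because then $f$ could concentrate into a Dirac mass and $H^1$-compactness would fail. The $\mathbb{L}\ln\mathbb{L}$ bound on $f_m$ is precisely what rules out such concentration, since in dimension two it is exactly the borderline $L\log L$ scale (and not $L^1$) that yields an $H^1$ estimate via the Moser--Trudinger duality. Once this mechanism is in place, the remaining bookkeeping --- choice of cutoff radii, control of the harmonic correction, and verification that the $L_M$-weak limit of $f_m$ really is $-\Delta u$ distributionally --- is routine.
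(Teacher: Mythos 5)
Your proposal is correct, but note that the paper does not actually prove this statement: it is quoted as Theorem A from \cite{ST12} (where it is established in the more general form $-\Delta u_m=\Omega_m\cdot\nabla u_m+f_m$), and the only argument supplied in the text is the scaling reduction from $B(0,2R)$ to $B(0,1)$. So you are comparing a self-contained direct proof against a citation. Your route is the standard one and it works: the duality estimate $\|\nabla v\|_{L^2}\le C\|g\|_{\mathbb{L}\ln\mathbb{L}}$ via $\int|\nabla v|^2=\int gv\le 2\|g\|_{\mathbb{L}\ln\mathbb{L}}\|v\|_{L_M}$ and the Moser--Trudinger bound $\|v\|_{L_M}\le C\|\nabla v\|_{L^2}$ on $H^1_0$; the Dirichlet-plus-harmonic splitting, with the mean value property converting the $L^1$ bound on $h_m$ into interior $C^\infty$ bounds; and the cutoff/Vitali argument upgrading weak to strong $H^1(B(0,R))$ convergence. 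Two points deserve explicit care if you write this out. First, the energy identity $\int|\nabla v_m|^2=\int f_m v_m$ for the Dirichlet part, and later the testing of $-\Delta w_m=f_m-f$ against the unbounded function $\phi^2w_m$, both need an approximation step: truncate or mollify, and use that $H^1_0(B(0,2R))\hookrightarrow L_M$ is a \emph{bounded} linear embedding (precisely your Moser--Trudinger estimate) so that convergence of test functions in $H^1_0$ with fixed compact support implies convergence of the pairings against $f_m-f\in\mathbb{L}\ln\mathbb{L}$; you also need $\|f\|_{\mathbb{L}\ln\mathbb{L}}\le\liminf\|f_m\|_{\mathbb{L}\ln\mathbb{L}}$, which follows from weak-$*$ lower semicontinuity of the Luxemburg norm. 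Second, your argument (necessarily) produces convergence only along a subsequence, whereas the statement is phrased for the full sequence; this is a defect of the statement as transcribed, not of your proof, and the subsequential version is all that is used in the paper. With those routine repairs your argument is complete and, in the case $\Omega_m\equiv 0$ relevant here, is essentially the proof one would extract from \cite{ST12}.
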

In \cite{ST12}, the authors actually proved the above compactness theorem for $R=\frac{1}{2}$ but for more general inhomogeneity
$\Omega_m \cdot \nabla u_m + f_m$ under some smallness condition on $\Omega_m.$ For our purpose we can take $\Omega_m \equiv 0,$
and the general $R$ can be dealt with through a simple scaling argument. To be meticulous, define $\tilde u_m(x)=u_m(2Rx)$
and $\tilde f_m(x) = (2R)^2f_m(2Rx).$ Then one can easily verify that \eqref{equ1},\eqref{con1} holds with $u_m,f_m$ replaced by
$\tilde u_m, \tilde f_m$ in the domain $B(0,1).$ Hence by compactness theorem there exists $\tilde u \in H^1_{loc}(B(0,1))$
such that $\tilde u_m \rightarrow \tilde u$ in $H^1(B(0,\frac{1}{2})).$ Scaling back to the original variable we see that 
$u_m(\cdot) \rightarrow u(\cdot) := \tilde u(\frac{\cdot}{2R})$ in $H^1(B(0,R)).$
We refer the reader to \cite{ST12} for more details.

\section{Existence of minimizers: sub-critical case}
In this section we assume $\bm{\beta}$ is sub-critical (Definition \ref{def1}). 

\begin{thm} \label{sub existence}
 If $\bm{\beta}$ is sub-critical  then for all $\bm{v}=(v_1, \cdots, v_n) \in (\rt)^n$ there exists
 a minimizer of $\mathcal{F}_{\bm{v}}$ on $\Gamma^{\bm{\beta}}.$
\end{thm}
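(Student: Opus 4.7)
The proof is by the direct method of the calculus of variations. Take a minimizing sequence $\bm{\rho}^m \in \Gamma^{\bm{\beta}}$ with $\mathcal{F}_{\bm{v}}(\bm{\rho}^m) \to \inf \mathcal{F}_{\bm{v}}$; by Theorem \ref{bounded from below} the infimum is finite. The first task is to establish uniform a priori bounds. Writing $\mathcal{F}_{\bm{v}} = \mathcal{F}_{\bm{v}, \bm{\alpha}} + \sum_i (1/2 - \alpha_i) \int|x-v_i|^2 \rho_i$ for some $\bm{\alpha} \in (0, 1/2)^n$ and applying Theorem \ref{bounded from below} to the first summand gives a uniform upper bound on $\sum_i \int|x-v_i|^2 \rho_i^m$. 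For the entropy, the subcriticality $\Lambda_J(\bm{\beta}) > 0$ for all $J$ yields a logarithmic-HLS type inequality for systems with a strictly positive deficit factor on the entropy; combined with the functional being bounded above, this produces a uniform upper bound on $\sum_i \int \rho_i^m \ln \rho_i^m$. Such a deficit inequality can be read off from the proof of Theorem \ref{bounded from below}, noting that the term $\Lambda_I(\bm{\beta})/(4\pi) \ln R$ appearing in \eqref{Gr} is strictly positive in the subcritical case and can absorb a fraction of the entropy.

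With these bounds, Lemma \ref{Fatous lemma} applied componentwise yields $\bm{\rho} \in \Gamma^{\bm{\beta}}$ with $\rho_i^m \rightharpoonup \rho_i$ in $\mathbb{L}\ln\mathbb{L}(\rt)$ and $\int \rho_i \ln \rho_i \leq \liminf \int \rho_i^m \ln \rho_i^m$; the mass equality $\int \rho_i = \beta_i$ is automatic from that lemma, since the second-moment bound prevents escape to infinity. Lower semicontinuity of the confining term $\frac{1}{2}\int|x-v_i|^2 \rho_i$ follows from Fatou applied to the nonnegative integrand, combined with uniform tail control from the second-moment bound.

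The main obstacle is passing to the limit in the nonlocal interaction, which we rewrite as $-\frac{1}{2}\sum_{i,j} a_{ij} \int_{\rt} \rho_i^m u_j^m$ with $u_j^m(x) := -\frac{1}{2\pi}\int \ln|x-y| \rho_j^m(y) \, d^2y$. By Lemma \ref{asymptotic behavior}, $u_j^m(x) + \frac{\beta_j}{2\pi} \ln|x|$ is uniformly bounded for $|x| \geq R$, giving uniform local $L^1$ bounds on $u_j^m$; meanwhile $-\Delta u_j^m = \rho_j^m$ is uniformly bounded in $\mathbb{L}\ln\mathbb{L}$ by Lemma \ref{boundedness}. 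Theorem \ref{BT} then produces strong $H^1_{\mathrm{loc}}$ convergence $u_j^m \to u_j$. Passage to the limit in $\int \rho_i^m u_j^m$ proceeds by splitting into a large ball $B(0,R)$---where strong convergence of $u_j^m$ combined with equiintegrability of $\{\rho_i^m\}$ coming from the uniform $\mathbb{L}\ln\mathbb{L}$ bound (via $\mathbb{L}\ln\mathbb{L}$--$L_{\exp}$ duality and de la Vall\'ee Poussin) suffices---and its exterior, where the logarithmic growth $|u_j^m(x)| \leq C \ln(1+|x|)$ is absorbed uniformly by the second-moment bound on $\rho_i^m$. Combining this convergence with the lower semicontinuity already established yields $\mathcal{F}_{\bm{v}}(\bm{\rho}) \leq \liminf \mathcal{F}_{\bm{v}}(\bm{\rho}^m) = \inf \mathcal{F}_{\bm{v}}$, so $\bm{\rho}$ is the desired minimizer.
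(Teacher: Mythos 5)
Your overall strategy is the paper's: the same decomposition $\mathcal{F}_{\bm{v}}=\mathcal{F}_{\bm{v},\bm{\alpha}}+\sum_i(\tfrac12-\alpha_i)\int|x-v_i|^2\rho_i$ for the second-moment bound, then Lemma \ref{Fatous lemma} for the entropy and mass, Lemma \ref{asymptotic behavior} for the exterior of a large ball, and Theorem \ref{BT} plus Orlicz duality for the interaction term on the ball. Those parts are fine.

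The one step where your justification does not hold up as written is the uniform upper bound on $\sum_i\int\rho_i^m\ln\rho_i^m$. You assert a ``deficit'' inequality $\mathcal{F}_{\bm{v}}(\bm{\rho})\geq\delta\sum_i\int\rho_i\ln\rho_i-C$ and claim it can be read off from \eqref{Gr} because $\Lambda_I(\bm{\beta})\ln R/(4\pi)>0$ in the subcritical case. That term lives in the rearranged radial variables $w_i(s)=m_i(e^s)$ and is a boundary contribution as $R\to\infty$; it does not multiply, and cannot directly absorb, the entropy $\int w_i'\ln w_i'$. Moreover a deficit on the entropy of \emph{every} component requires strict positivity of $\Lambda_J(\bm{\beta})$ for \emph{all} $J$, not just $J=I$ (a single component could carry all the entropy), so pointing only at the $\Lambda_I$ term cannot suffice. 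The conclusion you want is true, but the honest proof uses the openness of the subcriticality condition: either (as in the paper) replace $a_{ij}$ by $a_{ij}+\epsilon$ so that \eqref{subcritical epsilon} still holds, apply Theorem \ref{bounded from below} to the inflated functional to get $\mathcal{F}_{\bm{v}}(\bm{\rho}^m)+\frac{\epsilon}{4\pi}\sum_{i,j}I_{ij}^m\geq-C$ with $I_{ij}^m=\int\int\rho_i^m\ln|x-y|\rho_j^m$, and combine this with the upper bound $I_{ij}^m\leq\frac{C_0}{2}(\beta_i+\beta_j)$ (which follows from your Step-1 second-moment bound and $\ln|x-y|\leq\frac12\ln(1+|x|^2)+\frac12\ln(1+|y|^2)$) to conclude that all $I_{ij}^m$ are bounded and hence the entropy is bounded above; or, equivalently, write $\mathcal{F}_{\bm{v}}=\delta\sum_i\int\rho_i\ln\rho_i+(1-\delta)\mathcal{F}'$ where $\mathcal{F}'$ has matrix $A/(1-\delta)$, which is still subcritical for small $\delta$ and hence bounded below by Theorem \ref{bounded from below}. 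Either route closes the gap; once the $\mathbb{L}\ln\mathbb{L}$ bound is in hand (via Lemma \ref{boundedness}), the rest of your argument goes through as in the paper.
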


\begin{proof}
 Let $\bm{\rho}^m = (\rho_1^m,\cdots, \rho_n^m)$ be a minimizing sequence for $\mathcal{F}_{\bm{v}}$ on $\Gamma^{\bm{\beta}}.$ Since $\bm{\beta}$ is sub-critical  we can choose $\epsilon \in (0,\frac{1}{2})$ small such that
 \begin{align} \label{subcritical epsilon}
  \sum_{i \in J} \beta_i \left(8\pi - \sum_{j \in J}(a_{ij}+\epsilon)\beta_j\right) > 0, \ \mbox{for all} \
  \emptyset \neq J \subset I.
 \end{align}
 Step 1: $\int_{\rt}|x-v_i|^2\rho_i^m$ is uniformly bounded by some constant $C_0.$

By Theorem \ref{bounded from below}
\begin{align*}
 \sum_{i=1}^n\int_{\rt} \rho_i^m \ln \rho_i^m &+ \frac{1}{4\pi} \sum_{i=1}^n\sum_{j=1}^n a_{ij} \int_{\rt}\int_{\rt}
 \rho_i^m(x)\ln |x-y| \rho_j^m(y) \\
 &+ \sum_{i=1}^n (\frac{1}{2} - \epsilon)\int_{\rt}|x-v_i|^2\rho_i^m \geq -C
\end{align*}
which implies $\mathcal{F}_{\bm{v}}(\bm{\rho}^m) - \epsilon\sum_{i=1}^n \int_{\rt}|x-v_i|^2\rho_i^m \geq -C.$ 
Since along a minimizing sequence $\mathcal{F}_{\bm{v}}(\bm{\rho}^m)$ is bounded above, the conclusion of Step $1$
is proved.

Step 2: $\rho_i^m$ are uniformly bounded in $\mathbb{L}\ln\mathbb{L}$.

Define
\begin{align*}
 I_{ij}^m := \int_{\rt}\int_{\rt} \rho_i^m(x)\ln |x-y| \rho_j^m(y).
\end{align*}
Using Step $1$ and the following inequality
\begin{align*}
 \ln|x-y| \leq \frac{1}{2}\ln(1+|x|^2)+ \frac{1}{2}\ln(1+|y|^2) \ \ , \ \ |x|^2 > \ln(1+|x|^2)
\end{align*}
we see that $I_{ij}^m \leq \frac{C_0}{2}(\beta_i + \beta_j).$   
Since $\bm{\beta}$ satisfies \eqref{subcritical epsilon} we obtain by Theorem \ref{bounded from below}
\begin{align*}
\sum_{i=1}^n\int_{\rt} \rho_i^m \ln \rho_i^m &+ \frac{1}{4\pi} \sum_{i=1}^n\sum_{j=1}^n (a_{ij}+\epsilon) \int_{\rt}\int_{\rt}
 \rho_i^m(x)\ln |x-y| \rho_j^m(y) \\
 &+ \sum_{i=1}^n \frac{1}{2}\int_{\rt}|x-v_i|^2\rho_i^m \geq -C.
 \end{align*}
Therefore we have

\begin{align*}
 \mathcal{F}_{\bm{v}}(\bm{\rho}^m) + \frac{\epsilon}{4\pi} \sum_{I_{ij}^m > 0} I_{ij}^m -
 \frac{\epsilon}{4\pi} \sum_{I_{ij}^m < 0} |I_{ij}^m| \geq -C.
\end{align*}
Since along a minimizing sequence $\mathcal{F}_{\bm{v}}(\bm{\rho}^m)$ is bounded we obtain $\sum_{I_{ij}^m < 0} |I_{ij}^m|$
is uniformly bounded. Hence $\sum_{i=1}^n\int_{\rt} \rho_i^m \ln \rho_i^m$ is upper bounded and hence by Lemma \ref{boundedness} we get the uniform bound of $\bm{\rho}^m$ in $\mathbb{L}\ln\mathbb{L}$. 

Step 3: Existence of a limit.

By Lemma \ref{Fatous lemma} there exists $\rho_i \in \mathbb{L}\ln\mathbb{L}(\rt)$ such that 
up to a subsequence $\rho_i^m \rightharpoonup \rho_i$ in the topology of $\mathbb{L}\ln\mathbb{L}$ and satisfies the 
inequality
\begin{align} \label{rho1}
 \int_{\rt} \rho_i \ln \rho_i \leq \liminf \int_{\rt} \rho^m_i \ln \rho^m_i, \ \mbox{for all} \ i.
\end{align}
Furthermore, it also follows from the proof of Lemma \ref{Fatous lemma} that
\begin{align} \label{rho2}
 \sum_{i=1}^n \frac{1}{2}\int_{\rt} |x-v_i|^2\rho_i \leq \liminf \sum_{i=1}^n \frac{1}{2}\int_{\rt} |x-v_i|^2\rho_i^m,
 \ \ \ \int_{\rt} \rho_i = \beta_i,
\end{align}
and hence $\bm{\rho} :=(\rho_1, \cdots, \rho_n) \in \Gamma^{\bm{\beta}}.$ To complete the proof of the theorem we need to 
show that 
\begin{align*}
 \int_{\rt} \rho_i^mu_j^m \rightarrow \int_{\rt} \rho_iu_j \ \mbox{for all} \ 1 \leq i,j \leq n,
\end{align*}
where $u_j, u^m_j$ are defined by (\ref{Ulog}) via $\rho_j$, $\rho^m_j$ respectively. 

By Lemma \ref{asymptotic behavior} we have for $R$ large
\begin{align} \label{1}
 \left|\int_{\{|x|>R\}} \rho_i^mu_j^m\right| &\leq C \left[\int_{\{|x|>R\}} \rho_i^m \ln |x| + \int_{\{|x|>R\}}
 \rho_i^m\right] \\
 &\leq \frac{C}{R}.
\end{align}
For $\{|x|\leq R\}$ we will use Theorem \ref{BT} to prove the convergence. For that we need to show that $u_i^m \in H^1_{loc}(\rt)$
and $||u_i^m||_{L^1(B(0,2R))}$ is uniformly bounded for all $i= 1,\cdots,n:$

\begin{align*}
 \int_{\{|x|<2R\}} |u_i^m| &\leq \frac{1}{2\pi} \int_{\{|x|<2R\}}\int_{\rt}|\ln|x-y||\rho_i^m(y) \ d^2y d^2x \\
 &\leq \int_{\{|x|<2R\}}\int_{\{|y|< 4R\}}|\ln|x-y||\rho_i^m(y) \ d^2y d^2x \\
 &+ \int_{\{|x|<2R\}}\int_{\{|y|>4R\}}|\ln|x-y||\rho_i^m(y) \ d^2y d^2x \\
 & \leq \int_{\{|y|<4R\}}\rho_i^m(y) \int_{\{|x|<2R\}} |\ln|x-y||\  d^2x d^2y + C(R)\int_{\rt} |y|^2\rho_i^m(y) \ d^2y \\
 &\leq C(R).
\end{align*}

%This proves that $L^1$-norm of $u_i^m$ on $B(0,2R)$ is uniformly bounded.

%Let $v_i^m$ be the solution of
%\begin{align*}
% \begin{cases}
 % -\Delta v_i^m = \rho_i^m, \ \mbox{in} \ B(0,2R), \\
 % v_i^m = 0, \ \mbox{on} \ \partial B(0,2R).
 %\end{cases}
%\end{align*}
%Then by \cite{Stp}, $v_i^m \in H^1_0(B(0,2R)).$ Moreover, $v_i^m \in L^{\infty}(B(0,2R))$ \cite[Corollary 2.3]{AF}
%and satisfies the estimate
%\begin{align*}
% ||v_i^m||_{L^{\infty}(B(0,2R))} \leq %\frac{1}{4\pi}||\rho_i^m||_{\mathbb{L}\ln\mathbb{L}(B(0,2R))} \leq C.
%\end{align*}
%Now $u_i^m-v_i^m$ is harmonic in $B(0,2R)$ and in $L^1(B(0,2R)).$ By Weyl's lemma $u_i^m - v_i^m \in C^{\infty}(B(0,2R))$
%and hence $u_i^m \in H^1(B(0,2R)).$ 
By compactness Theorem \ref{BT}, there exists $\tilde{u}_i \in H^1(B(0,R))$ such that
$\tilde{u}_i^m$ converges to $u_i$ in $H^1(B(0,R)).$ Therefore $u_i^m$ converges to $\tilde{u}_i$ in the strong topology of
Orlicz space determined by the $N$-function $(e^t-t-1).$ By duality
\begin{align} \label{2}
 \int_{B(0,R)}\rho_i^m u_j^m \rightarrow \int_{B(0,R)} \rho_iu_j.
\end{align}
Hence by \eqref{1} and \eqref{2} we see that
\begin{align} \label{3}
 \int_{\rt}\rho_i^m u_j^m \rightarrow \int_{\rt} \rho_iu_j, \ \mbox{for all} \ i,j.
\end{align}
Therefore by \eqref{rho1}, \eqref{rho2} and \eqref{3} we have $\bm{\rho} \in \Gamma^{\bm{\beta}}$ and
\begin{align*}
 \mathcal{F}_{\bm{v}}(\bm{\rho}) \leq \liminf \mathcal{F}_{\bm{v}}(\bm{\rho}^m) = \inf_{\Gamma^{\bm{\beta}}} 
 \mathcal{F}_{\bm{v}}.
\end{align*}
This completes the proof of the theorem.
\end{proof}

\begin{rem}
 It follows from the proof of Theorem \ref{sub existence} that if a minimizing sequence is bounded in the 
 $\mathbb{L}\ln\mathbb{L}$
 topology and has bounded second moment then the minimizing sequence converges and the limit is a minimizer. More
 precisely, if $\bm{\rho}^m$ is minimizing sequence that satisfies
 \begin{align*}
  \sum_{i=1}^n\int_{\rt} \rho_i^m\ln \rho_i^m \leq C_0, \ \mbox{and} \ \sum_{i=1}^n\int_{\rt} |x|^2\rho_i^m \leq C_0
 \end{align*}
for some constant $C_0$ independent of $m$ then there exists $\bm{\rho}_0 \in \Gamma^{\bm{\beta}}$ such that
$\bm{\rho}^m \rightharpoonup \bm{\rho}_0$ in the topology of $\mathbb{L}\ln\mathbb{L}$ 
and $\mathcal{F}_{\bm{v}}(\bm{\rho}_0) = \inf_{\bm{\rho} \in \Gamma^{\bm{\beta}}}
\mathcal{F}_{\bm{v}}(\bm{\rho}).$
\end{rem}

\section{The Critical case}

Recall the definition of the functional $\mathcal{F}_{\bm{v}}(\bm{\rho})$  \eqref{Fv}. In this section we assume the critical case
\begin{equation} \label{critical} \Lambda_I(\bm{\beta})=0 \ \ , \Lambda_J(\bm{\beta})>0 \ \forall J\subset I, J\not=I,\emptyset \ . \end{equation}

\begin{lem} \label{concentration}
 If $\bm{\beta}$ satisfies \eqref{critical} then $\mathcal{F}_{0}$ does not attain its infimum
 on $\Gamma^{\bm{\beta}}.$
\end{lem}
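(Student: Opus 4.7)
The plan is to exhibit an explicit mass-preserving rescaling that strictly decreases $\mathcal{F}_{0}$ on any candidate $\bm{\rho}\in\Gamma^{\bm{\beta}}$, so that the infimum cannot be attained. Given such $\bm{\rho}$, I would consider the one-parameter family
\[\rho_i^{\lambda}(x):=\lambda^{2}\rho_i(\lambda x),\qquad \lambda>0,\ i=1,\dots,n.\]
A direct change of variables shows that $\bm{\rho}^{\lambda}\in\Gamma^{\bm{\beta}}$ for every $\lambda>0$: the masses are preserved, the entropy equals $2\beta_i\ln\lambda+\int\rho_i\ln\rho_i$ and hence is finite, and the second moment equals $\lambda^{-2}\int|x|^{2}\rho_i$.

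The first computational step is to track each of the three pieces of $\mathcal{F}_{0}$ under this rescaling. The entropy picks up an additive $2\beta_i\ln\lambda$ per index, the log-interaction of $\rho_i^{\lambda}$ with $\rho_j^{\lambda}$ picks up $-\beta_i\beta_j\ln\lambda$ (because after undoing the dilation $\ln|x-y|$ becomes $\ln|x-y|-\ln\lambda$), and the quadratic potential is scaled by $\lambda^{-2}$. Collecting the three contributions produces the scaling identity
\[\mathcal{F}_{0}(\bm{\rho}^{\lambda})=\mathcal{F}_{0}(\bm{\rho})+\frac{\Lambda_I(\bm{\beta})}{4\pi}\ln\lambda+\left(\frac{1}{\lambda^{2}}-1\right)\frac{1}{2}\sum_{i=1}^{n}\int_{\rt}|x|^{2}\rho_i\,d^{2}x,\]
where the $\ln\lambda$ coefficient simplifies exactly because $2\sum_i\beta_i-\tfrac{1}{4\pi}\sum_{i,j}a_{ij}\beta_i\beta_j=\Lambda_I(\bm{\beta})/4\pi$.

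The second step is to invoke criticality. The hypothesis \eqref{critical} forces $\Lambda_I(\bm{\beta})=0$, so the $\ln\lambda$ term vanishes and
\[\mathcal{F}_{0}(\bm{\rho}^{\lambda})-\mathcal{F}_{0}(\bm{\rho})=\left(\frac{1}{\lambda^{2}}-1\right)\frac{1}{2}\sum_{i=1}^{n}\int_{\rt}|x|^{2}\rho_i\,d^{2}x,\]
which is strictly negative for every $\lambda>1$ provided $\sum_i\int|x|^{2}\rho_i>0$. The latter is automatic: since each $\beta_i>0$, the density $\rho_i$ is a nonzero nonnegative $L^{1}$ function on $\rt$, so its support has positive two-dimensional Lebesgue measure and its second moment is strictly positive. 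Thus for every $\bm{\rho}\in\Gamma^{\bm{\beta}}$ there is a competitor $\bm{\rho}^{\lambda}\in\Gamma^{\bm{\beta}}$ with strictly smaller energy, contradicting that $\bm{\rho}$ is a minimizer.

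I do not anticipate any real obstacle; the proof reduces to a single scaling computation, and the conceptual point is that in the critical regime the entropy/log-interaction balance is exactly scale-invariant, so the confining quadratic potential alone generates a one-parameter family of strict improvements obtained by pushing the densities toward a concentration at the origin. This is the natural multi-component counterpart of the critical-mass concentration phenomenon, consistent with the non-existence of solutions to \eqref{liom} in the scalar critical case proved in \cite{CK94}.
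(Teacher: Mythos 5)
Your proof is correct and uses essentially the same mechanism as the paper: the mass-preserving dilation $\rho_i\mapsto\lambda^2\rho_i(\lambda\,\cdot)$, the cancellation of the $\ln\lambda$ term via $\Lambda_I(\bm{\beta})=0$, and the strict decrease coming from the quadratic moment. The only difference is that the paper applies the same scaling to a whole minimizing sequence in order to extract the extra fact that $\sum_i\int_{\rt}|x|^2\rho_i^m\to 0$ (which it reuses later), whereas you apply it directly to a putative minimizer, which suffices for the statement as written.
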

 
\begin{proof}
Let $\bm{\rho_m}$ be a
 minimizing sequence. Define
 \begin{align*}
  \tilde \rho_i^m(x) = R^2\rho_i^m(Rx), \ x \in \rt, R>0 \ .
 \end{align*}
Direct computation gives
\begin{align*}
 \mathcal{F}_0( \bm{\tilde \rho}_m) = \mathcal{F}_0( \bm{\rho}^m) + \left(\frac{1}{R^2} - 1\right)
 \sum_{i=1}^n \frac{1}{2}\int_{\rt}|x|^2\rho_i^m.
\end{align*}
Thus we have (using $\liminf(a_m + b_m) = \lim a_m + \liminf b_m,$ if $a_m$ converges)
\begin{align*}
 \inf_{\bm{\rho} \in \Gamma^{\bm{\beta}}} \mathcal{F}_0(\bm{\rho})
 \leq \lim \mathcal{F}_0( \bm{\rho}^m) + \liminf \left(\frac{1}{R^2} - 1\right)
 \sum_{i=1}^n \frac{1}{2}\int_{\rt}|x|^2\rho_i^m.
\end{align*}
which gives
\begin{align} \label{con}
 \liminf \left(\frac{1}{R^2} - 1\right)
 \sum_{i=1}^n \frac{1}{2}\int_{\rt}|x|^2\rho_i^m \geq 0.
\end{align}
Choosing $R<1$ in \eqref{con} we get $\liminf \sum_{i=1}^n \left(\frac{1}{2}\int_{\rt}|x|^2\rho_i^m \right)\geq 0,$ while
$R>1$ gives $\limsup \left(\sum_{i=1}^n \frac{1}{2}\int_{\rt}|x|^2\rho_i^m\right) \leq 0$ and hence
\begin{align*}
 \lim \left(\sum_{i=1}^n \frac{1}{2}\int_{\rt}|x|^2\rho_i^m\right) = 0.
\end{align*}
Therefore all the components of $\bm{\rho}^m$ concentrates at the origin and hence there does not exists a minimizer of
$\mathcal{F}_0$ on $\Gamma^{\bm{\beta}}.$
\end{proof}

\subsection{A Functional inequality:}
\begin{lem}
 The following inequality holds true
 \begin{align} \label{funct ineq}
  \inf_{\bm{\rho} \in \Gamma^{\bm{\beta}}} \mathcal{F}_{\bm{v}}(\bm{\rho})
  \leq \inf_{\bm{\rho} \in \Gamma^{\bm{\beta}}} \mathcal{F}_0(\bm{\rho}) + \min_{x_0 \in \rt} \sum_{i=1}^n
\frac{1}{2}\beta_i |x_0 - v_i|^2.
  \end{align}
\end{lem}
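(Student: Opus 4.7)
The plan is to exploit the concentration phenomenon of Lemma \ref{concentration}. Given that the critical condition is in force, any minimizing sequence $\bm{\rho}^m$ for $\mathcal{F}_0$ on $\Gamma^{\bm{\beta}}$ must satisfy $\sum_i \int_{\rt} |x|^2 \rho_i^m \to 0$. I will translate such a minimizing sequence by a common vector $x_0$ and use it as a trial configuration for $\mathcal{F}_{\bm{v}}$; the translation changes only the potential term, and concentration ensures the cross terms vanish in the limit, leaving exactly the desired quadratic expression.

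Concretely, fix $x_0 \in \rt$ and set $\tilde{\rho}_i^m(x) := \rho_i^m(x-x_0)$. Since the shift is common to all components, $\bm{\tilde{\rho}}^m \in \Gamma^{\bm{\beta}}$, the entropies $\int \tilde{\rho}_i^m \ln \tilde{\rho}_i^m$ are invariant, and the logarithmic interaction terms $\int\!\int \tilde{\rho}_i^m(x) \ln|x-y| \tilde{\rho}_j^m(y)$ are also invariant by substitution $x\mapsto x+x_0$, $y\mapsto y+x_0$. Only the quadratic potential changes: expanding
\begin{align*}
\int_{\rt} |x-v_i|^2 \tilde{\rho}_i^m(x) \, d^2x = \int_{\rt} |y|^2 \rho_i^m \, d^2y + 2(x_0-v_i)\cdot \int_{\rt} y\, \rho_i^m \, d^2y + \beta_i |x_0-v_i|^2,
\end{align*}
and subtracting off the $\int |y|^2 \rho_i^m$ term hidden in $\mathcal{F}_0(\bm{\rho}^m)$, one obtains
\begin{align*}
\mathcal{F}_{\bm{v}}(\bm{\tilde{\rho}}^m) = \mathcal{F}_0(\bm{\rho}^m) + \sum_{i=1}^n (x_0-v_i)\cdot \!\int_{\rt} y\, \rho_i^m \, d^2y + \sum_{i=1}^n \frac{1}{2}\beta_i |x_0-v_i|^2.
\end{align*}

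To pass to the limit, observe that by Cauchy--Schwarz
$\bigl|\int y\,\rho_i^m \, d^2y\bigr| \le \beta_i^{1/2}\bigl(\int |y|^2 \rho_i^m\bigr)^{1/2},$
which tends to zero by Lemma \ref{concentration}. Therefore the middle sum vanishes in the limit, and since $\mathcal{F}_0(\bm{\rho}^m) \to \inf_{\Gamma^{\bm{\beta}}} \mathcal{F}_0$, we conclude
\begin{align*}
\inf_{\bm{\rho}\in\Gamma^{\bm{\beta}}} \mathcal{F}_{\bm{v}}(\bm{\rho}) \le \lim_{m\to\infty} \mathcal{F}_{\bm{v}}(\bm{\tilde{\rho}}^m) = \inf_{\bm{\rho}\in\Gamma^{\bm{\beta}}} \mathcal{F}_0(\bm{\rho}) + \sum_{i=1}^n \frac{1}{2}\beta_i |x_0-v_i|^2.
\end{align*}
Taking the infimum over $x_0 \in \rt$ yields \eqref{funct ineq}. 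There is no real obstacle here beyond bookkeeping: the argument is essentially a change of variables, with the concentration lemma doing all the heavy lifting by simultaneously killing the second-moment term and (via Cauchy--Schwarz) the linear first-moment term.
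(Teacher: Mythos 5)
Your proposal is correct and follows essentially the same route as the paper: translate a minimizing sequence for $\mathcal{F}_0$ by $x_0$, expand the quadratic potential, and use the concentration of second moments from Lemma \ref{concentration} together with Cauchy--Schwarz to kill the linear cross term before taking the infimum over $x_0$. The only (immaterial) difference is a constant factor in the cross term, which in any case vanishes in the limit.
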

\begin{proof} 
 Let $\bm{\rho_m}$ be a
 minimizing sequence for $\inf_{\bm{\rho} \in \Gamma^{\bm{\beta}}} \mathcal{F}_0(\bm{\rho}).$ Define
 for $x_0 \in \rt,$
 \begin{align*}
  \bm{\tilde \rho}_m(x) = \bm{\rho}^m(x-x_0), \ x \in \rt.
 \end{align*}
Then a direct computation gives
\begin{align} \label{inq}
\inf_{\bm{\rho} \in \Gamma^{\bm{\beta}}} \mathcal{F}_{\bm{v}}(\bm{\rho}) \leq
 \mathcal{F}_{\bm{v}}(\bm{\tilde \rho}_m) &=  \mathcal{F}_0(\bm{\rho}^m) +
 \sum_{i=1}^n\frac{1}{2} \int_{\rt} \left(|x + x_0 - v_i| - |x|^2\right)\rho_i^m \notag \\
 &= \mathcal{F}_0(\bm{\rho}^m)
 + \sum_{i=1}^n\frac{1}{2} \int_{\rt} \langle x,x_0 -v_i\rangle\rho_i^m
 + \sum_{i=1}^n\frac{1}{2}\beta_i|x_0 - v_i|^2.
\end{align}
Since by Lemma \ref{concentration} $\lim \left(\sum_{i=1}^n \frac{1}{2}\int_{\rt}|x|^2\rho_i^m\right)= 0$
we get
\begin{align*}
 \left|\sum_{i=1}^n\frac{1}{2} \int_{\rt} \langle x,x_0 -v_i\rangle\rho_i^m\right|
 \leq \sum_{i=1}^n\frac{1}{2}\beta_i^{\frac{1}{2}}|x_0 - v_i|\left(\int_{\rt}|x|^2\rho_i^m\right)^{\frac{1}{2}}
 \rightarrow 0,
\end{align*}
as $m\rightarrow \infty.$ Therefore letting $m \rightarrow \infty$ in \eqref{inq} we get
\begin{align*}
 \inf_{\bm{\rho} \in \Gamma^{\bm{\beta}}} \mathcal{F}_{\bm{v}}(\bm{\rho})
 \leq \inf_{\bm{\rho} \in \Gamma^{\bm{\beta}}} \mathcal{F}_0(\bm{\rho}) + \sum_{i=1}^n\frac{1}{2}\beta_i|x_0 - v_i|^2.
\end{align*}
Since $x_0 \in \rt$ is arbitrary the proof of the lemma is completed.
\end{proof}

\begin{rem}
 If the equality occurs in \eqref{funct ineq} then every minimizing sequence for $\mathcal{F}_{\bm{v}}$ on $\Gamma^{\bm{\beta}}$
 is also a minimizing sequence for $\mathcal{F}_0$ on $\Gamma^{\bm{\beta}}.$
 Hence, for any such minimizing sequence we get
 $\sum_{i=1}^n \int_{\rt} \rho_i^m\ln \rho_i^m \rightarrow \infty$. Otherwise, as in Theorem \ref{sub existence} (Remark 3) we can prove the existence of a minimizer of $\mathcal{F}_0$
 on $\Gamma^{\bm{\beta}},$ which contradicts Lemma \ref{concentration}.
\end{rem}

%We expect the following if and only if condition for a minimizing sequence to be bounded.

\section{Blow up analysis: Brezis Merle type argument}
We pose  the following alternatives
\begin{prop} \label{alternatives}
 Suppose $\bm{\beta}$ satisfies \eqref{critical}, then one of the following alternative holds: either
 \begin{itemize}
  \item[(a)] there exists a minimizer of $\mathcal{F}_{\bm{v}}$ over $\Gamma^{\bm{\beta}},$ or
  \item[(b)] equality holds in the functional inequality \eqref{funct ineq}.
  \end{itemize}
\end{prop}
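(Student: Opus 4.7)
The strategy is to argue by the contrapositive: assuming that strict inequality holds in \eqref{funct ineq}, I shall produce a minimizer of $\mathcal{F}_{\bm{v}}$ on $\Gamma^{\bm{\beta}}$. Let $\bm{\rho}^m$ be a minimizing sequence. Applying Theorem \ref{bounded from below} to the perturbed functional $\mathcal{F}_{\bm{v},\bm{\alpha}}$ with $\alpha_i=\tfrac{1}{2}-\varepsilon$ for small $\varepsilon>0$ yields a uniform bound on the second moments $\sum_i\int_{\rt}|x-v_i|^2\rho_i^m\,d^2x$. By the Remark following Theorem \ref{sub existence}, it suffices to establish the upper bound $\sum_i\int_{\rt}\rho_i^m\ln\rho_i^m\leq C$ along a subsequence: Lemma \ref{Fatous lemma} combined with the compactness arguments of Theorem \ref{sub existence} then produces the desired minimizer in $\Gamma^{\bm{\beta}}$.

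Suppose, for contradiction, that $\sum_i\int\rho_i^m\ln\rho_i^m\to+\infty$ along every subsequence. Since $\mathcal{F}_{\bm{v}}(\bm{\rho}^m)$ is bounded above and the second moments are uniformly controlled, the convolution term $\sum_{i,j}a_{ij}\iint\rho_i^m(x)\ln|x-y|\rho_j^m(y)$ must diverge to $-\infty$, signalling mass concentration. A Brezis--Merle type blow-up analysis on the Newton potentials $u_i^m:=-\tfrac{1}{2\pi}\int\ln|x-y|\rho_i^m(y)\,d^2y$ (which satisfy $-\Delta u_i^m=\rho_i^m$) identifies a concentration center $x_0^m\in\rt$ and a scale $\lambda_m\to 0$, at which we rescale $\tilde{\rho}_i^m(x):=\lambda_m^2\rho_i^m(\lambda_m x+x_0^m)$. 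In the critical case the $\ln\lambda_m$ coefficient arising under this rescaling in $\mathcal{F}_0$ equals $\Lambda_I(\bm{\beta})/(4\pi)=0$; hence the non-potential part of $\mathcal{F}_0$ is scale-invariant, $\tilde{\bm{\rho}}^m$ has controlled entropy, and we extract a weak limit $\bm{\sigma}$ with masses $\beta_i^{\mathrm{loc}}:=\int\sigma_i\leq\beta_i$. Following the Liouville-system blow-up techniques of \cite{CSW,SW}, a Pohozaev identity on the limiting system for $\bm{\sigma}$ yields $\Lambda_{I'}(\bm{\beta}^{\mathrm{loc}})\leq 0$ on the support $I':=\{i:\beta_i^{\mathrm{loc}}>0\}$; combined with $\bm{\beta}^{\mathrm{loc}}\leq\bm{\beta}$, the strict positivity $\Lambda_J(\bm{\beta})>0$ for $\emptyset\neq J\subsetneq I$ from \eqref{critical}, and the monotonicity of $\Lambda_J$ in its mass arguments, this forces $I'=I$ and $\bm{\beta}^{\mathrm{loc}}=\bm{\beta}$. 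The same strict subcriticality on proper subsets rules out secondary concentration; hence $x_0^m\to x_0\in\rt$ and the entire mass concentrates at the single point $x_0$.

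Setting $\bar{\rho}_i^m(x):=\rho_i^m(x+x_0^m)$, which concentrates at the origin, we deduce $\int x\,\bar{\rho}_i^m\to 0$ and $\int x\,\rho_i^m\to\beta_ix_0$. The translation identity
\begin{align*}
\mathcal{F}_0(\bm{\rho}^m)=\mathcal{F}_0(\bar{\bm{\rho}}^m)+\sum_i\left\langle\int x\,\rho_i^m\,d^2x,\,x_0^m\right\rangle-\tfrac{1}{2}|x_0^m|^2\sum_i\beta_i,
\end{align*}
together with $\mathcal{F}_0(\bar{\bm{\rho}}^m)\geq\inf_{\Gamma^{\bm{\beta}}}\mathcal{F}_0$, then gives $\mathcal{F}_0(\bm{\rho}^m)\geq\inf_{\Gamma^{\bm{\beta}}}\mathcal{F}_0+\tfrac{1}{2}|x_0|^2\sum_i\beta_i+o(1)$. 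Combining with the algebraic identity $\mathcal{F}_{\bm{v}}-\mathcal{F}_0=\tfrac{1}{2}\sum_i\int(|x-v_i|^2-|x|^2)\rho_i^m$ and the centroid limit, one obtains
\begin{align*}
\lim_m\mathcal{F}_{\bm{v}}(\bm{\rho}^m)\geq\inf_{\Gamma^{\bm{\beta}}}\mathcal{F}_0+\tfrac{1}{2}\sum_i\beta_i|x_0-v_i|^2\geq\inf_{\Gamma^{\bm{\beta}}}\mathcal{F}_0+\min_{y\in\rt}\tfrac{1}{2}\sum_i\beta_i|y-v_i|^2,
\end{align*}
contradicting the assumed strict inequality. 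The principal obstacle is the blow-up analysis establishing the mass quantization $\Lambda_{I'}(\bm{\beta}^{\mathrm{loc}})\leq 0$ and excluding secondary concentration: both require Pohozaev identities on the rescaled Liouville-type system together with the compactness result (Theorem \ref{BT}) away from concentration points, mirroring the treatment of \cite{CSW,SW}. Once this technical core is in place, the remaining energy bookkeeping is elementary.
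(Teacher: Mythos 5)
Your overall architecture matches the paper's: a dichotomy between (i) an entropy bound along a subsequence, which via Lemma \ref{Fatous lemma} and the compactness argument of Theorem \ref{sub existence} yields a minimizer, and (ii) concentration of the full mass at a single point $x_0$, whose energy bookkeeping (your translation identity and centroid limits are correct) forces equality in \eqref{funct ineq}. The second-moment bound via $\mathcal{F}_{\bm{v},\bm{\alpha}}$ with $\alpha_i=\tfrac12-\varepsilon$ is also fine, since \eqref{critical} implies \eqref{beta condition} and Theorem \ref{bounded from below} applies.

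The genuine gap is in the technical core: you run a Brezis--Merle blow-up analysis, pass to a ``limiting system'' for the rescaled profile $\bm{\sigma}$, and invoke a Pohozaev identity to get $\Lambda_{I'}(\bm{\beta}^{\mathrm{loc}})\leq 0$ --- but your $\bm{\rho}^m$ is an arbitrary minimizing sequence in $\Gamma^{\bm{\beta}}$, which satisfies no Euler--Lagrange equation. The relation $-\Delta u_i^m=\rho_i^m$ alone gives you nothing like $\rho_i^m=e^{\sum_j a_{ij}u_j^m-|x-v_i|^2/2}$, and without that exponential structure there is no Liouville system in the limit, no Pohozaev identity, and no mechanism to exclude secondary concentration or to identify a single concentration point for all components. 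This is precisely why the paper does not work with a minimizing sequence at the critical mass: it takes sub-critical $\bm{\beta}_m\nearrow\bm{\beta}$, for which Theorem \ref{sub existence} provides \emph{exact minimizers} $\bm{\rho}^m\in\Gamma^{\bm{\beta}_m}$ solving the Euler--Lagrange (Liouville) system; the blow-up dichotomy (Cases (A)/(B)) is formulated in terms of the potentials $v_i^m=\ln\mu_i^m+\sum_j a_{ij}u_j^m$ of those solutions, and two auxiliary lemmas (Lemma \ref{moment bound}, which itself uses minimality at level $\bm{\beta}_m$ through a scaling argument with $a_m=\Lambda_I(\bm{\beta}_m)/4\pi\to 0$, and Lemma \ref{min conv} on convergence of the infima) transfer the conclusion back to $\bm{\beta}$. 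Note also that the Case (B) branch needs the pointwise identity $\rho_i^m=e^{v_i^m-|x-v_i|^2/2}$ to convert local boundedness of $v_i^m$ into local $L^\infty$ bounds on $\rho_i^m$. As written, your argument asserts the concentration alternative rather than proving it; to close the gap you would either have to develop a concentration--compactness theory for minimizing sequences without the PDE, or adopt the paper's sub-critical approximation.
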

For the  proof of this Proposition will need  the two  Lemmas below: 
\par
%\begin{proof} (of Proposition)  
Let $\bm{\beta}_m$ be a sequence such that  $\bm{\beta}_m\nearrow\bm{\beta}$ and satisfies
\begin{align*}
 \Lambda_J(\bm{\beta}_m) >0, \ \mbox{for all} \ \phi \neq J \subset I.
\end{align*}
One can indeed choose such sequence $\bm{\beta}_m,$ see for example \cite[Lemma $(5.1)$ and equation $(5.4)$]{CSW}.
By Theorem \ref{sub existence} the infimum $\inf_{\bm{\rho}\in \Gamma^{\bm{\beta}_m}} \mathcal{F}_{\bm{v}}(\bm{\rho})$
is attained. Let us denote the minimizer by
$\bm{\rho}^m  \in \Gamma^{\bm{\beta}_m}.$

\begin{lem} \label{moment bound}
 The following holds
 \begin{align*}
  \sup_m \int_{\rt}|x|^2 \rho_i^m < +\infty, \ \mbox{for all}  \  i \ .
 \end{align*}

\end{lem}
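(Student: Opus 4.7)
My plan is to exploit the minimality of $\bm{\rho}^m$ through a one-parameter dilation. For each $\lambda > 0$ set $\bm{\tilde\rho}^\lambda(x) := \lambda^2 \bm{\rho}^m(\lambda x)$. This rescaling preserves mass, so $\bm{\tilde\rho}^\lambda \in \Gamma^{\bm{\beta}_m}$ for every $\lambda > 0$, and a direct change of variables yields
\begin{multline*}
\mathcal{F}_{\bm{v}}(\bm{\tilde\rho}^\lambda) - \mathcal{F}_{\bm{v}}(\bm{\rho}^m) = \frac{\Lambda_I(\bm{\beta}_m)}{4\pi}\ln\lambda \\
+ \frac{1}{2}\Big(\frac{1}{\lambda^2} - 1\Big)\sum_{i=1}^n \int_{\rt} |y|^2 \rho_i^m\,d^2y - \Big(\frac{1}{\lambda} - 1\Big)\sum_{i=1}^n \langle M_i^m, v_i\rangle,
\end{multline*}
where $M_i^m := \int_{\rt} y\,\rho_i^m\,d^2y$. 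The logarithmic contributions of the entropy $\int \rho_i \ln \rho_i$ and of the interaction $\int\int \rho_i\rho_j \ln|x-y|$ combine via the identity $\frac{\Lambda_I(\bm{\beta}_m)}{4\pi} = 2\sum_i \beta_i^m - \frac{1}{4\pi}\sum_{ij}a_{ij}\beta_i^m\beta_j^m$, while the $\lambda$-dependence coming from the shift $v_i$ in the confining potential produces the second line. All quantities in sight are finite because $\bm{\rho}^m \in \Gamma^{\bm{\beta}_m}$ and $\mathcal{F}_{\bm{v}}(\bm{\rho}^m)$ is finite.

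Since $\bm{\rho}^m$ is a minimizer, $\lambda \mapsto \mathcal{F}_{\bm{v}}(\bm{\tilde\rho}^\lambda)$ is smooth on $(0,\infty)$ and attains its minimum at $\lambda = 1$; equating its derivative there to zero produces the Pohozaev-type identity
\begin{equation*}
S_m := \sum_{i=1}^n \int_{\rt} |y|^2 \rho_i^m\,d^2y = \frac{\Lambda_I(\bm{\beta}_m)}{4\pi} + \sum_{i=1}^n \langle M_i^m, v_i\rangle.
\end{equation*}
By Cauchy-Schwarz, $|\langle M_i^m, v_i\rangle| \leq |v_i|\sqrt{\beta_i^m}\,\sqrt{\int_{\rt}|y|^2 \rho_i^m}$, so the sum on the right is dominated by $C\sqrt{S_m}$ for a constant $C$ depending only on $n$, $\max_i |v_i|$ and $\sup_m \max_i \beta_i^m$. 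Since $\bm{\beta}_m \to \bm{\beta}$ with $\Lambda_I(\bm{\beta}) = 0$, the term $\Lambda_I(\bm{\beta}_m)/(4\pi)$ is uniformly bounded, and hence $S_m \leq A + C\sqrt{S_m}$ for fixed $A,C$ independent of $m$. This algebraic inequality forces $S_m$ to be uniformly bounded, and because $\rho_i^m \geq 0$ the individual moments $\int_{\rt}|x|^2\rho_i^m$ are then controlled by $S_m$, giving the claim.

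I do not foresee a serious obstacle: the only content of the argument is the elementary dilation identity, the first-order optimality condition, and Cauchy-Schwarz. No blow-up analysis, Brezis--Merle alternative, or direct appeal to the Euler--Lagrange system is needed at this step. The only subtlety is the presence of the shifts $v_i$, which prevents the moments $\int |y|^2 \rho_i^m$ from being read off directly from the criticality relation $\Lambda_I(\bm{\beta}_m)\to 0$; instead they are coupled through the first-moment terms $\langle M_i^m, v_i\rangle$, and it is precisely the Cauchy-Schwarz estimate that closes the loop.
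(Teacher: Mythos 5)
Your proof is correct. Both you and the paper exploit the same dilation $\rho\mapsto\lambda^2\rho(\lambda\,\cdot)$, which preserves $\Gamma^{\bm{\beta}_m}$, and the same scaling identity in which the entropy and interaction terms contribute $\frac{\Lambda_I(\bm{\beta}_m)}{4\pi}\ln\lambda$; but you extract the conclusion differently. The paper writes $\mathcal{F}_{\bm{v}}(\bm{\tilde\rho}^\lambda)-\mathcal{F}_{\bm{v}}(\bm{\rho}^m)=f_m(\lambda)\ge 0$, picks $R_m$ to be the \emph{global} minimizer of $f_m$ on $(0,\infty)$, and runs a contradiction argument: assuming $b_m=\sum_i\int|x|^2\rho_i^m\to\infty$ it estimates $a_mR_m\lesssim b_m^{1/2}$ from the stationarity equation and shows $f_m(R_m)<0$ for large $m$. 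You instead observe that $f_m$ is an explicit smooth function with $f_m(1)=0$ and $f_m\ge 0$, so $f_m'(1)=0$, which yields the exact virial identity $S_m=\frac{\Lambda_I(\bm{\beta}_m)}{4\pi}+\sum_i\langle M_i^m,v_i\rangle$; Cauchy--Schwarz then gives $S_m\le A+C\sqrt{S_m}$ and hence a uniform bound. Your route is shorter and arguably cleaner --- it avoids the contradiction and the asymptotic bookkeeping with $a_m\ln b_m$, and it delivers more information (an exact formula for the total second moment rather than mere boundedness) --- while the paper's version is the same idea executed through the optimized scale $R_m$. Both are sound; the only hypotheses you use (finiteness of each term of $\mathcal{F}_{\bm{v}}(\bm{\rho}^m)$ and of the first moments) follow from $\bm{\rho}^m\in\Gamma^{\bm{\beta}_m}$ exactly as in the paper.
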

\begin{proof}
 For each fixed $m$ define
 \begin{align*}
  \tilde \rho_i^m(x) = R_m^2\rho_i^m(R_mx), \
 \end{align*}
where $R_m>0$ to be decided later. A direct computations gives
\begin{align*}
 \mathcal{F}_{\bm{v}}(\bm{\tilde \rho}_m) =  \mathcal{F}_{\bm{v}}(\bm{\rho}^m) + f_m(R_m),
\end{align*}
where $f_m:(0,\infty) \rightarrow \mathbb{R}$ is defined by
\begin{align*}
 f_m(t)= a_m \ln t + \frac{b_m}{2t^2} + \frac{2c_m}{t} + d_m,
\end{align*}
and $a_m,b_m,c_m,d_m$ are defined as follows:
\begin{align*}
 a_m = \frac{1}{4\pi}\Lambda_I(\bm{\beta}_m) \rightarrow 0, \ \ 2c_m = -\sum_{i=1}^n \int_{\rt}\langle x , v_i\rangle \rho_i^m \notag \\
 b_m = \sum_{i=1}^n \int_{\rt}|x|^2\rho_i^m, \ \ d_m = -\frac{1}{2}\sum_{i=1}^n \int_{\rt}|x-v_i|^2\rho_i^m+
 \frac{1}{2} |v_i|^2\beta_i^m.
\end{align*}
One can easily verify that the following inequalities hold:
\begin{align}\label{cdestm}
 |c_m| &\leq \left(\sup_i \frac{\sqrt{n}}{2}|v_i|\beta_i^{\frac{1}{2}}\right) b_m^{\frac{1}{2}}, \\
 -\frac{1}{4}\sum_{i=1}^n|v_i|^2\beta_i^m - &d_m \leq b_m \leq -4d_m + 4 \sum_{i=1}^n|v_i|^2\beta_i^m.
\end{align}
Since $\bm{\rho}^m$ minimizes $\mathcal{F}_{\bm{v}}$ over
$\Gamma^{\bm{\beta}_m}$ we have
\begin{align*}
 \inf_{\bm{\rho} \in \Gamma^{\bm{\beta}_m}} \mathcal{F}_{\bm{v}}(\bm{\rho}) \leq
 \mathcal{F}_{\bm{v}}(\bm{\tilde \rho}_m) &=  \mathcal{F}_{\bm{v}}(\bm{\rho}^m) + f_m(R_m)
 =\inf_{\bm{\rho} \in \Gamma^{\bm{\beta}_m}} \mathcal{F}_{\bm{v}}(\bm{\rho}) + f_m(R_m),
\end{align*}
and therefore $f_m(R_m) \geq 0.$ We now choose $R_m$ such that $f_m(R_m)$ is the  minimum of $f_m(t)$ over $(0,\infty).$
Since $f_m(t) \rightarrow \infty$ as $t \rightarrow 0$ and $t \rightarrow\infty$ we have $R_m>0$ and satisfies
$f_m^{\prime}(R_m) = 0,$ i.e.,
\begin{align}\label{Rm}
 \frac{a_m}{R_m} - \frac{b_m}{R_m^3}-\frac{2c_m}{R_m^2}=0 \ . 
\end{align}
Therefore we have from \eqref{Rm}
\begin{align} \label{cm}
 f_m(R_m) &= a_m\ln R_m + a_m - \frac{b_m}{2R_m^2} + d_m \notag\\
 &=a_m\ln a_mR_m - a_m\ln a_m + a_m - \frac{b_m}{2R_m^2} + d_m.
\end{align}
Now assume, in contradiction, that $b_m \rightarrow \infty.$ Then we deduce from \eqref{Rm} and the bound on $c_m$ (\ref{cdestm}) that
$a_mR_m \leq Cb_m^{\frac{1}{2}},$ for some positive constant $C$ independent of $m$ and hence we have
\begin{align} \label{above}
 a_m\ln a_m R_m \leq a_m \ln C + \frac{a_m}{2}\ln b_m,
\end{align}
 Since $b_m \geq 0,$ \eqref{cm} and \eqref{above} and the estimates of $d_m$ in terms of $b_m$ gives
\begin{align*}
 f_m(R_m) &\leq (a_m \ln C - a_m\ln a_m + a_m) + \frac{a_m}{2}\ln b_m + d_m \\
 &\leq o(1) + \frac{a_m}{2}\ln b_m - \frac{b_m}{4} + \sum_{i=1}^n |v_i|^2\beta_i^m \\
 &= O(1) + \frac{a_m}{2}\ln b_m - \frac{b_m}{4} <0,
\end{align*}
for large $m.$ This contradicts the fact that $f_m(R_m) \geq 0,$ and hence the proof of the lemma is completed.
\end{proof}

\begin{lem} \label{min conv}
The followings hold true:
\begin{itemize}
 \item[(a)]
 \begin{align} \label{inf ineq1}
 \lim_{m\rightarrow \infty} \inf_{\bm{\rho} \in \Gamma^{\bm{\beta}_m}} \mathcal{F}_{\bm{v}}(\bm{\rho})
 \leq \inf_{\bm{\rho} \in \Gamma^{\bm{\beta}}} \mathcal{F}_{\bm{v}}(\bm{\rho})
 \end{align}
 \item[(b)]
\begin{align} \label{inf ineq2}
 \lim_{m\rightarrow \infty} \inf_{\bm{\rho} \in \Gamma^{\bm{\beta}_m}} \mathcal{F}_0(\bm{\rho})
 = \inf_{\bm{\rho} \in \Gamma^{\bm{\beta}}} \mathcal{F}_0(\bm{\rho})
\end{align}
\end{itemize}
\end{lem}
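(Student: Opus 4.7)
Both parts rest on a single idea: linear rescaling of densities by the ratios $c_i^m := \beta_i^m/\beta_i$, with $c_i^m \to 1$ since $\bm{\beta}_m \nearrow \bm{\beta}$.

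For (a), fix $\bm{\rho} \in \Gamma^{\bm{\beta}}$ with $\mathcal{F}_{\bm{v}}(\bm{\rho}) < \infty$, and set $\rho_i^m := c_i^m \rho_i \in \Gamma^{\bm{\beta}_m}$. A term-by-term computation gives
\begin{align*}
\mathcal{F}_{\bm{v}}(\bm{\rho}^m) - \mathcal{F}_{\bm{v}}(\bm{\rho}) &= \sum_i \beta_i^m \ln c_i^m + \sum_i (c_i^m - 1) \int_{\rt} \rho_i \ln \rho_i \\
&\quad + \frac{1}{4\pi} \sum_{i,j} a_{ij} (c_i^m c_j^m - 1) I_{ij}(\bm{\rho}) + \sum_i \frac{c_i^m - 1}{2} \int_{\rt} |x - v_i|^2 \rho_i,
\end{align*}
where $I_{ij}(\bm{\rho}) := \int_{\rt}\!\int_{\rt} \rho_i(x) \ln|x-y| \rho_j(y)$ is finite (its positive part is controlled by the finite moments via $\ln|x-y|\le \tfrac12\ln(1+|x|^2)+\tfrac12\ln(1+|y|^2)$, and its negative part by finiteness of $\mathcal{F}_{\bm{v}}(\bm{\rho})$ minus the entropy and moment pieces). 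Each summand vanishes as $m\to\infty$, so $\mathcal{F}_{\bm{v}}(\bm{\rho}^m) \to \mathcal{F}_{\bm{v}}(\bm{\rho})$. Hence $\limsup_m \inf_{\Gamma^{\bm{\beta}_m}} \mathcal{F}_{\bm{v}} \le \mathcal{F}_{\bm{v}}(\bm{\rho})$, and infimizing over $\bm{\rho}$ proves (a). Specializing to $\bm{v} = 0$ gives the upper-bound half $\limsup_m \inf_{\Gamma^{\bm{\beta}_m}} \mathcal{F}_0 \le \inf_{\Gamma^{\bm{\beta}}} \mathcal{F}_0$ of (b).

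For the matching lower bound in (b), I reverse the rescaling. Let $\bm{\rho}^m$ be the minimizer of $\mathcal{F}_0 = \mathcal{F}_{\bm{v}=0}$ on $\Gamma^{\bm{\beta}_m}$ provided by Theorem \ref{sub existence}, and set $\tilde\rho_i^m := (\beta_i/\beta_i^m)\rho_i^m \in \Gamma^{\bm{\beta}}$. The analogous identity expresses $\mathcal{F}_0(\tilde{\bm{\rho}}^m) - \mathcal{F}_0(\bm{\rho}^m)$ as a finite linear combination of factors $c_i^{-m}-1$, $c_i^{-m}c_j^{-m}-1$, and $\ln c_i^{-m}$ multiplied against the three integrals $\int \rho_i^m \ln \rho_i^m$, $I_{ij}(\bm{\rho}^m)$, and $\int|x|^2 \rho_i^m$. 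The error tends to $0$ provided these integrals stay uniformly bounded in $m$. The second moment bound is exactly Lemma \ref{moment bound} applied with $\bm{v} = 0$ (its proof goes through unchanged: the stationarity condition reduces to $R_m^2 = b_m/a_m$, and $f_m(R_m)\ge 0$ still forces $b_m$ bounded). For entropy and interaction, note that $\mathcal{F}_0(\bm{\rho}^m) = \inf_{\Gamma^{\bm{\beta}_m}} \mathcal{F}_0$ is bounded above (by part (a)) and below (by Theorem \ref{bounded from below}); applying Theorem \ref{bounded from below} to the perturbed functional $\mathcal{F}_{0,(1/2-\epsilon)}$ on $\Gamma^{\bm{\beta}_m}$ yields $\mathcal{F}_0(\bm{\rho}^m) - \epsilon \sum_i \int|x|^2 \rho_i^m \ge -C$, so combined with the moment bound one obtains a uniform bound on $\sum_i \int \rho_i^m \ln \rho_i^m + (1/4\pi)\sum_{ij}a_{ij} I_{ij}(\bm{\rho}^m)$. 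Splitting $\ln|x-y|$ into $\ln^{\pm}$ and using $\ln^+|x-y| \le \ln(1+|x|^2)+\ln(1+|y|^2)$ then separates these two pieces. With these bounds, $\mathcal{F}_0(\tilde{\bm{\rho}}^m) - \mathcal{F}_0(\bm{\rho}^m) = o(1)$, so
\begin{align*}
\inf_{\Gamma^{\bm{\beta}}} \mathcal{F}_0 \le \mathcal{F}_0(\tilde{\bm{\rho}}^m) = \mathcal{F}_0(\bm{\rho}^m) + o(1) = \inf_{\Gamma^{\bm{\beta}_m}} \mathcal{F}_0 + o(1),
\end{align*}
which closes (b).

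The main technical point is the \emph{uniformity in $m$} of the lower-bound constant from Theorem \ref{bounded from below} when applied to $\mathcal{F}_{0,(1/2-\epsilon)}$ on $\Gamma^{\bm{\beta}_m}$: as $\bm{\beta}_m \to \bm{\beta}$ with $\Lambda_I(\bm{\beta}_m) \searrow 0$, one must verify that the constants hidden in the decomposition $G_R = J_{-\infty} + J_{\infty} + E_R + o_R(1)$ used in that proof depend continuously on $\bm{\beta}$ near the critical value. This is a finite-dimensional continuity check but is where most of the bookkeeping will reside.
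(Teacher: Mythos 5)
Part (a) of your proposal is fine: your multiplicative rescaling $\rho_i^m = (\beta_i^m/\beta_i)\rho_i$ is a legitimate variant of the paper's truncation $\rho_i\chi_{B(0,R_i^m)}$, and for a \emph{fixed} $\bm{\rho}\in\Gamma^{\bm{\beta}}$ with finite energy all the integrals you multiply by $c_i^m-1$ are finite, so the error vanishes. The same remark covers the upper-bound half of (b).

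The lower-bound half of (b) has a genuine gap, and it is exactly the point the paper's construction is designed to get around. Your rescaling $\tilde\rho_i^m=(\beta_i/\beta_i^m)\rho_i^m$ produces error terms of the form $(\beta_i/\beta_i^m-1)\int_{\rt}\rho_i^m\ln\rho_i^m$, and you need $\int_{\rt}\rho_i^m\ln\rho_i^m$ \emph{uniformly bounded in $m$} (in particular bounded above, since the coefficient is positive). This is false. Step (2) of the paper's proof (the adaptation of Lemma \ref{concentration}) shows that the minimizers $\bm{\rho}^m$ of $\mathcal{F}_0$ on $\Gamma^{\bm{\beta}_m}$ concentrate: $\int_{\rt}|x|^2\rho_i^m\rightarrow 0$ while $\int_{\rt}\rho_i^m=\beta_i^m\rightarrow\beta_i>0$. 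The Gaussian entropy bound then forces $\int_{\rt}\rho_i^m\ln\rho_i^m\geq \beta_i^m\ln\bigl((\beta_i^m)^2/(\pi\int_{\rt}|x|^2\rho_i^m)\bigr)-\beta_i^m\rightarrow+\infty$ (this is also the content of the Remark after the functional inequality: in the critical case every minimizing sequence for $\mathcal{F}_0$ has entropy tending to $+\infty$). So your error term is of the form $0^+\cdot(+\infty)$ with no control on the rate, and the argument you give for the needed bound does not deliver it: bounding $\ln^+|x-y|$ by the second moments gives an \emph{upper} bound on the interaction term and hence only a \emph{lower} bound on the entropy, never the upper bound you need. (Similarly $I_{ij}(\bm{\rho}^m)\rightarrow-\infty$ under concentration, though that error has the harmless sign.) The paper avoids this by using an \emph{additive} perturbation $\tilde\rho_i^m=\rho_i^m+(\beta_i-\beta_i^m)\phi$ with a fixed bump $\phi$: the individual entropies still blow up, but the \emph{difference} $\int\tilde\rho_i^m\ln\tilde\rho_i^m-\int\rho_i^m\ln\rho_i^m$ is controlled via the mean value theorem together with the pointwise decay estimates $\sup_r r^4\rho_i^m(r)=o_m(1)$ and $r^2\rho_i^m(r)\leq\beta_i^m/\pi$ coming from radial monotonicity. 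To repair your proof you would have to replace the multiplicative rescaling in this half by some such additive device; the continuity-in-$\bm{\beta}$ bookkeeping you flag at the end is a secondary issue by comparison.
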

\begin{proof}
We first prove inequality \eqref{inf ineq1}. Let $\bm{\rho}  \in \Gamma^{\bm{\beta}}$ be a
fixed element. Choose $R_i^m >0$ such that $\int_{B(0,R_i^m)} \rho_i = \beta_i^m$ and define
 $\rho_i^m = \rho_i \chi_{B(0,R_i^m)}.$ Then $\bm{\rho}^m \in \Gamma^{\bm{\beta}_m}$ and
 by dominated convergence theorem
 \begin{align*}
  \lim_{m \rightarrow \infty} \mathcal{F}_{\bm{v}}(\bm{\rho}^m) = \mathcal{F}_{\bm{v}}(\bm{\rho}).
 \end{align*}
Thus we have
\begin{align*}
 \lim_{m \rightarrow \infty} \inf_{\bm{\rho}\in \Gamma^{\bm{\beta}_m}}\mathcal{F}_{\bm{v}}(\bm{\rho})
 \leq \lim_{m \rightarrow \infty} \mathcal{F}_{\bm{v}}(\bm{\rho}^m) = \mathcal{F}_{\bm{v}}(\bm{\rho}).
\end{align*}
Since $\bm{\rho} \in \Gamma^{\bm{\beta}}$ is arbitrary, we have proved the inequality \eqref{inf ineq1}.
Next we prove \eqref{inf ineq2}. Thanks to \eqref{inf ineq1}, we only need to show
$\lim_{m\rightarrow \infty} \inf_{\bm{\rho} \in \Gamma^{\bm{\beta}_m}} \mathcal{F}_0(\bm{\rho})
 \geq \inf_{\bm{\rho} \in \Gamma^{\bm{\beta}}} \mathcal{F}_0(\bm{\rho}).$ This step is a little bit technical and
therefore we divide the proof into several parts.

 (1) By Theorem \ref{sub existence}, there exists $\bm{\rho}^m  \in \Gamma^{\bm{\beta}_m}$ such that
 \begin{align*}
  \mathcal{F}_0(\bm{\rho}^m) = \inf_{\bm{\rho} \in \Gamma^{\bm{\beta}_m}} \mathcal{F}_0(\bm{\rho}).
 \end{align*}
Furthermore, $\rho_i^m$ are radially symmetric and decreasing function of $r=|x|.$ By abuse of notations, we will
also denote the radial function by $\rho_i^m(r).$

(2) A simple adoption of the proof of Lemma \ref{concentration} gives $\int_{\rt} |x|^2\rho_i^m(x) \rightarrow 0$
as $m \rightarrow \infty.$ Therefore for any $r \in (0,\infty)$
\begin{align*}
 o_m(1) = \int_{\rt} |x|^2\rho_i^m(x) = 2\pi \int_0^{\infty} s^3\rho_i^m(s) \ ds
 \geq 2\pi \int_0^r s^3\rho_i^m(s) \ ds \geq \frac{\pi}{2}r^4 \rho_i^m(r),
\end{align*}
where $o_m(1)$ denotes a quantity going to $0$ as $m \rightarrow \infty.$
Thus we have $\sup_{r \in (0,\infty)} r^4\rho_i^m(r) = o_m(1)$ as $m \rightarrow \infty.$

 A similar argument using $\int_{\rt} \rho_i^m = \beta_i^m$ gives $\sup_{r \in (0,\infty)} r^2\rho_i^m(r)
\leq \frac{\beta_i^m}{\pi}.$

(3) Let $\phi$ be a smooth, nonnegative, radial, compactly supported function such that $\int_{\rt} \phi = 1.$ Define
$\epsilon_i^{(m)} = \beta_i - \beta_i^m >0$ and
\begin{align*}
 \tilde \rho_i^m(x) = \rho_i^m(x) + \epsilon_i^{(m)} \phi(x), \ x \in \rt .
\end{align*}
Then $\bm{\tilde \rho}_m  \in \Gamma^{\bm{\beta}}$ for all $m$ and hence
$\inf_{\bm{\rho} \in \Gamma^{\bm{\beta}}} \mathcal{F}_0(\bm{\rho}) \leq
\mathcal{F}_0(\bm{\tilde \rho}_m).$ Now we will estimate each term of $\mathcal{F}_0(\bm{\tilde \rho}_m)$
and show that
\begin{align*}
 \mathcal{F}_0(\bm{\tilde \rho}_m) = \mathcal{F}_0(\bm{\rho}^m) + o_m(1).
 \end{align*}
(4)
\begin{align} \label{q1}
\int_{\rt} \tilde \rho_i^m \ln \tilde \rho_i^m - \int_{\rt} \rho_i^m \ln \rho_i^m = o_m(1).
\end{align}
\
Let us denote by $k_m := \max_{1\leq i \leq n}\max\{\sup_{r \in (0,\infty)} r^4\rho_i^m(r), \sup_{r \in (0,\infty)} r^4\tilde\rho_i^m(r)\},$
then using $(2)$ and $\epsilon_i^{(m)} \rightarrow 0,$ we see that $k_m \rightarrow 0.$
Let $\delta_m$ be a sequence such that $\delta_m \rightarrow 0$ and
$k_m\ln k_m/\delta_m^3 \rightarrow 0.$ Clearly we have
\begin{align*}
 \int_{B(0,\delta_m)} (\tilde \rho_i^m \ln \tilde \rho_i^m -  \rho_i^m \ln \rho_i^m) \chi_{\{\rho_i^m \leq 2\}} = o_m(1),
\end{align*}
because $t\ln t$ is bounded on any compact subset of $[0,\infty).$ Now using mean value theorem we get
\begin{align*}
 &\int_{B(0,\delta_m)} (\tilde \rho_i^m \ln \tilde \rho_i^m -  \rho_i^m \ln \rho_i^m) \chi_{\{\rho_i^m > 2\}} \\
 =& 2\pi \epsilon_i^{(m)}  \int_0^{\delta_m}\int_0^1 r \left[1 + \ln (\rho_i^m(r) + t\epsilon_i^{(m)}\phi(r))\right]
 \phi(r) \chi_{\{\rho_i^m > 2\}} \ dtdr \\
 =& o_m(1) + 2\pi \epsilon_i^{(m)}\int_0^1\int_0^{\delta_m} r \ln (\rho_i^m(r) + t\epsilon_i^{(m)}\phi(r))
 \phi(r) \chi_{\{\rho_i^m > 2\}} \ drdt
\end{align*}
On the set $\{\rho_i^m > 2\},$ we have $\rho_i^m(r) + t\epsilon_i^{(m)}\phi(r) >1$ for $m$ large enough. Moreover, using the estimate
of (2) we see that $\rho_i^m(r) + t\epsilon_i^{(m)}\phi(r) \leq \frac{C}{r^2}$ where $C$ is some positive constant. Therefore
$ 0 \leq r\ln (\rho_i^m(r) + t\epsilon_i^{(m)}\phi(r)) \leq r \ln \frac{C}{r^2}$ and hence
\begin{align*}
 2\pi \epsilon_i^{(m)}\int_0^1\int_0^{\delta_m} r \ln (\rho_i^m(r) + t\epsilon_i^{(m)}\phi(r))
 \phi(r) \chi_{\{\rho_i^m > 2\}} \ drdt = o_m(1),
\end{align*}
which gives
\begin{align*}
 \int_{B(0,\delta_m)} (\tilde \rho_i^m \ln \tilde \rho_i^m -  \rho_i^m \ln \rho_i^m) \chi_{\{\rho_i^m > 2\}}= o_m(1).
\end{align*}

 Now let us estimate $\int_{B(0,\delta_m)^c} \rho_i^m \ln \rho_i^m.$
\begin{align*}
 \left|\int_{B(0,\delta_m)^c} \rho_i^m \ln \rho_i^m \right|&=
 \left|2\pi \int_{\delta_m}^{\infty} r \rho_i^m(r) \ln \rho_i^m (r) \ dr \right|\\
 &\leq 2\pi \int_{\delta_m}^{\infty} \frac{|r^4\rho_i^m \ln (r^4\rho_i^m)|}{r^3} \ dr
 + 8\pi \int_{\delta_m}^{\infty} \frac{r^4\rho_i^m |\ln r| }{r^3} \ dr \\
 &\leq 2\pi |k_m\ln k_m| \int_{\delta_m}^{\infty} \frac{dr}{r^3}
 + 8 \pi k_m \int_{\delta_m}^{\infty} \frac{|\ln r|}{r^3} dr \\
 & \leq \frac{2\pi |k_m\ln k_m|}{\delta_m^2} + C \frac{k_m}{\delta_m^{2- \epsilon}}, \ \mbox{for some $\epsilon >0$}\\
 &=o_m(1).
 \end{align*}
 In an entirely similar way we can verify that $\left|\int_{B(0,\delta_m)^c} \tilde\rho_i^m \ln \tilde\rho_i^m \right| = o_m(1),$
 and hence we have proved \eqref{q1}.

(5) Next we estimate
\begin{align}\label{q2}
 &\int_{\rt} \int_{\rt}\tilde \rho_i^m (x) \ln |x-y| \tilde \rho_j^m(y) \notag\\
 =&
  \int_{\rt} \int_{\rt} \rho_i^m (x) \ln |x-y|  \rho_j^m(y)
  + \epsilon_i^m  \int_{\rt} \int_{\rt} \phi (x) \ln |x-y| \rho_j^m(y) \notag \\
  &+ \epsilon_j^m  \int_{\rt} \int_{\rt} \rho_i^m (x) \ln |x-y|  \phi(y)
  + \epsilon_i^m \epsilon_j^m  \int_{\rt} \int_{\rt} \phi(x) \ln |x-y| \tilde \phi(y) \notag \\
  =&\int_{\rt} \int_{\rt} \rho_i^m (x) \ln |x-y|  \rho_j^m(y) + o_m(1).
\end{align}
Where we have used the fact that $|\int_{\rt} \ln|x - y|\phi(y) \ d^2y | \leq C(1 + \ln (1+|x|))$ for
all $x \in \rt.$

(7) Finally we have
\begin{align} \label{q3}
 \int_{\rt} |x|^2 \tilde \rho_i^m(x) = \int_{\rt} |x|^2  \rho_i^m(x) + o_m(1).
\end{align}
Combining \eqref{q1}, \eqref{q2} and \eqref{q3} we get
\begin{align*}
 \inf_{\bm{\rho} \in \Gamma^{\beta}}  \mathcal{F}_0(\bm{\rho})\leq
 \mathcal{F}_0(\bm{\tilde \rho}_m) = \mathcal{F}_0(\bm{\rho}^m) + o_m(1) =
 \inf_{\bm{\rho} \in \Gamma^{\bm{\beta}_m}}  \mathcal{F}_0(\bm{\rho}) + o_m(1).
\end{align*}
Letting $m \rightarrow \infty,$ we reach at the desired conclusion. This completes the proof of the lemma.
\end{proof}

\subsection{Proof of Proposition \ref{alternatives}}

Recall that $\bm{\rho}^m$ is a minimizer of $\mathcal{F}_{\bm{v}}$ over $\Gamma^{\bm{\beta}_m}$, where $\bm{\beta}_m\nearrow\bm{\beta}$.  Define the Newtonian
potentials
\begin{align*}
 u_i^m(x) =
 -\frac{1}{2\pi}\int_{\rt} \ln |x-y|\rho_i^m(y) \ d^2y, \ x \in \rt.
\end{align*}
By variational principle and Lemma \ref{moment bound}, $u_i^m$ satisfies the following equation:
\begin{align*}
\begin{cases}
-\Delta u_i^m(x) = \mu_i^m e^{\sum_{j=1}^na_{ij}u_j^m(x) - \frac{1}{2}|x-v_i|^2}, \ \mbox{in} \ \rt,\\
\mu_i^m \int_{\rt}e^{\sum_{j=1}^na_{ij}u_j^m - \frac{1}{2}|x-v_i|^2} = \beta_i^m,\\
\mu_i^m \int_{\rt}|x|^2e^{\sum_{j=1}^na_{ij}u_j^m - \frac{1}{2}|x-v_i|^2} \leq C_0,
\end{cases}
\end{align*}
where $C_0$ is a constant independent of $m.$ Define
\begin{align*}
v_i^m(x) = \ln \mu_i^m + \sum_{j=1}^na_{ij}u_j^m(x), \ x \in \rt.
\end{align*}
Let us consider the two cases:
\par \noindent
\textbf{Case (A):} Suppose there exists $R>0$ such that
\begin{align} \label{sup}
\max_{1 \leq i \leq n} \sup_{x\in B(0,R)} v_i^m(x) \rightarrow \infty, \ \mbox{as} \ m \rightarrow \infty.
\end{align}
\noindent
\textbf{Case (B):} For any $R>0$ there exists a constant $C(R)$ such that
\begin{align*}
\max_{1 \leq i \leq n}\sup_{x \in B(0,R)} v_i^m(x) \leq C(R).
\end{align*}

%Let  $y_0$ be the unique minimizer of $y\rightarrow \frac{1}{2}\sum_{i=1}^n\beta_i|y-v_i|^2$ over $\rt.$
We first prove:
\begin{lem}
	Under the assumption of Case (A), the following equality holds:
	\begin{align} \label{funct ineq 3}
	\inf_{\bm{\rho} \in \Gamma^{\bm{\beta}}} \mathcal{F}_{\bm{v}}(\bm{\rho})
	= \inf_{\bm{\rho} \in \Gamma^{\bm{\beta}}} \mathcal{F}_0(\bm{\rho}) + \min_{x_0 \in \rt} \sum_{i=1}^n
	\frac{1}{2}\beta_i |x_0 - v_i|^2.
	\end{align}
\end{lem}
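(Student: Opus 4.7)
The plan is to establish, under Case (A), the matching lower bound
\begin{equation*}
\liminf_{m\to\infty} \mathcal{F}_{\bm{v}}(\bm{\rho}^m) \;\geq\; \inf_{\Gamma^{\bm{\beta}}} \mathcal{F}_0 \;+\; \min_{x_0 \in \rt} \sum_{i=1}^n \tfrac{1}{2}\beta_i |x_0 - v_i|^2,
\end{equation*}
and then to sandwich $\inf_{\Gamma^{\bm{\beta}}} \mathcal{F}_{\bm{v}}$ to the claimed value by combining this with the functional inequality (\ref{funct ineq}) and Lemma \ref{min conv}(a). The forward direction (\ref{funct ineq}) is already available, so the entire content of the lemma reduces to producing this lower bound via concentration analysis on the sequence $\bm{\rho}^m$.

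The first step, which I expect to be the heart of the argument, is a Brezis--Merle type blow-up analysis for the system satisfied by $(u_1^m,\ldots,u_n^m)$. The hypothesis of Case (A) provides a point $x_0^m \in \overline{B(0,R)}$ where $\max_i v_i^m$ blows up; the local $H^1$-compactness of Theorem \ref{BT} fails at this point, so some $\rho_i^m$ must concentrate there. Using the criticality condition \eqref{critical}, together with the moment bound of Lemma \ref{moment bound} (which prevents escape of mass to infinity), one concludes that \emph{all} components concentrate at the \emph{same} single point $x_0 := \lim x_0^m$: indeed, any proper sub-collection $J \subsetneq I$ carries sub-critical mass since $\Lambda_J(\bm\beta) > 0$, ruling out a blow-up profile living on $J$ alone, and the total mass $\bm\beta$ is just critical so there is room for only one bubble. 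Moreover, rescaling the blow-up profile by the natural bubble scale $\lambda_m \to 0$ and comparing with the classical entire solution of the limiting Liouville system yields the sharp tightness
\begin{equation*}
\int_{\rt} |x - x_0|^2 \rho_i^m(x)\, d^2x \;=\; o_m(1), \qquad 1 \leq i \leq n.
\end{equation*}

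The second step is a direct computation exploiting translation invariance. Setting $\tilde{\rho}_i^m(x) := \rho_i^m(x + x_0) \in \Gamma^{\bm{\beta}_m}$, the entropy and logarithmic interaction are preserved, so
\begin{equation*}
\mathcal{F}_{\bm{v}}(\bm{\rho}^m) - \mathcal{F}_0(\tilde{\bm{\rho}}^m) \;=\; \sum_{i=1}^n \tfrac{1}{2}\int_{\rt}\bigl(|x - v_i|^2 - |x - x_0|^2\bigr)\rho_i^m \,d^2x.
\end{equation*}
Expanding $|x - v_i|^2 = |x-x_0|^2 + 2\langle x-x_0, x_0-v_i\rangle + |x_0-v_i|^2$ and applying Cauchy--Schwarz to the cross term, the tightness from Step 1 gives
\begin{equation*}
\mathcal{F}_{\bm{v}}(\bm{\rho}^m) \;=\; \mathcal{F}_0(\tilde{\bm{\rho}}^m) \;+\; \sum_{i=1}^n \tfrac{1}{2}\beta_i^m |x_0 - v_i|^2 \;+\; o_m(1).
\end{equation*}

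Finally, since $\tilde{\bm{\rho}}^m \in \Gamma^{\bm{\beta}_m}$ we have $\mathcal{F}_0(\tilde{\bm{\rho}}^m) \geq \inf_{\Gamma^{\bm{\beta}_m}} \mathcal{F}_0$, which by Lemma \ref{min conv}(b) tends to $\inf_{\Gamma^{\bm{\beta}}} \mathcal{F}_0$; using $\beta_i^m \to \beta_i$ and $\sum_i \tfrac{1}{2}\beta_i |x_0-v_i|^2 \geq \min_{y} \sum_i \tfrac{1}{2}\beta_i |y-v_i|^2$, we obtain the desired lower bound on $\liminf_m \mathcal{F}_{\bm v}(\bm\rho^m)$. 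Lemma \ref{min conv}(a) then yields $\inf_{\Gamma^{\bm\beta}} \mathcal{F}_{\bm v} \geq \liminf_m \mathcal{F}_{\bm v}(\bm\rho^m)$, and the reverse inequality is precisely (\ref{funct ineq}), forcing equality throughout. The main obstacle is unquestionably Step 1: while the single-species case is classical, one must carefully adapt the Brezis--Merle argument to the system with potentials $|x-v_i|^2/2$ (harmless on compact sets, but requiring uniform control at infinity via Lemma \ref{moment bound}) and, crucially, upgrade mere weak concentration $\rho_i^m \rightharpoonup \beta_i \delta_{x_0}$ to the second-moment tightness used in Step 2.
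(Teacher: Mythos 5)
Your overall architecture --- a blow-up analysis forcing all components to concentrate at a single point, followed by an energy comparison after recentering and the final sandwich via Lemma \ref{min conv} and \eqref{funct ineq} --- is the same as the paper's. The genuine gap is in the bridge between your two steps: you assert the sharp tightness $\int_{\rt}|x-x_0|^2\rho_i^m\,d^2x=o_m(1)$, and your Step 2 (Cauchy--Schwarz on the cross term) genuinely needs it as stated, but the justification you sketch does not deliver it. The blow-up argument controls the profile at scale $\delta_m$ near the blow-up point and shows that for every $\epsilon>0$ a ball of radius $M\delta_m$ eventually captures mass $\geq\beta_i-\epsilon$; it says nothing about \emph{where} the residual mass of order $o_m(1)$ sits. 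A residue of mass $\epsilon_m\to 0$ located at distance $\epsilon_m^{-1/2}$ from $x_0$ is compatible with weak concentration and with the uniform bound of Lemma \ref{moment bound}, yet contributes $O(1)$ to $\int_{\rt}|x-x_0|^2\rho_i^m$. Ruling this out would need a further argument (presumably exploiting minimality of $\bm{\rho}^m$), which you do not supply --- and you yourself flag this ``upgrade'' as the crucial unresolved step in your last sentence.

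The paper circumvents the issue entirely. From concentration it extracts only the one-sided, lower-semicontinuity bound $\liminf_m\int_{\rt}|x-v_i|^2\rho_i^m\geq\beta_i|y_0-v_i|^2$ (its \eqref{fun ineq}), which is all that weak concentration plus the moment bound actually give. It then compares not with a pure translate but with the dilated-and-translated competitor $\tilde{\bm{\rho}}_m(x)=\delta^{-2}\bm{\rho}^m(x/\delta-y_0)$ for a fixed small $\delta$: the dilation costs $-\frac{\Lambda_I(\bm{\beta}_m)}{4\pi}\ln\delta\to 0$ as $m\to\infty$ (criticality of the limiting $\bm{\beta}$ is used here) and produces a second moment of size $O(\delta^2)$ uniformly in $m$, so the inequality $\inf_{\Gamma^{\bm{\beta}_m}}\mathcal{F}_0\leq\mathcal{F}_0(\tilde{\bm{\rho}}_m)$ followed by $\delta\to 0$ closes the sandwich using only the one-sided bound. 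If you want to keep your pure-translation comparison, the cross term can in fact be salvaged without sharp tightness: split $\int_{\rt}\langle x-x_0,x_0-v_i\rangle\rho_i^m$ over $B(x_0,r)$ and its complement, bound the inner part by $r\beta_i|x_0-v_i|$ and the outer part by $|x_0-v_i|\left(\int_{B(x_0,r)^c}\rho_i^m\right)^{1/2}\left(\int_{\rt}|x-x_0|^2\rho_i^m\right)^{1/2}$, which tends to $0$ for each fixed $r$; the $\int|x-x_0|^2\rho_i^m$ term itself is absorbed exactly into $\mathcal{F}_0(\tilde{\bm{\rho}}^m)$, so no tightness is required. As written, however, the proposal rests on an unproved claim.
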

%\begin{lem} \label{fun ineq}
%	Under the assumption of Case (A), the following holds
%	\begin{align*}
%	\lim_{m\rightarrow \infty}\int_{\rt} |x-v_i|^2\rho_i^m(x) \geq \beta_i|y_0-v_i|^2, \ \mbox{for all} \ 1 \leq i \leq n.
%	\end{align*}
%\end{lem} 

\begin{proof}

By definition $v_i^m, 1 \leq i \leq n$ satisfies the equation
\begin{align*}
 \begin{cases}
  -\Delta v_i^m(x) = \sum_{j=1}^na_{ij} e^{v_j^m(x) - \frac{1}{2}|x-v_i|^2}, \ \mbox{in} \ \rt,\\
   \int_{\rt}e^{v_i^m - \frac{1}{2}|x-v_i|^2} = \beta_i^m,\\
  \int_{\rt}|x|^2e^{v_i^m - \frac{1}{2}|x-v_i|^2} \leq C_0.
 \end{cases}
\end{align*}

Furthermore, the following relation holds:
\begin{align} \label{rhouv}
 \rho_i^m(x) =
 \mu_i^m e^{\sum_{j=1}^na_{ij}u_j^m(x) - \frac{1}{2}|x-v_i|^2}
 = e^{v_i^m(x) - \frac{1}{2}|x-v_i|^2}, \ x \in \rt .
 \end{align}
After passing to a subsequence if necessary  we may assume the supremum in \eqref{sup}
is attained by $v_1^m$ for all $m.$ That is, there exists $x_m \in \overline{B(0,R)}$ such that
\begin{align*}
v_1^m(x_m) = \max_{i} \sup_{x\in B(0,R)} v_i^m(x) \rightarrow \infty, \ \mbox{as} \ m \rightarrow \infty.
\end{align*}
Let $x_m \rightarrow x_0$ for some $x_0 \in \overline{B(0,R)},$ and choose a $\tilde R >0$ large enough so that
$\overline{B(0,R)} \subset B(x_0,\tilde R).$ Since $v_1^m(x_m) \rightarrow \infty$ we have
\begin{align} \label{sup1}
 \sup \{v_i^m(x) + 2 \ln (\tilde R - |x-x_0|) \ : \ x \in B(x_0,\tilde R), 1 \leq i \leq n\} \rightarrow \infty, \
 \mbox{as} \ m \rightarrow \infty.
\end{align}
Again after passing to a subsequence me may assume $y_m \in B(x_0,\tilde R)$ be the point and $i_0$ be the index such
that the supremum in \eqref{sup1} is attained for all $m.$ Since $2 \ln (\tilde R - |x-x_0|)$ is
bounded above on $ B(x_0,\tilde R)$ we have $v_{i_0}^m(y_m) \rightarrow \infty.$

Define $\delta_m = e^{-\frac{v_{i_0}^m(y_m)}{2}},$ then $\delta_m \rightarrow 0$ and it follows from \eqref{sup1} that
\begin{align} \label{def}
 \left(\frac{\tilde R - |y_m - x_0|}{\delta_m}\right) \rightarrow \infty, \ \mbox{as} \ m \rightarrow \infty.
\end{align}
Now define
\begin{align*}
 \tilde v_i^m(x) = v_i^m(y_m + \delta_m(x-x_0)) + 2 \ln \delta_m.
\end{align*}
We note that $\tilde v_{i_0}^m(x_0) = 0$ for all $m.$  Furthermore, it follows from \eqref{def} that for any $M >0$
fixed and $x \in B(x_0,M), y_m + \delta_m(x-x_0) \in B(x_0, \tilde R)$ for large $m.$ Now $\tilde v_i^m(x)$ satisfies
the equation
\begin{align}\label{222}
 \begin{cases}
  -\Delta \tilde v_i^m(x) = \sum_{j=1}^n a_{ij} e^{\tilde v_j^m(x) - \frac{1}{2}|y_m + \delta_m(x-x_0) -v_j|^2} \ \mbox{in}
  \ \rt, \\
  \int_{\rt} e^{\tilde v_i^m(x) - \frac{1}{2}|y_m + \delta_m(x-x_0) -v_i|^2} = \beta_i^m.
 \end{cases}
\end{align}

Let $y_m \rightarrow y_0 \in \overline{B(x_0, \tilde R)}.$
Since $\tilde v_{i_0}^m(x_0) = 0$ either  $\tilde v_i^m$ converges to some $\tilde v_i$ in $C^0_{loc}(\rt)$ for all $i$  or
$\tilde v_i^m$ converges to $-\infty$
uniformly on compact subsets of $\rt$ for some $i\not=i_0$.

Let $I^{'}\subset I$ is the set of indices such that  $\tilde v_i\not=-\infty$ iff $i\in I^{'}$. Then
$\tilde v_{i}^m$ converges to $\tilde v_{i}$ in $C^0_{loc}(\rt)$ for $i\in I^{'}$
and, by \eqref{222}
\begin{align} \label{n}
 \begin{cases}
  -\Delta \tilde v_i = \sum_{j\in I^{'}}a_{ij} e^{\tilde v_j - \frac{1}{2}|y_0-v_i|^2} \  \ \mbox{in} \ \rt,\\
   \int_{\rt}e^{\tilde v_i - \frac{1}{2}|y_0-v_i|^2} = \tilde \beta_i, \
  \end{cases}
\end{align}
Letting $z_i(x)=\tilde v_i(x)-\frac{1}{2}|y_0-v_i|^2$ we obtain
\begin{align} \label{z}
 \begin{cases}
  -\Delta z_i = \sum_{j\in I^{'}}a_{ij} e^{z_j } \  \ \mbox{in} \ \rt,\\
   \int_{\rt}e^{z_i} = \tilde \beta_i  \ .
  \end{cases}
\end{align}

holds for $i\in I^{'}$ for some $\tilde \beta_i \leq \beta_i.$

 A necessary condition for the existence of solution to \eqref{z} is
$\Lambda_{I^{'}}(\bm{\tilde\beta}) =0$ (\cite{CSW}, see also \cite{LZ11,PT14}). Since we assumed $\Lambda_I(\bm{\beta})=0$ this implies $I^{'}=I$ and $\tilde{\bm{\beta}}=\bm\beta$.
 (see \cite{CSW}).

%\begin{proof}
% Define $\tilde \rho_i^m(x) = \delta_m^2\rho_i^m(y_m + \delta_m(x-x_0))$ and $\tilde \rho_i(x) =
% e^{z_i(x)}$ for $x \in \rt.$
% Then by Case (A) and the relation \eqref{rhouv} we
% see that $\tilde \rho_i^m(x)$ converges to
% $\tilde \rho_i(x)$ locally uniformly in $\rt$ for all $i$ and $\int_{\rt}\tilde \rho_i = \beta_i.$ For simplicity,
% let us denote the function $|x-v_i|^2$ by $\phi_i(x).$
%Fix $R>0$ then
%\begin{align}\label{limit}
% \int_{\rt} \phi_i(x)\rho_i^m(x) - \beta_i\phi_i(y_0) &=  \int_{\rt} \phi_i(y_m + \delta_m(x-x_0))\tilde \rho_i^m(x)
% - \beta_i\phi_i(y_0)\notag \\
% &\geq
% \int_{\{|x-x_0|\leq R\}} \phi_i(y_m + \delta_m(x-x_0))\tilde\rho_i^m(x) - \beta_i\phi_i(y_0) \notag \\
% &= \int_{\{|x-x_0|\leq R\}} \phi_i(y_m + \delta_m(x-x_0))\tilde \rho_i^m(x) - \phi_i(y_0)\int_{\{|x-x_0|\leq R\}}\tilde\rho_i \notag\\
% &\ \ \ - \phi_i(y_0)\int_{\{|x-x_0|> R\}}\tilde\rho_i \notag\\
% &=\int_{\{|x-x_0| \leq R\}} \tilde \rho_i^m(x) \left(\phi_i(y_m + \delta_m(x-x_0)) - \phi_i(y_0)\right) \notag\\
% & + \phi_i(y_0)\left[\int_{\{|x-x_0|\leq R\}}(\tilde \rho_i^m(x)- \tilde \rho_i)\right]
% - \phi_i(y_0)\int_{\{|x-x_0|> R\}}\tilde\rho_i
%\end{align}
%Letting $m\rightarrow \infty$ and then $R\rightarrow \infty$ we see that the right hand side of \eqref{limit}
%tends to 0. This completes the proof of the lemma.
%\end{proof}

%\begin{proof}
 It follows that, in Case (A), $\rho_i^m$ concentrates at some point $y_0 \in \rt.$ In particular
 \begin{align} \label{fun ineq}
 \lim_{m\rightarrow\infty} \int_{\rt}|x-v_i|^2\rho_i^m(x)d^2x\geq \beta_i|y_0-v_i|^2 
 \  \ \text{for all} \ 1\leq i\leq n \ . 
 \end{align}
 We want to show that $y_0$
 is the global minima of $\sum_{i=1}^n
\frac{1}{2}\beta_i |x - v_i|^2$ on $\rt.$ Let us define $\bm{\tilde{\rho}}_m$ as
\begin{align*}
 \bm{\tilde{\rho}}_m(x) = \frac{1}{\delta^2}\bm{\rho}^m\left(\frac{x}{\delta} - y_0\right).
\end{align*}
Then
\begin{align*}
 \mathcal{F}_0(\bm{\tilde \rho}_m) &= \mathcal{F}_{\bm{v}}(\bm{\rho}^m) - \frac{\Lambda_{I}(\bm{\beta}_m)}{4\pi}\ln \delta
 + \delta^2 \sum_{i=1}^n \frac{1}{2}\int_{\rt} |x+y_0|^2\rho_i^m \\
 &- \sum_{i=1}^n \frac{1}{2}\int_{\rt} |x-v_i|^2\rho_i^m.
\end{align*}
Therefore we obtain
\begin{align*}
 \inf_{\bm{\rho} \in \Gamma^{\bm{\beta}_m}} \mathcal{F}_0(\bm{\rho})
 \leq \mathcal{F}_{\bm{v}}(\bm{\rho}^m) - \frac{\Lambda_{I}(\bm{\beta}_m)}{4\pi}\ln \delta
 + \delta^2 O(1)
 &- \sum_{i=1}^n \frac{1}{2}\int_{\rt} |x-v_i|^2\rho_i^m.
 \end{align*}

Letting $m\rightarrow \infty$ and using \eqref{fun ineq} and Lemma \ref{min conv}(b) we get
\begin{align*}
 \inf_{\bm{\rho} \in \Gamma^{\bm{\beta}}} \mathcal{F}_0(\bm{\rho}) \leq
 \inf_{\bm{\rho} \in \Gamma^{\bm{\beta}}} \mathcal{F}_{\bm{v}}(\bm{\rho})
 + \delta^2 O(1)
 -\sum_{i=1}^n \frac{1}{2}\beta_i |y_0-v_i|^2.
\end{align*}
Since $\delta>0$ is arbitrary, by \eqref{funct ineq} we get $y_0$
 is the global minima of $\sum_{i=1}^n
\frac{1}{2}\beta_i |x - v_i|^2$ on $\rt$ and \eqref{funct ineq 3} holds true.
\end{proof}

\begin{lem}
Under the assumption of Case (B) there exists a minimizer of $\mathcal{F}_{\bm{v}}$ in	$ \Gamma^{\bm{\beta}}$. In particular
%For every minimizing sequence an if and only if condition that the sequence is bounded in $L\ln L$ norm is
	\begin{align} \label{funct ineq 2}
	\inf_{\bm{\rho} \in \Gamma^{\bm{\beta}}} \mathcal{F}_{\bm{v}}(\bm{\rho})
	< \inf_{\bm{\rho} \in \Gamma^{\bm{\beta}}} \mathcal{F}_0(\bm{\rho}) + \min_{x_0 \in \rt} \sum_{i=1}^n
	\frac{1}{2}\beta_i |x_0 - v_i|^2.
	\end{align}
	
\end{lem}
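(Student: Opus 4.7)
The plan is to extract a subsequential limit of the sequence $\bm{\rho}^m$ of sub-critical minimizers (for $\bm{\beta}_m \nearrow \bm{\beta}$) and check it is a minimizer for the critical data, after which the strict inequality drops out of the Remark following \eqref{funct ineq}. The Case (B) hypothesis will be the mechanism that prevents concentration, and the bounded second moments supplied by Lemma \ref{moment bound} will prevent mass escaping to infinity.

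First I would exploit the identity $\rho_i^m(x) = e^{v_i^m(x) - \frac{1}{2}|x-v_i|^2}$, which together with Case (B) yields the local pointwise upper bound
\begin{equation*}
\rho_i^m(x) \leq C(R)\,e^{-\frac{1}{2}|x-v_i|^2} \quad \text{on } B(0,R),
\end{equation*}
for each $R>0$. This rules out any Dirac concentration of $\rho_i^m$. Combined with the uniform bound $\int_{\rt} |x|^2 \rho_i^m \leq C_0$ from Lemma \ref{moment bound}, the family $\{\rho_i^m\}$ is tight, so along a subsequence it converges weakly in $L^1(\rt)$ to some $\rho_i$ with $\int \rho_i = \beta_i$ and $\int |x|^2 \rho_i \leq C_0$ (via Lemma \ref{L1 weak convergence}, whose hypotheses I will verify next).

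The second task is to obtain a uniform upper bound on $\int \rho_i^m \ln \rho_i^m$, which is needed both to apply Lemma \ref{L1 weak convergence}/Lemma \ref{Fatous lemma} and to place the limit $\bm{\rho}$ in $\Gamma^{\bm{\beta}}$. Because $\mathcal{F}_{\bm{v}}(\bm{\rho}^m) = \inf_{\Gamma^{\bm{\beta}_m}} \mathcal{F}_{\bm{v}}$ is bounded above (by Lemma \ref{min conv}(a)) and the second moments are bounded, a verbatim repetition of Step 2 of the proof of Theorem \ref{sub existence}, using an $\epsilon$-perturbation of the matrix as in \eqref{subcritical epsilon} valid for all large $m$, delivers the required entropy bound. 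Lemma \ref{Fatous lemma} then gives the lower semicontinuity of the entropy piece, and Fatou yields the lower semicontinuity of the weighted second moment $\sum_i \int |x-v_i|^2 \rho_i^m$. For the interaction term I would argue exactly as in Step 3 of the proof of Theorem \ref{sub existence}: Lemma \ref{asymptotic behavior} controls $u_j^m$ in the far field, and the compactness Theorem B applied to $-\Delta u_j^m = \rho_j^m$ (whose right-hand sides are uniformly bounded in $\mathbb{L}\ln\mathbb{L}$ by Lemma \ref{boundedness}) gives $u_j^m \to u_j$ in $H^1_{\mathrm{loc}}$ and hence strong convergence in the Orlicz dual, so that $\int \rho_i^m u_j^m \to \int \rho_i u_j$. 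Putting the three estimates together,
\begin{equation*}
\mathcal{F}_{\bm{v}}(\bm{\rho}) \leq \liminf_{m\to\infty} \mathcal{F}_{\bm{v}}(\bm{\rho}^m) = \lim_{m\to\infty} \inf_{\Gamma^{\bm{\beta}_m}} \mathcal{F}_{\bm{v}} \leq \inf_{\Gamma^{\bm{\beta}}} \mathcal{F}_{\bm{v}},
\end{equation*}
where the last inequality is Lemma \ref{min conv}(a). Since $\bm{\rho} \in \Gamma^{\bm{\beta}}$, this shows $\bm{\rho}$ is a minimizer.

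For the strict inequality \eqref{funct ineq 2} I would argue by contradiction: if equality held in \eqref{funct ineq}, the Remark after that inequality forces $\sum_i \int \rho_i^m \ln \rho_i^m \to \infty$ for every minimizing sequence of $\mathcal{F}_{\bm{v}}$ on $\Gamma^{\bm{\beta}}$. Taking the constant sequence equal to the minimizer $\bm{\rho}$ just produced gives a bounded entropy, a contradiction. The main obstacle is really the mass-preservation step: verifying that the pointwise local upper bound from Case (B) together with the tightness from the second moment bound is strong enough to promote the vague limit of $\rho_i^m$ to a genuine $L^1$-weak limit of total mass exactly $\beta_i$, with no loss of mass to concentration or to infinity; once this is secured, every subsequent step is a direct transcription of the sub-critical argument.
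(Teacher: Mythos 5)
There is a genuine gap at the single most important step: the uniform upper bound on $\sum_i\int_{\rt}\rho_i^m\ln\rho_i^m$. You propose to obtain it by a ``verbatim repetition of Step 2 of the proof of Theorem \ref{sub existence}, using an $\epsilon$-perturbation of the matrix as in \eqref{subcritical epsilon} valid for all large $m$.'' That step is exactly what breaks down in the critical case: for $J=I$ the perturbed quantity equals
\begin{equation*}
\sum_{i\in I}\beta_i^m\Bigl(8\pi-\sum_{j\in I}(a_{ij}+\epsilon)\beta_j^m\Bigr)
=\Lambda_I(\bm{\beta}_m)-\epsilon\Bigl(\sum_{i\in I}\beta_i^m\Bigr)^2 ,
\end{equation*}
and since $\Lambda_I(\bm{\beta}_m)\rightarrow\Lambda_I(\bm{\beta})=0$, no $\epsilon>0$ makes this positive uniformly in $m$; the admissible $\epsilon_m$ shrinks to $0$ and the resulting entropy bound degenerates like $1/\epsilon_m$. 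So the entropy control cannot come from sub-criticality of the full masses, and without it you cannot invoke Lemma \ref{Fatous lemma}, Lemma \ref{boundedness}, or Theorem \ref{BT}, nor conclude that the weak limit carries the full mass $\beta_i$. This is precisely where the Case (B) hypothesis must be used globally, not just to rule out local concentration: the paper splits $\bm{\rho}^m$ into $\bm{\rho}^m\chi_{B(0,R)}$, whose entropy is controlled directly by the $L^\infty$ bound coming from $\rho_i^m=e^{v_i^m-\frac12|x-v_i|^2}$, and the tail $\hat{\bm{\rho}}^R_m=\bm{\rho}^m\chi_{B(0,R)^c}$, whose total mass is below $\bm{\beta}/2$ for $R$ large (by the second moment bound) and which is therefore \emph{uniformly sub-critical}; the decomposition \eqref{yy} together with the boundedness of $\mathcal{F}_{\bm{v}}(\bm{\rho}^m)$ and the cross-term estimate \eqref{bbd rho2} then forces $\mathcal{F}_{\bm{v}}(\hat{\bm{\rho}}^R_m)=O(1)$, and only for this small-mass tail does the sub-critical coercivity yield the entropy bound. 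Your local pointwise bound $\rho_i^m\leq C(R)e^{-\frac12|x-v_i|^2}$ on $B(0,R)$ says nothing about the entropy contribution from $B(0,R)^c$, so as written the limit object is not shown to lie in $\Gamma^{\bm{\beta}}$ and lower semicontinuity of the entropy is not available.

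The remainder of your argument is sound and matches the paper: once the uniform $\mathbb{L}\ln\mathbb{L}$ bound and second moment bound are in hand, the passage to the limit is a transcription of Step 3 of Theorem \ref{sub existence} combined with $\lim_m\inf_{\Gamma^{\bm{\beta}_m}}\mathcal{F}_{\bm{v}}\leq\inf_{\Gamma^{\bm{\beta}}}\mathcal{F}_{\bm{v}}$ from Lemma \ref{min conv}(a), and your derivation of the strict inequality \eqref{funct ineq 2} from the Remark following \eqref{funct ineq} (a minimizer is a minimizing sequence with bounded entropy) is correct.
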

\begin{proof}
Under this assumption, we have from \eqref{rhouv} that $||\rho_i^m||_{L^{\infty}(B(0,R))} \leq C_0,$ for some constant $C_0$
independent of $m.$ In the proof $C_0$ will stand for some universal constant independent of $m$
but may depend on $R.$ Then
\begin{align} \label{bbd rho1}
 \left|\sum_{i=1}^n \int_{B(0,R)} \rho_i^m(x) \ln \rho_i^m(x) \ d^2x\right| \leq C_0.
\end{align}

Now let
\begin{align*}
 \tilde u_i^m(x) := -\frac{1}{2\pi} \int_{\rt} \ln|x-y| \rho_i^m(y)\chi_{B(0,R)}(y) \ d^2y,
\end{align*}
then it follows from Lemma \ref{asymptotic behavior} (using the fact $||\rho_i^m||_{L^{\infty}(B(0,R))} \leq C_0$) that
\begin{align*}
 |\tilde u_i^m(x)| \leq
 \begin{cases}
  C_0, \  \ \ \ \ \ \ \ \ \ \ \ \ \ \ \ \mbox{if} \ |x| \leq 1,\\
  C_0(1+\ln|x|), \ \mbox{if} \ |x| > 1.
 \end{cases}
\end{align*}
Thus we have
\begin{align} \label{bbd rho2}
 \left|\int_{B(0,R)^c} \int_{B(0,R)} \rho_i^m(x) \ln |x-y| \rho_j^m(y) \ d^2y d^2x\right|
 &\leq \int_{\rt} \rho_i^m (x)|\tilde u_i^m(x) |\ d^2x \notag \\
 &\leq C_0 \left[\int_{\rt} \rho_i^m \ d^2x + \int_{\{|x|>1\}} \ln|x|\rho_i^m  \ d^2x\right] \notag \\
 &\leq C_0 \left[\beta_i^m + \int_{\rt} |x|^2\rho_i^m  \ d^2x\right]
 \leq C_0.
\end{align}
Let us define $\hat{\bm{\rho}}_m^R(x) = \bm{\rho}^m(x)\chi_{B(0,R)^c}(x)$.
Let
 \begin{multline}\label{xx}
 \mathcal{F}_{\bm{v},R}(\bm{\rho}):=\sum_{i=1}^n\int_{B(0,R)}\rho_i\ln\rho_i+ \frac{1}{4\pi} \sum_{i=1}^n\sum_{j=1}^na_{ij} \int_{B(0,R)} \int_{B(0,R)} \rho_i^m(x) \ln |x-y| \rho_j^m(y)
\\ + \frac{1}{2}\sum_{i=1}^n\int_{B(0,R)}|x-v_i|^2\rho_i \ . \end{multline}

We can write $\mathcal{F}_{\bm{v}}(\bm{\rho}^m)$
as
\begin{multline}\label{yy}
\mathcal{F}_{\bm{v}}(\bm{\rho}^m)=\mathcal{F}_{\bm{v},R}(\bm{\rho}^m)+ \mathcal{F}_{\bm{v}}(\hat{\bm{\rho}}^R_m) \\
+ \frac{1}{2\pi}\sum_{i=1}^n\sum_{j=1}^n a_{ij}\int_{B(0,R)}\int_{B(0,R)^c}\rho_i^m(x)\ln|x-y|\rho_j^m(y)d^2xd^2y \ .
\end{multline}
Since $\|\bm{\rho}^m\|_{L^{\infty}(B(0,R))}\leq C_0$ we obtain that $\mathcal{F}_{\bm{v},R}(\bm{\rho}^m)=O(1)$. Also, \eqref{bbd rho2} implies that the second line in \eqref{yy} is $O(1)$ as well. 
Since $\mathcal{F}_{\bm{v}}(\bm{\rho}^m)$ is a bounded sequence (as $\bm{\rho}^m$ is a minimizer of $\inf_{\Gamma^{\bm{\beta}_m}}
\mathcal{F}_{\bm{v}},$ see Lemma \ref{min conv}(a)) this implies that
%$\mathcal{F}_{\bm{v}}(\hat{\bm{\rho}}^R_m)=O(1)$ as well. In particular, $\mathcal{F}_{0}(\hat{\bm{\rho}}^R_m)\leq 
%\mathcal{F}_{\bm{v}}(\hat{\bm{\rho}}^R_m)$ so
\begin{equation}\label{uim}  \mathcal{F}_{\bm{v}}(\hat{\bm{\rho}}^R_m)=O(1) \end{equation}
 uniformly in $m$.
\par
Next, observe that we can choose $R$ large enough for which $\int_{\rt}\hat{\bm{\rho}}^R_m<\bm{\beta}/2$.
Indeed, since $\int_{\rt}|x|^2\rho_i^m\leq C$ then $\int_{\{|x|>R\}}\rho_i^m\leq R^{-2}\int_{\{|x|>R\}}|x|^2\rho_i^m\leq C/R^2$. For such $R$, $\hat{\bm{\rho}}_m$ is sub-critical, uniformly in $m$, thus
$$ \mathcal{F}_{\bm{v}}(\hat{\bm{\rho}}^R_m)\geq C\sum_{i=1}^n\int_{\rt} \hat\rho_i^m\ln\hat\rho_i^m \ . $$
From \eqref{uim} we obtain that $\hat{\bm{\rho}}_m^R$ has a uniform bound in $\mathbb{L}\ln\mathbb{L}$. 
Since $\|\bm{\rho}^m\|_{L^{\infty}(B(0,R))}=O(1)$ by assumption we obtain that $\bm{\rho}^m$ is bounded in $\mathbb{L}\ln\mathbb{L}$ as well.

Proceeding as in the sub critical case (Theorem \ref{sub existence}, see Remark 3) we can prove the existence of a minimizer
of $\mathcal{F}_{\bm{v}}$ over $\Gamma^{\bm{\beta}}.$
\end{proof}

\subsection{Case of \ \texorpdfstring{$Var(v_1, \ldots, v_n)$} \ \ large: Proof of Theorem \ref{main}-d}

According to Proposition \ref{alternatives} we only have to exclude case A. We show it in the case $n=2$. The general case follows similarly. 
\begin{lem}
 Suppose $\bm{\beta}$ satisfies \eqref{critical}.  Then there exists a constant $\kappa(\bm{\beta})$ such that whenever
  $ |v_1-v_2|>\kappa$, then \eqref{funct ineq 2} holds.
  %\kappa(\bm{\beta}), \mathcal{F}_{\bm{v}}$ admits a minimizer
 %on $\Gamma^{\bm{\beta}}.$
\end{lem}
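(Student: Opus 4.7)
The strategy is to exhibit an explicit trial pair $\bm{\rho}^* = (\rho_1^*, \rho_2^*) \in \Gamma^{\bm{\beta}}$ whose joint energy $\mathcal{F}_{\bm{v}}(\bm{\rho}^*)$ grows only logarithmically in $D := |v_1 - v_2|$, and to notice that the right-hand side of \eqref{funct ineq 2} grows quadratically in $D$. For $D$ large the desired strict inequality then follows. Since $\bm{\beta}$ satisfies \eqref{critical} and $n = 2$, we have $\Lambda_{\{i\}}(\bm{\beta}) = \beta_i(8\pi - a_{ii}\beta_i) > 0$ for $i \in \{1,2\}$, so each single-component problem is subcritical, and Theorem \ref{sub existence} applied to the scalar ($n=1$) functional
\[
F_i(\rho) := \int_{\rt} \rho \ln \rho + \frac{a_{ii}}{4\pi} \iint \rho(x)\ln|x-y|\rho(y)\,d^2x\,d^2y + \frac{1}{2}\int_{\rt} |x-v_i|^2 \rho\,d^2x
\]
produces a minimizer $\rho_i^* \in \Gamma^{\beta_i}$. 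Set $m_i := F_i(\rho_i^*)$. By translation invariance of $F_i$ in $v_i$, the constants $m_i$ and the centered moments $\int |x-v_i|^k \rho_i^* \,d^2x$ for $k=1,2$ are independent of $v_i$.

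\textbf{Energy estimate.} Since $a_{12} = a_{21}$, the joint energy decouples as
\[
\mathcal{F}_{\bm{v}}(\bm{\rho}^*) = m_1 + m_2 + \frac{a_{12}}{2\pi} \iint \rho_1^*(x)\ln|x-y|\rho_2^*(y)\,d^2x\,d^2y.
\]
Applying the triangle inequality $|x-y| \leq D + |x-v_1| + |y-v_2|$ and $\ln(1+t) \leq t$, together with the uniform first-moment bounds noted above, we obtain
\[
\iint \rho_1^*(x) \ln|x-y| \rho_2^*(y) \,d^2x\,d^2y \leq \beta_1\beta_2 \ln D + O(1/D),
\]
so that $\mathcal{F}_{\bm{v}}(\bm{\rho}^*) \leq m_1 + m_2 + \frac{a_{12}\beta_1\beta_2}{2\pi}\ln D + O(1/D)$ with implicit constants depending only on $\bm{\beta}$ and $A$.

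\textbf{Comparison and main obstacle.} By Theorem \ref{bounded from below}, $M_0 := \inf_{\Gamma^{\bm{\beta}}} \mathcal{F}_0 > -\infty$, and a direct minimization gives
\[
\min_{x_0 \in \rt} \sum_{i=1}^{2} \tfrac{\beta_i}{2}|x_0-v_i|^2 = \tfrac{\beta_1\beta_2}{2(\beta_1+\beta_2)} D^2;
\]
hence the right-hand side of \eqref{funct ineq 2} is bounded below by $M_0 + c D^2$ with $c := \tfrac{\beta_1\beta_2}{2(\beta_1+\beta_2)} > 0$. Since $D^2 \gg \ln D$, there exists $\kappa = \kappa(\bm{\beta}, A)$ beyond which this lower bound strictly exceeds the upper bound of the previous paragraph, establishing \eqref{funct ineq 2}. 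The only slightly delicate point is the uniformity-in-$D$ of the first-moment bounds on $\rho_i^*$, which is automatic from the translation invariance of $F_i$; once this is in hand the $\ln D$-vs-$D^2$ gap makes the comparison routine.
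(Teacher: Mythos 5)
Your proof is correct and follows essentially the same route as the paper: both compare a two-bump test configuration centered at $v_1$ and $v_2$, whose interaction energy grows only like $\frac{a_{12}\beta_1\beta_2}{2\pi}\ln|v_1-v_2|$, against the quadratic growth $\frac{\beta_1\beta_2}{2(\beta_1+\beta_2)}|v_1-v_2|^2$ of the right-hand side, so that equality in \eqref{funct ineq} is impossible and \eqref{funct ineq 2} holds for $|v_1-v_2|$ large. The only (immaterial) difference is that the paper translates a fixed compactly supported profile to each $v_i$, whereas you use the minimizers of the decoupled single-component subcritical problems and therefore replace the paper's trivial support argument by the first-moment estimate $\ln|x-y|\le\ln D+\bigl(|x-v_1|+|y-v_2|\bigr)/D$.
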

\begin{proof}
Let $\bar\rho$ be any non-negative, bounded function of compact support (say $\bar\rho(x)=0$ if $|x|>1$) such that $\int_{\rt}\bar\rho=1$.
Define $\rho_i(x):= \beta_i\bar\rho(x-v_i)$
so that $\bm{\rho}\in \Gamma^{\bm{\beta}}$. 

%Choose $\rho_i,i=1,2$ two smooth, nonnegative function supported in $B(0,1)$ such that $\int_{\rt} \rho_i = \beta_i,$
 %and define $\tilde \rho_i(x) = \rho_i(x-v_i), x \in \rt.$ 
 Then we immediately see that
 \begin{align*}
  &\left|\int_{\rt}  \rho_i \ln  \rho_i\right| = O(1), \ \
  \left|\int_{\rt} \int_{\rt}  \rho_i(x) \ln |x-y|  \rho_i(y)\right| =O(1), \\
  &\int_{\rt} |x-v_i|^2 \rho_i = \int_{\rt} |x|^2\bar \rho = O(1),
 \end{align*}
for all $i=1,2,$ where $O(1)$ denotes a quantity independent of $v_i.$ Now
 \begin{align} \label{e1}
  \int_{\rt} \int_{\rt}  \rho_1(x) \ln |x-y| \rho_2(y)
  = \beta_1\beta_2\int_{\rt} \int_{\rt} \bar\rho(x) \ln |x-y+(v_1-v_2)| \bar \rho(y).
 \end{align}
One can easily estimate that
$|\ln |x-y + (v_1-v_2)| - \ln |v_1-v_2|| \leq \frac{2}{|v_1-v_2|-2},$ for all $x,y \in (0,1)$ provided $|v_1-v_2|>2$
(this condition on $|v_1-v_2|$ is unnecessary, because we can choose the support of $\bar\rho$ accordingly).
Since $\bar\rho$ has support in $B(0,1)$ we get
\begin{align} \label{e2}
 &\int_{\rt} \int_{\rt} \bar\rho(x) \ln |x-y+(v_1-v_2)|  \bar\rho(y)- \ln |v_1-v_2| \notag \\
& = \int_{\rt} \int_{\rt} \bar\rho(x) \left(\ln |x-y+(v_1-v_2)| -  \ln |v_1-v_2| \right)\bar\rho(y)
 = O(1).
\end{align}
Thus we obtain from \eqref{e1} and \eqref{e2},
\begin{align} \label{e3}
 \inf_{\bm{\rho} \in \Gamma^{\bm{\beta}}} \mathcal{F}_{\bm{v}}(\bm{\rho})
 \leq \mathcal{F}_{\bm{v}}(\bm{\tilde \rho}) = O(1) + \frac{a_{12}}{2\pi}\beta_1\beta_2 \ln |v_1-v_2|.
\end{align}
While the right hand side of \eqref{funct ineq} becomes
\begin{align} \label{e4}
 \inf_{\bm{\rho} \in \Gamma^{\bm{\beta}}} \mathcal{F}_0(\bm{\rho}) + \min_{x_0 \in \rt} \sum_{i=1}^n \frac{\beta_i}{2}
 |x_0 - v_i|^2
 = O(1) + \frac{\beta_1\beta_2}{2(\beta_1+\beta_2)} |v_1 - v_2|^2.
\end{align}
We see from \eqref{e3} and \eqref{e4} that the equality can not occur in \eqref{funct ineq} provided $|v_1 - v_2|$
is very large. Hence by Proposition \ref{alternatives}, there exists a minimizer of $\mathcal{F}_{\bm{v}}$
on $\Gamma^{\bm{\beta}}.$ This completes the proof of the lemma.
\end{proof}

\noindent
\textbf{Proof of Theorem \ref{cor}:}

\begin{proof}
The proof of $(a)$ and $(c)$ follows from Theorem \ref{main} $(b)$ and $(d)$ respectively. We only need to prove $(b).$
Since $A$ is invertible and all the $v_i$ are equal by translating and adding constants to the solution we can assume 
 $u_i, 1 \leq i \leq n$ satisfies
 \begin{align} \label{ne}
  \begin{cases}
   -\Delta u_i = e^{\sum_{j=1}^n a_{ij} u_j - \frac{1}{2}|x|^2}, \ \mbox{in} \ \rt, \\
   \int_{\rt} e^{\sum_{j=1}^n a_{ij} u_j - \frac{1}{2}|x|^2} = \beta_i.
  \end{cases}
\end{align}
Again using the invertibility and irreducibility of $A$ we get by \cite[Proposition $4.1$]{CSW} with 
$V_i(x) = e^{-\frac{|x|^2}{2}}$ that $u_i$ in \eqref{ne} are radially symmetric with respect to the origin.
By abuse of notation we still denote the radial function by $u_i(r), r=|x|.$
Then $u_i$ satisfies
\begin{align} \label{ne1}
 -\frac{1}{r}\frac{d}{dr} \left(r \frac{du_i}{dr}\right) = e^{\sum_{j=1}^n a_{ij}u_j(r) - \frac{r^2}{2}}, r \in (0, \infty).
\end{align}
Define 
\begin{align*}
 m_i(r) = 2\pi \int_0^r se^{\sum_{j=1}^n a_{ij}u_j(s) - \frac{s^2}{2}} ds = -2\pi r\frac{du_i}{dr}, \ r \in (0,\infty), i = 1,\cdots,n.
\end{align*}
Then $m_i$ satisfies
\begin{align} \label{asy1}
 \lim_{r \rightarrow 0+} m_i(r) = 0, \lim_{r \rightarrow \infty} m_i(r) = \beta_i, \ \mbox{and} \
 m_i \ \mbox{are non decreasing}.
 \end{align}
Furthermore, since $u_i$ has log decay at infinity i.e., $|u_i(r) + \frac{\beta_i}{2\pi} \ln r| = O(1)$ as 
$r \rightarrow \infty$ (see \cite[Proposition 3.1]{CSW}) we see that
\begin{align} \label{asy2}
 \lim_{r \rightarrow \infty} r^2m_i^{\prime}(r) = 0.
\end{align}
Now define $w_i(s) = m_i(e^s), s \in (-\infty, \infty)$ then it follows from 
\eqref{asy1}, \eqref{asy2} that $w_i$ is non decreasing and satisfies
\begin{align*}
\lim_{s \rightarrow -\infty} w_i(s) = 0, \lim_{s \rightarrow \infty} w_i(s) = \beta_i, 
\lim_{s\rightarrow -\infty} e^{-s}w_i^{\prime}(s) = 0, \int_{-\infty}^{\infty} e^sw_i^{\prime}(s)ds < \infty.
\end{align*}
Therefore using the equation \eqref{ne1} we see that $w_i$ satisfies
\begin{align} \label{ne2}
 w_i^{\prime \prime}(s) = w_i^{\prime}(s) \left[2 - \frac{1}{2\pi}\sum_{j=1}^n a_{ij}w_j(s) - e^s\right].
\end{align}
Summing over all $i$ we can rewrite \eqref{ne2} as
\begin{align} \label{ne3}
 \left(\sum_{i=1}^n w_i^{\prime}(s)\right)^{\prime} = 
 \left[2\sum_{i=1}^nw_i(s) - \frac{1}{4\pi}\sum_{i=1}^n\sum_{j=1}^n a_{ij}w_i(s)w_j(s)\right]^{\prime} -
 \sum_{i=1}^ne^sw_i^{\prime}(s).
\end{align}
Since $\lim_{s \rightarrow \infty}\sum_{i=1}^n w_i(s) = \sum_{i=1}^n \beta_i,w_i$ are non decreasing we can find a
sequence $s_m$ converging to $\infty$ such that $\sum_{i=1}^n w_i^{\prime}(s_m) \rightarrow 0$ as $m \rightarrow \infty.$
 Therefore integrating \eqref{ne3} from $-\infty$ to $s_m$ and letting $m \rightarrow \infty$ we obtain
 \begin{align*}
  2\sum_{i=1}^n \beta_i - \frac{1}{4\pi} \sum_{i=1}^n \sum_{j=1}^na_{ij}\beta_i\beta_j = \sum_{i=1}^n\int_{-\infty}^{\infty}
  e^sw_i^{\prime}(s) ds
\end{align*}
which implies $\Lambda_I(\bm{\beta})>0,$ contradicting our assumption. This completes the proof of the corollary.
 \end{proof}
 \label{Bibliography}

\bibliography{Quadratic_Potentialnew} % The references (bibliography) information are stored in the file named "Bibliography.bib"

\bibliographystyle{alpha} % Use the "unsrtnat" BibTeX style for formatting the Bibliography

\end{document}